\newcommand{\ifims}[2]{#1} 
\newcommand{\ifAMS}[2]{#1}   
\newcommand{\ifau}[3]{#1}  
\newcommand{\ifbook}[2]{#1}   
\date{}
\def\thetitle{Sharp deviation bounds for quadratic forms}
\def\thanksa
\def\theruntitle{sharp deviation bounds for quadratic forms}
\def\theabstract{
This note presents sharp inequalities for deviation probability of a general 
quadratic form of a random vector \( \xiv \) with finite exponential moments.
The obtained deviation bounds are similar to the case of a Gaussian random vector.
The results are stated under general conditions and do not suppose any special 
structure of the vector \( \xiv \).
The obtained bounds are exact (non-asymptotic), all constants are explicit and
the leading terms in the bounds are sharp.
}
\def\kwdp{60F10}
\def\kwds{62F10}
\def\thekeywords{quadratic forms, deviation bounds}
\def\thankstitle{}
\def\authora{Vladimir Spokoiny}
\def\runauthora{spokoiny, v.}
\def\addressa{
    Weierstrass-Institute, \\ Humboldt University Berlin, \\ Moscow Institute of
    Physics and Technology
    \\
    Mohrenstr. 39, 10117 Berlin, Germany,    \\
    }
\def\emaila{spokoiny@wias-berlin.de}
\def\affiliationa{Weierstrass-Institute and Humboldt University Berlin}
\renewcommand{\(}{$\,}
\renewcommand{\)}{\,$}
\def\nquad{\hspace{-1cm}}
\def\eqdef{\stackrel{\operatorname{def}}{=}}
\newcommand{\cc}[1]{\mathscr{#1}}
\newcommand{\bb}[1]{\boldsymbol{#1}}
\renewcommand{\bar}[1]{\overline{#1}}
\renewcommand{\hat}[1]{\widehat{#1}}
\renewcommand{\tilde}[1]{\widetilde{#1}}
\renewcommand{\Gamma}{\varGamma}
\renewcommand{\Pi}{\varPi}
\renewcommand{\Sigma}{\varSigma}
\renewcommand{\Delta}{\varDelta}
\renewcommand{\Lambda}{\varLambda}
\renewcommand{\Psi}{\varPsi}
\renewcommand{\Phi}{\varPhi}
\renewcommand{\Theta}{\varTheta}
\renewcommand{\Omega}{\varOmega}
\renewcommand{\Xi}{\varXi}
\renewcommand{\Upsilon}{\varUpsilon}
\def\Var{\operatorname{Var}}
\def\tr{\operatorname{tr}}
\def\R{I\!\!R}
\def\E{I\!\!E}
\def\P{I\!\!P}
\def\kappa{\varkappa}
\def\T{\top}
\def\diag{\operatorname{diag}}
\def\loc{\operatorname{loc}}
\def\ev{\bb{e}}
\def\fv{\bb{f}}
\def\uv{\bb{u}}
\def\vv{\bb{v}}
\def\wv{\bb{w}}
\def\Yv{\bb{Y}}
\def\etav{\bb{\eta}}
\def\gammav{\bb{\gamma}}
\def\varepsilonv{\bb{\varepsilon}}
\def\upsilonv{\bb{\upsilon}}
\def\xiv{\bb{\xi}}
\def\zetav{\bb{\zeta}}
\def\CONST{\mathtt{C}}
\renewenvironment{abstract}
    {\centerline{\textbf{Abstract}}\bigskip
      \begin{center}
       \begin{minipage}{11cm}
        \begin{small}
    }
    {   \end{small}
       \end{minipage}
      \end{center}
     \bigskip
    }
\numberwithin{equation}{section}
\numberwithin{figure}{section}
\newcounter{example}[section]
\numberwithin{example}{section}
\newcounter{remark}[section]
\numberwithin{remark}{section}
\newtheorem{theorem}{Theorem}[section]
\newtheorem{lemma}[theorem]{Lemma}
\newtheorem{corollary}[theorem]{Corollary}
\newtheorem{exmp}[example]{Example}
\newtheorem{rmrk}[remark]{Remark}
\newenvironment{example}{\begin{exmp}\rm}{\end{exmp}}
\newenvironment{remark}{\begin{rmrk}\rm}{\end{rmrk}}
\begin{document}
\thispagestyle{empty}
\ifims{
\title{\thetitle}
\ifau{ 
  \author{
    \authora
    \ifdef{\thanksa}{\thanks{\thanksa}}{}
    \\[5.pt]
    \addressa \\
    \texttt{ \emaila}
  }
}
{  
  \author{
    \authora
    \ifdef{\thanksa}{\thanks{\thanksa}}{}
    \\[5.pt]
    \addressa \\
    \texttt{ \emaila}
    \and
    \authorb
    \ifdef{\thanksb}{\thanks{\thanksb}}{}
    \\[5.pt]
    \addressb \\
    \texttt{ \emailb}
  }
}
{   
  \author{
    \authora
    \ifdef{\thanksa}{\thanks{\thanksa}}{}
    \\[5.pt]
    \addressa \\
    \texttt{ \emaila}
    \and
    \authorb
    \ifdef{\thanksb}{\thanks{\thanksb}}{}
    \\[5.pt]
    \addressb \\
    \texttt{ \emailb}
    \and
    \authorc
    \ifdef{\thanksc}{\thanks{\thanksc}}{}
    \\[5.pt]
    \addressc \\
    \texttt{ \emailc}
  }
}

\maketitle
\pagestyle{myheadings}
\markboth
 {\hfill \textsc{ \small \theruntitle} \hfill}
 {\hfill
 \textsc{ \small
 \ifau{\runauthora}
      {\runauthora \, and \runauthorb}
      {\runauthora, \runauthorb, and \runauthorc}
 }
 \hfill}
\begin{abstract}
\theabstract
\end{abstract}

\ifAMS
    {\par\noindent\emph{AMS 2000 Subject Classification:} Primary \kwdp. Secondary \kwds}
    {\par\noindent\emph{JEL codes}: \kwdp}

\par\noindent\emph{Keywords}: \thekeywords
} 
{ 
\begin{frontmatter}
\title{\thetitle\protect\thanksref{T1}}
\thankstext{T1}{\thankstitle}


\runtitle{\theruntitle}

\begin{aug}
\author{\authora\ead[label=e1]{\emaila}}
\address{\addressa \\
 \printead{e1}}
 \end{aug}

 \runauthor{\runauthora}
\affiliation{\affiliationa}

\begin{abstract}
\theabstract
\end{abstract}

\begin{keyword}[class=AMS]
\kwd[Primary ]{\kwdp}
\kwd[; secondary ]{\kwds}
\end{keyword}

\begin{keyword}
\kwd{\thekeywords}
\end{keyword}

\end{frontmatter}
} 

\def\ND{\cc{N}}
\def\Bernoulli{\mathrm{Bernoulli}}
\def\Vola{\mathrm{Vola}}
\def\Poisson{\mathrm{Poisson}}
\def\ag{\mathrm{ag}}
\def\glob{\operatorname{glob}}
\def\blk{\operatorname{block}}
\def\lin{\operatorname{lin}}
\def\cond{\, \big| \,}

\def\rdl{\epsilon}
\def\rd{\bb{\rdl}}
\def\rddelta{\delta}
\def\rdomega{\varrho}
\def\rddeltab{\rddelta^{*}}
\def\rhorb{\rhor^{*}}

\def\wv{\bb{w}}
\def\varthetav{\bb{\vartheta}}
\def\Lr{\breve{L}}
\def\zetavr{\breve{\zetav}}
\def\etavr{\breve{\etav}}
\def\xivr{\breve{\xiv}}

\def\rdb{\rd}
\def\rdm{\underline{\rdb}}

\def\taub{\tau_{\rdb}}
\def\taum{\tau_{\rdm}}
\def\kappab{\kappa_{\rd}}
\def\deltab{\delta_{\rd}}

\def\taubGP{\tau_{\rdb,\GP}}
\def\taumGP{\tau_{\rdm,\GP}}
\def\kappabGP{\kappa_{\rd,\GP}}
\def\deltabGP{\delta_{\rd,\GP}}

\def\rG{\rd,\GP}

\def\LinSp{\mathrm{L}}
\def\Id{I\!\!\!I}
\def\Ind{\operatorname{1}\hspace{-4.3pt}\operatorname{I}}

\def\BG{\mathcal{R}}
\def\bg{r}
\def\fmup{\phi}
\def\rg{r}
\def\uc{u_{c}}
\def\muc{\mu_{c}}
\def\mud{\mu_{0}}
\def\xxd{\xx_{0}}
\def\yyd{\yy_{0}}
\def\gmd{\gm_{0}}

\def\ms{m^{*}}
\def\Inv{A}
\def\InvT{\Inv^{\T}}
\def\Invt{\tilde{\Inv}}

\def\ssize{N}
\def\nsize{{n}}

\def\rhor{\omega}

\def\LT{L}
\def\LGP{\LT_{\GP}}
\def\La{\mathbb{L}}
\def\Lab{\La_{\rdb}}
\def\Lam{\La_{\rdm}}

\def\DP{D}
\def\DPc{\DP_{0}}
\def\DPb{\DP_{\rdb}}
\def\DPm{\DP_{\rdm}}

\def\LabGP{\La_{\rdb,\GP}}
\def\LamGP{\La_{\rdm,\GP}}

\def\DPbGP{\DP_{\rdb,\GP}}
\def\DPmGP{\DP_{\rdm,\GP}}
\def\riskbGP{\riskt_{\rdb,\GP}}

\def\gmi{\mathtt{b}}
\def\gmiid{\mathtt{g}_{1}}
\def\kullbi{\Bbbk}
\def\Thetasi{\Theta_{\loc}}
\def\rri{\mathtt{u}}
\def\rris{\rri_{0}}

\def\Ipc{\bb{\mathrm{f}}}
\def\IF{\Bbb{F}}
\def\IFc{\IF_{0}}
\def\IFb{\IF_{\rdb}}
\def\IFm{\IF_{\rdm}}

\def\DF{\cc{D}}
\def\DFc{\DF_{0}}
\def\DFb{\DF_{\rdb}}
\def\DFm{\breve{\DF}_{\rd}}
\def\DFm{\DF_{\rdm}}

\def\DPr{\breve{\DP}}
\def\VF{\cc{V}}
\def\VFc{\VF_{0}}

\def\HHc{\HH_{0}}
\def\HHb{\HH_{\rd}}
\def\HHm{\HH_{\rdm}}

\def\xib{\xi^{*}}
\def\xivb{\xiv_{\rdb}}
\def\xivm{\xiv_{\rdm}}
\def\CAm{\underline{\CA}}
\def\CAb{\CA}

\def\penr{\operatorname{pen}}
\def\pen{\mathfrak{t}}
\def\PEN{\operatorname{PEN}}
\def\RSS{\operatorname{RSS}}
\def\med{\operatorname{med}}

\def\ex{\mathrm{e}}
\def\entrl{\mathbb{Q}}
\def\entrlb{\entrl}
\def\entr{\entrl}

\def\kullb{\cc{K}} 
\def\kullbc{\kullb^{c}}

\def\gm{\mathtt{g}}
\def\gmc{\gm_{c}}
\def\gmb{\gm}
\def\gmbm{\gmb_{1}}

\def\yy{\mathtt{y}}
\def\yyc{\yy_{c}}
\def\xx{\mathtt{x}}
\def\xxc{\xx_{c}}
\def\tc{t_{c}}

\def\alp{\alpha}
\def\alpn{\rho}
\def\gmu{\mathfrak{a}}

\def\losst{\varrho}
\def\loss{\wp}
\def\lossp{u}
\def\closs{g}

\def\riskt{\cc{R}}
\def\emprisk{\ell}
\def\bias{b}
\def\bern{q}

\def\TT{\nsize}

\def\Pone{P}
\def\Pf{\P_{f(\cdot)}}
\def\Ef{\E_{f(\cdot)}}
\def\Ps{\P_{\thetas}}
\def\Es{\E_{\thetas}}
\def\Pu{\P_{\upsilons}}
\def\Eu{\E_{\upsilons}}

\def\Pvs{\P_{\thetavs}}
\def\Evs{\E_{\thetavs}}

\def\UPd{w}
\def\nunup{\nu_{1}}
\def\rru{\rr_{1}}
\def\rups{\rr_{0}}
\def\rupsb{\rups^{*}}
\def\rrf{\rr^{\flat}}

\def\smooths{\mathbb{S}}
\def\smooth{\smooths_{1}}

\def\elli{\bar{\ell}}

\def\K{K}

\def\Psir{\breve{\Psi}}

\def\af{a}
\def\afs{\af^{*}}

\def\kapla{\varkappa}

\newcommand{\mlew}[1]{\tilde{\thetav}_{#1}}
\newcommand{\mlea}[1]{\hat{\thetav}_{#1}}
\newcommand{\mluw}[1]{\tilde{\theta}_{#1}}
\newcommand{\mlua}[1]{\hat{\theta}_{#1}}
\newcommand{\penm}[1]{\boldsymbol{m}_{#1}}

\def\Pdom{\mu_{0}}
\def\PDOM{\bb{\mu}_{0}}
\def\EDOM{\E_{0}}

\def\mk{m}
\def\Mk{\cc{M}}
\def\SV{\cc{S}}

\def\Cs{E}
\def\Csd{\Cs^{\circ}}
\def\Ca{A}
\def\CS{\cc{E}}
\def\CA{\cc{A}}
\def\CAb{\CA_{\rd}}
\def\CAC{\CA_{\CoFu}}

\def\Ccb{m_{\rdb}}
\def\Ccm{m_{\rdm}}
\def\CcbGP{m_{\rdb,\GP}}
\def\CcmGP{m_{\rdm,\GP}}

\def\etas{\eta^{*}}

\def\omegav{\bb{\phi}}
\def\omegavs{\omegav^{*}}
\def\omegavc{\omegav'}

\def\nuvs{\nuv^{*}}
\def\nuvc{\nuv'}

\def\nunu{\nu_{0}}
\def\numu{\nu_{1}}
\def\nupi{\nu^{+}}
\def\nubu{\beta}

\def\nus{\nu}
\def\nusb{\nus}
\def\nusr{\nus^{\bracketing}}
\def\Nusb{\mathbb{N}}
\def\Nusr{\mathbb{N}^{\diamond}}

\def\dist{d}
\def\distd{\mathfrak{a}}

\def\hatk{\kappa}
\def\ko{k^{\circ}}

\def\qqq{\mathfrak{q}}
\def\ppp{{s}}
\def\Cqq{C(\qqq)}
\def\Cqqb{C^{\diamond}(\qqq)}
\def\Crho{C(\mrho)}
\def\Cqqm{\log(4)}
\def\Cqpr{(\qqq \rrp + \dimp / 2)}

\def\Cdima{\mathfrak{e}_{0}}
\def\Cdimb{\mathfrak{e}_{1}}
\def\cdima{\mathfrak{c}_{0}}
\def\cdimb{\mathfrak{c}_{1}}
\def\cdim{\mathfrak{c}}

\def\rdomega{\varrho}
\def\deltaD{\delta}
\def\alphai{\alpha_{1}}
\def\alphaii{\alpha_{2}}
\def\alphaiii{\alpha_{3}}
\def\alphaiv{\alpha_{4}}

\def\err{\diamondsuit}
\def\errbm{\bar{\err}_{\rdomega}}
\def\errm{\err_{\rdm}}
\def\errb{\err_{\rdb}}

\def\errbGP{\err_{\rdomega,\GP}}
\def\errmGP{\err_{\rdm,\GP}}
\def\errbmGP{\bar{\err}_{\rd,\GP}}

\def\errs{\err_{\rdomega}^{*}}
\def\deltas{\alpha}

\def\xivbGP{\xiv_{\rdb,\GP}}
\def\xivmGP{\xiv_{\rdm,\GP}}

\def\SP{S}
\def\GP{G}
\def\GPt{\GP_{0}}
\def\GPn{\GP_{1}}
\def\gp{g}
\def\gs{s}

\def\errbGP{\err_{\rdb,\GP}}
\def\errmGP{\err_{\rdm,\GP}}
\def\errpmGP{\err_{\GP}^{\pm}}

\def\LCS{\cc{C}}

\def\DPGP{\DP_{\GP}}
\def\thetavsGP{\thetavs_{\GP}}

\def\LL{\cc{L}}
\def\LLb{\LL^{*}}
\def\LLh{\cc{L}}

\def\YY{\cc{Y}}
\def\LP{L^{\circ}}

\def\modcnrd{\mathfrak{A}}

\def\pens{\pi}
\def\pnn{\mathfrak{g}}
\def\pnnd{\mathfrak{u}}
\def\pnndGP{\pnnd_{\GP}}

\def\confpr{\mathfrak{c}}
\def\confprb{\confpr^{*}}

\def\pn{\pens^{*}}
\def\penInt{\mathfrak{D}}
\def\penH{\mathbb{H}}
\def\pmu{\mathfrak{u}}
\def\Closs{\cc{R}}

\def\dimp{p}
\def\riskb{\riskt_{\rdb}}
\def\dimpp{\dimp+1}
\def\BB{I\!\!B}
\def\vA{\mathtt{v}}

\def\deficiency{\Delta}
\def\spread{\Delta}
\def\dimtotal{\dimp^{*}}

\def\thetav{\bb{\theta}}
\def\thetavs{\thetav^{*}}
\def\thetavc{\thetav'}
\def\thetavd{\thetav^{\circ}}
\def\thetavdc{\thetav^{\sharp}}
\def\dthetavs{\thetav,\thetavs}

\def\thetas{\theta^{*}}
\def\thetac{\theta'}
\def\thetad{\theta^{\circ}}
\def\thetab{\theta^{\dag}}
\def\thetavb{\thetav^{\dag}}

\def\vtheta{\vartheta}
\def\vthetav{\bb{\vtheta}}
\def\prior{\Pi}

\def\Gam{\Xi}
\def\Gam{\mathcal{S}}
\def\RG{R}
\def\Psu{\Upsilon}
\def\Phim{\breve{\Phi}}

\def\Proj{P}

\def\gammavs{\gammav^{*}}
\def\gammavd{\gammav^{\circ}}
\def\etavs{\etav^{*}}
\def\etavd{\etav^{\circ}}
\def\etavc{\etav'}

\def\taus{\tau_{0}}
\def\taup{\lceil \tau \rceil}

\def\sigmas{{\sigma^{*}}}
\def\Sigmas{\Sigma_{0}}

\def\upsilonc{\upsilon'}
\def\upsilond{\upsilon^{\circ}}
\def\upsilonp{{\upsilon}^{*}}
\def\upsilonm{{\upsilon}_{*}}
\def\upsilonvs{\upsilonv^{*}}
\def\upsilons{\upsilon^{*}}
\def\upsilonb{\bar{\upsilon}}
\def\upsilonvd{\upsilonv^{\circ}}

\def\ups{\bb{\upsilon}}
\def\upss{\ups_{0}}
\def\upsc{\ups^{\prime}}
\def\upsd{\ups^{\circ}}
\def\upsdc{\ups^{\sharp}}
\def\upsdu{\ups^{\flat}}

\def\Ups{\varUpsilon}
\def\Upsd{\Ups^{\circ}}
\def\Upss{\Ups_{\circ}}
\def\UpsP{\Ups^{c}}

\def\Thetas{\Theta_{0}}
\def\ThetasGP{\Theta_{0,\GP}}
\def\varthetav{\bb{\vartheta}}

\def\glink{g}

\def\fvs{\fv}
\def\fs{f}
\def\fb{\fv^{\dag}}

\def\uc{\uv'}
\def\ud{\uv^{\circ}}
\def\uvs{\uv^{*}}
\def\us{u^{*}}

\def\reps{\epsilon}
\def\eps{\epsilon}

\def\repsc{\reps_{0}}
\def\repsb{\reps^{*}}
\def\repsg{g}

\def\lu{\delta}
\def\lub{\bar{\lu}}

\def\Uu{U}
\def\UU{\cc{Y}}
\def\UUM{\cc{M}}
\def\UP{\cc{U}}
\def\up{\mathfrak{u}}

\def\VP{V}
\def\VPc{\VP_{0}}
\def\VPV{\cc{U}}
\def\VPVc{\cc{\VPV}_{0}}
\def\VPGP{\VP_{\GP}}
\def\VPSP{\VP_{\SP}}

\def\VV{H}
\def\GV{\cc{G}}
\def\GVS{S}

\def\VVb{\VV^{*}}
\def\VVc{\VV_{0}}
\def\vv{\bb{h}}
\def\vva{g}
\def\vp{\mathbf{v}}
\def\vpc{\vp_{0}}
\def\VVca{\VV}
\def\Vtt{H}

\def\DG{D}

\def\Vn{V_{0}}
\def\vn{v_{0}}

\def\norm{\mathfrak{c}}
\def\normc{\delta}
\def\norma{c}

\def\egridd{\cc{E}_{\delta}}
\def\penb{\varkappa}

\def\dotzeta{\dot{\zeta}}
\def\mes{\pi}
\def\mesl{\Lambda}
\def\cprr{F}

\def\lambdam{\gm_{1}}
\def\lambdaB{{\lambda}^{*}}
\def\lambdac{{\lambda'}}

\def\cla{{b}}
\def\fis{\mathfrak{a}}
\def\fiss{\fis_{1}}

\def\Vd{{V}}
\def\vd{\bar{v}}

\def\klim{k^{\circ}}
\def\midm{\mid \!}

\def\Ldrift{M}
\def\ldrift{m}
\def\mY{b}
\def\Lvar{D}
\def\lvar{\sigma}

\def\Mubcu{\Upsilon}
\def\Dthetav{\bb{u}}

\def\B{\cc{B}}
\def\BD{\B^{\circ}}
\def\BU{B}
\def\BI{\B^{*}}

\def\mub{\mu^{*}}
\def\mubc{\mu}
\def\mubcb{\mubc^{*}}
\def\Mubc{\mathbb{M}}
\def\Mubcb{\mathrm{M}}

\def\zzc{\zz_{c}}
\def\ww{w}
\def\wwc{\ww_{c}}

\def\norms{\circ} 
\def\rs{\rr_{\norms}}
\def\yys{\yy_{\norms}}
\def\xxs{\xx_{\norms}}
\def\zzs{\zz_{\norms}}
\def\uu{\mathtt{u}}
\def\uus{\uu_{\norms}}
\def\mus{\mu_{\norms}}
\def\gms{\gm_{\norms}}
\def\wws{\ww_{\circ}}

\def\srho{s}
\def\mrho{\varrho}

\def\Lmgf{\mathfrak{M}}
\def\Lmgfb{\Lmgf^{*}}

\def\lmgf{\mathfrak{m}}
\def\lmgfb{\lmgf^{*}}

\def\Expzeta{\mathfrak{N}}
\def\expzeta{\mathfrak{s}}

\def\rr{\mathtt{r}}
\def\rrb{\rr^{*}}
\def\rru{\rr_{\circ}}
\def\rrc{\rr'}
\def\rs{r_{*}}

\def\zz{\mathfrak{z}}
\def\zzb{\tilde{\zz}}
\def\tt{\mathfrak{t}}
\def\zb{z_{\rd}}
\def\zzg{\zz_{1}}
\def\zzQ{\zz_{0}}
\def\zzq{\zz}

\def\Cr{\mathfrak{c}}
\def\Crp{\mathfrak{C}}
\def\Crl{\mathfrak{r}}
\def\Crlp{\mathfrak{R}}
\def\Crlq{\cc{T}}
\def\Crlmu{\cc{M}}

\def\zetah{\zeta_{h}}
\def\GG{G}
\def\HH{H}
\def\pG{p}
\def\pH{q}
\def\hh{H^{*}}

\def\mubch{\mubc_{1}}
\def\rhoh{\rho_{1}}
\def\CoFuh{\CoFu_{1}}
\def\dimh{p_{1}}
\def\VPh{\VP_{1}}
\def\VPt{\VP_{0}}

\def\LLh{L_{1}}
\def\pnndh{\pnnd_{1}}

\def\LCS{C}
\def\Ac{A_{0}}
\def\Ab{A_{\rd}}
\def\DPrb{\DPr_{\rdb}}
\def\DPrm{\DPr_{\rdm}}
\def\Cb{\cc{C}_{\rdb}}
\def\Ub{\cc{U}_{\rdb}}
\def\zetavrb{\zetavr_{\rd}}
\def\xivrb{\breve{\xiv}_{\rd}}
\def\VPrb{\breve{\VP}_{\rdb}}
\def\Larb{\breve{\La}_{\rdb}}
\def\Larm{\breve{\La}_{\rdm}}

\def\deltav{\bb{\delta}}

\def\score{\nabla}
\def\scorer{\breve{\nabla}}

\def\LCS{C}
\def\Ac{A_{0}}
\def\Bc{B_{0}}
\def\AF{A}
\def\Ab{A_{\rdb}}
\def\Am{A_{\rdm}}
\def\DPrc{\DPr_{0}}
\def\DPrb{\DPr_{\rdb}}
\def\DPrm{\DPr_{\rdm}}
\def\Cb{\cc{C}_{\rdb}}
\def\Cm{\cc{C}_{\rdm}}
\def\Ub{\cc{U}_{\rdb}}
\def\deltav{\bb{\delta}}
\def\nuv{\bb{\nu}}
\def\xivrb{\breve{\xiv}_{\rd}}
\def\VPrb{\breve{\VP}_{\rdb}}
\def\Larb{\breve{\La}_{\rdb}}
\def\Lar{\breve{\La}}
\def\Larm{\breve{\La}_{\rdm}}
\def\VH{Q}
\def\VHc{\VH_{0}}
\def\zetavrm{\zetavr_{\rdm}}
\def\N{\mathbb{N}}

\def\Span{\operatorname{span}}
\def\Exc{{\square}}
\def\UUs{U_{\circ}}
\def\errbm{\errb^{*}}
\def\corrDF{\nu}
\def\BBr{\breve{\BB}}
\def\taua{\tau}
\def\AssId{\mathcal{I}}
\def\assId{\iota}
\def\AFD{\cc{A}}

\def\BanX{\cc{X}}
\def\basX{\ev}
\def\apprX{\alpha}
\def\fvs{\fv^{*}}
\def\lkh{\ell}
\def\Bc{B_{0}}
\def\dimn{\dimp_{\nsize}}
\def\betan{\beta_{\nsize}}


\def\xivGP{\xiv_{\GP}}
\def\dimA{\mathtt{p}}
\def\dimAGP{\dimA}
\def\dime{\dimA_{e}}
\def\dimG{\dimA_{\GP}}
\def\dimS{\dimA_{s}}
\def\nubm{\nu_{\rd}}
\def\uub{u_{\rd}}
\def\uubGP{u_{\rd,\GP}}

\def\priorden{\pi}
\def\xivGP{\xiv_{\GP}}
\def\dimAGP{\dimA}
\def\nubm{\nu_{\rd}}
\def\uub{u_{\rd}}
\def\uubGP{u_{\rd,\GP}}

\def\CR{\mathcal{C}}
\def\CRb{\CR_{\rdb}}
\def\vthetavb{\bar{\vthetav}}
\def\Covpost{\mathfrak{S}}

\def\Db{\DP_{+}}
\def\Dm{\DP_{-}}
\def\uvb{\uv_{+}}
\def\uvm{\uv_{-}}
\def\uud{\omega}
\def\taub{\delta}
\def\Lip{L}
\def\Xb{X_{+}}
\def\Xm{X_{-}}
\def\deltam{\delta_{-}}
\def\betauv{\delta}
\def\betab{\betauv_{1}}
\def\betaf{\betauv_{2}}
\def\upsv{\bb{\varkappa}}
\def\upsvb{\bar{\upsv}}
\def\rhob{\varrho}
\def\alpb{\alp_{1}}
\def\betap{\betauv_{3}}
\def\Ec{\E^{\circ}}
\def\ff{f}
\def\fpos{g}
\def\fneg{h}
\def\alpb{\alp_{+}}
\def\alpm{\alp_{-}}

\def\kappak{\kappa}
\def\kappas{\kappak^{*}}
\def\Kappak{\cc{K}}
\def\DPk{\DP_{\kappak}}
\def\VPk{\VP_{\kappak}}

\def\ts{s}
\def\tsv{\bb{\ts}}
\def\mm{\kappa}
\def\mmc{\mm'}
\def\mmd{\mm^{\circ}}
\def\mmo{\mm^{*}}
\def\mmmmo{\mm,\mmo}
\def\mmt{\tilde{\mm}}
\def\mma{\hat{\mm}}
\def\pp{z}

\def\LLL{L_{1}}
\def\LLr{L_{0}}
\def\muL{\mu_{1}}
\def\mur{\mu_{0}}

\def\LmgfL{\Lmgf_{1}}
\def\Lmgfr{\Lmgf_{0}}
\def\Lmgfm{\Lmgf_{1}}

\def\Kappa{\cc{K}}
\def\CoFu{\cc{C}}
\def\CoFuc{\CoFu_{0}}
\def\CoFub{\CoFu^{*}}
\def\CoFuL{\CoFu_{1}}
\def\CoFur{\CoFu_{0}}
\def\CAL{\CA_{1}}
\def\CAr{\CA_{0}}
\def\CAzz{\cc{A}}

\def\pnnL{\pnn_{1}}
\def\pnnr{\pnn_{0}}
\def\ttd{\delta}
\def\alphaL{\alpha_{1}}
\def\alphar{\alpha_{0}}
\def\alpharL{\alpha}
\def\rat{\mathfrak{t}}
\def\mquad{\nquad}
\def\zzL{\zz_{1}}
\def\zzr{\zz_{0}}

\def\mmset{\mathcal{I}}
\def\xex{u}
\def\dcm{q}
\def\dc{g}
\def\dcL{\dc_{1}}
\def\dcr{\dc_{0}}
\def\kk{k}

\def\cpen{\tau}

\def\dens{f}
\def\jj{j}
\def\JJ{\cc{J}}
\def\Zphi{Z}
\def\Zphiv{\bb{\Zphi}}

\def\nuu{\mathfrak{u}}
\def\nud{\mathfrak{u}_{0}}
\def\nun{c_{\nuu}}
\def\rhork{\kullb}
\def\GH{\mbox{GH}}
\def\HYP{\mbox{HYP}}
\def\NIG{\mbox{NIG}}
\def\IR{{\rm I\!R}}
\def\taggr{b}
\def\penm{\boldsymbol{m}}
\def\Crlp{\cc{R}}


\section{Introduction}
\label{Chgqform}
\label{Sprobabquad}

This paper presents a number of deviation probability bounds for a quadratic form
\( \| \xiv \|^{2} \) or more generally \( \| \BB \xiv \|^{2} \) of a random 
\( \dimp \) vector \( \xiv \) satisfying a general exponential moment condition.
Such quadratic forms arise in many problems. 
We mainly focus on statistical applications such that hypothesis testing for linear 
models or linear model selection. 
We refer to \cite{massart2003} for an extensive overview and numerous results on 
probability bounds and their applications in statistical model selection. 
Limit theorems for quadratic forms can be found e.g. in \cite{GoTi1999} and 
\cite{HoSh1999}. 
Some concentration bounds for U-statistics are available in 
\cite{Bret1999},
\cite{Gine2000},
\cite{HouRe2003}.
We also refer to \cite{Ba2010} for a number of statistical problems relying on such 
deviation bounds.

If \( \xiv \) is standard normal then \( \| \xiv \|^{2} \) is chi-squared with 
\( \dimp \) degrees of freedom. 
We aim to extend this behavior to the case of a general vector \( \xiv \)
satisfying the following exponential moment condition:
\begin{EQA}[c]
    \log \E \exp\bigl( \gammav^{\T} \xiv \bigr) 
    \le 
    \| \gammav \|^{2}/2,
    \qquad 
    \gammav \in \R^{\dimp}, \, \| \gammav \| \le \gm .
\label{expgamgm}
\end{EQA}
Here \( \gm \) is a positive constant which appears 
to be very important in our results.
Namely, it determines the frontier between the Gaussian and non-Gaussian type 
deviation bounds. 
Our first result shows that under \eqref{expgamgm} the deviation bounds for the 
quadratic form \( \| \xiv \|^{2} \) are essentially the same as in the Gaussian 
case, if the value \( \gm^{2} \) exceed \( \CONST \dimp \) for a fixed constant 
\( \CONST \).
Further we extend the result to the case of a more general form 
\( \| \BB \xiv \|^{2} \).
An important advantage of the approach of this paper which differs it from all the 
previous studies is that there is no any 
additional conditions on the structure or origin of the vector \( \xiv \).
For instance, we do not assume that \( \xiv \) is a sum of independent or weakly 
dependent random variables, or components of \( \xiv \) are independent.
The results are exact stated in a non-asymptotic fashion, all the constants are 
explicit and the leading terms are sharp.

As a motivating example, we consider a linear regression model 
\( \Yv = \Psi^{\T} \thetav + \varepsilonv \) 
in which the error vector \( \varepsilon \) is zero mean.
The ordinary least square estimator \( \tilde{\thetav} \) for the parameter vector 
\( \thetav \) reads as 
\begin{EQA}[c]
    \tilde{\thetav}
    =
    \bigl( \Psi \Psi^{\T} \bigr)^{-1} \Psi \Yv
\label{ttPsPsYv}
\end{EQA}    
and it can be viewed as the maximum likelihood estimator in a Gaussian linear model 
with a diagonal covariance matrix, that is, 
\( \Yv \sim \ND(\Psi^{\T} \thetav, \sigma^{2} \Id_{\nsize}) \). 
Define the \( \dimp \times \dimp \) matrix
\begin{EQA}[c]
    \DPc^{2}
    \eqdef
    \Psi \Psi^{\T},
\label{DPcVPsqf}
\end{EQA}    
Then
\begin{EQA}[c]
    \DPc (\tilde{\thetav} - \thetavs)
    =
    \DPc^{-1} \zetav
\label{DPcttsqf}
\end{EQA}    
with \( \zetav \eqdef \Psi \Yv \). 
The likelihood ratio test statistic for this problem is exactly 
\( \| \DPc^{-1} \zetav \|^{2}/2 \).
Similarly, the model selection procedure is based on comparing such quadratic forms 
for different matrices \( \DPc \); see e.g. \cite{Ba2010}.

Now we indicate how this situation can be reduced to a bound for a vector \( \xiv \) 
satisfying the condition \eqref{expgamgm}.
Suppose for simplicity that the errors \( \varepsilon_{i} \) are independent and 
have exponential moments. 

\begin{description}
\item[\( \bb{(e_{1})} \)] 
    \emph{ There exist some constants \( \nunu \) and \( \gmiid > 0 \), 
    and for every \( i \)
    a constant \( \expzeta_{i} \) such that 
    \( \E \bigl( \varepsilon_{i}/\expzeta_{i} \bigr)^{2} \le 1 \) and
    } 
\begin{EQA}[c]
    \log \E \exp\bigl(  {\lambda \varepsilon_{i}}/{\expzeta_{i}} \bigr)
    \le 
    \nunu^{2} \lambda^{2} / 2,
    \qquad 
    |\lambda| \le \gmiid .
\label{expzetanunu}
\end{EQA}
\end{description}

Here \( \gmiid \) is a fixed positive constant.
One can show that if this condition is fulfilled for some \( \gmiid > 0 \) and 
a constant \( \nunu \ge 1 \), then one can get a similar condition with 
\( \nunu \) arbitrary close to one and \( \gmiid \) slightly decreased. 
A natural candidate for \( \expzeta_{i} \) is \( \sigma_{i} \) where 
\( \sigma_{i}^{2} = \E \varepsilon_{i}^{2} \) is the variance of 
\( \varepsilon_{i} \).
Under \eqref{expzetanunu}, introduce a \( \dimp \times \dimp \) matrix \( \VPc \) 
defined by 
\begin{EQA}[c]
    \VPc^{2} 
    \eqdef 
    \sum \expzeta_{i}^{2} \Psi_{i} \Psi_{i}^{\T} .
\label{VPlinregr}
\end{EQA}    
Define also
\begin{EQA}
    \xiv
    &=&
    \VPc^{-1} \Psi \Yv,
    \\
    N^{-1/2}  
    & \eqdef &
    \max_{i} \sup_{\gammav \in \R^{p}} 
    \frac{\expzeta_{i} |\Psi_{i}^{\T} \gammav|}{\| \VPc \gammav \|} \, .
\label{CPsiexp}
\end{EQA}
Simple calculation shows that for \( \| \gammav \| \le \gmb = \gmiid N^{1/2} \)
\begin{EQA}[c]
    \log \E \exp\bigl( \gammav^{\T} \xiv \bigr) 
    \le 
    \nunu^{2} \| \gammav \|^{2}/2,
    \qquad 
    \gammav \in \R^{\dimp}, \, \| \gammav \| \le \gm .
\label{expgamgm0ex}
\end{EQA}
We conclude that \eqref{expgamgm} is nearly fulfilled under \( (e_{1}) \) and 
moreover, the value \( \gm^{2} \) is proportional to the effective sample size 
\( N \).
The results of the paper allow to get a nearly \( \chi^{2} \)-behavior of the test 
statistic \( \| \xiv \|^{2} \) which is a finite sample version of the famous Wilks 
phenomenon; see e.g. \cite{FaZh2001,FaHu2005}, \cite{BoMa2011}. 

\medskip
The paper is organized as follows.
Section~\ref{SGaussqf} reminds the classical results about deviation probability of 
a Gaussian quadratic form. 
These results are presented only for comparison and to make the paper selfcontained. 

Section~\ref{SchiquadE}
studies the probability of the form \( \P\bigl( \| \xiv \| > \yy \bigr) \) 
under the condition 
\begin{EQA}[c]
    \log \E \exp\bigl( \gammav^{\T} \xiv \bigr) 
    \le 
    \nunu^{2} \| \gammav \|^{2}/2,
    \qquad 
    \gammav \in \R^{\dimp}, \,\, \| \gammav \| \le \gm .
\label{expgamgm0}
\end{EQA}
The general case can be reduced to \( \nunu = 1 \) by rescaling 
\( \xiv \) and \( \gm \):
\begin{EQA}[c]
    \log \E \exp\bigl( \gammav^{\T} \xiv / \nunu \bigr) 
    \le 
    \| \gammav \|^{2}/2,
    \qquad 
    \gammav \in \R^{\dimp}, \,\, \| \gammav \| \le \nunu \gm 
\label{expgamgmm}
\end{EQA}
that is, \( \nunu^{-1} \xiv \) fulfills \eqref{expgamgm} with a slightly increased
\( \gm \).

The result is extended to the case of a general quadratic form 
in Section~\ref{Sbqf}.
Some more extension motivated by different statistical problems are given in 
Section~\ref{Schi2norm} and Section~\ref{SBernstqf}.
All the proofs are collected in the Appendix.

\section{Gaussian case}
\label{SGaussqf}
Our benchmark will be a deviation bound for \( \| \xiv \|^{2} \) for a standard 
Gaussian vector \( \xiv \).
The ultimate goal is to show that under \eqref{expgamgm} 
the norm of the vector \( \xiv \) exhibits behavior  
expected for a Gaussian vector, at least in the region of moderate deviations. 
For the reason of comparison, we begin by stating the result for a Gaussian vector 
\( \xiv \).

\begin{theorem}
\label{TxivG}
Let \( \xiv \) be a standard normal vector in \( \R^{\dimp} \).
Then for any \( u > 0 \), it holds
\begin{EQA}
\label{logPmunuu}
    \P\bigl( \| \xiv \|^{2} > \dimp + u \bigr)
    & \le &
    \exp\bigl\{ - (\dimp/2) \fmup(u/\dimp) \bigr] \bigr\} 
\end{EQA}
with 
\begin{EQA}[c]
    \fmup(t)
    \eqdef
    t - \log(1+t) .
\label{fmupd}
\end{EQA}   
Let \( \fmup^{-1}(\cdot) \) stand for the inverse of \( \fmup(\cdot) \). 
For any \( \xx \), 
\begin{EQA}[c]
    \P\bigl( 
        \| \xiv \|^{2}
        > \dimp + \fmup^{-1}(2\xx/\dimp)
    \bigr)
    \le 
    \exp(- \xx) .
\label{Ptttlg0}
\end{EQA}
This particularly yields with \( \kappa = 6.6 \)
\begin{EQA}[c]
    \P\bigl( 
        \| \xiv \|^{2}
        > \dimp + \sqrt{\kappa \xx \dimp} \vee (\kappa \xx) 
    \bigr)
    \le 
    \exp(- \xx) .
\label{Ptttlg}
\end{EQA}    
\end{theorem}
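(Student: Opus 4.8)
The plan is to apply the exponential (Chernoff) method, which is available in closed form here because a standard Gaussian vector has \( \| \xiv \|^{2} = \sum_{i=1}^{\dimp} \xi_{i}^{2} \), a sum of independent \( \chi^{2}_{1} \) variables with an explicit Laplace transform. First I would record that for every \( \lambda < 1/2 \),
\[
    \E \exp\bigl( \lambda \| \xiv \|^{2} \bigr) = (1 - 2\lambda)^{-\dimp/2},
\]
since each coordinate contributes \( \E \exp(\lambda \xi_{i}^{2}) = (1-2\lambda)^{-1/2} \) and the coordinates are independent. Markov's inequality then gives, for any \( \lambda \in (0,1/2) \),
\[
    \P\bigl( \| \xiv \|^{2} > \dimp + u \bigr)
    \le \exp\bigl\{ -\lambda(\dimp + u) \bigr\} \, (1 - 2\lambda)^{-\dimp/2}.
\]

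The next step is to minimise the right-hand side over \( \lambda \). Differentiating the exponent \( -\lambda(\dimp + u) - (\dimp/2)\log(1 - 2\lambda) \) and equating to zero yields the minimiser \( \lambda_{*} = u / \{ 2(\dimp + u) \} \in (0,1/2) \). Substituting \( \lambda_{*} \) back, the exponent collapses to \( -u/2 + (\dimp/2)\log(1 + u/\dimp) = -(\dimp/2)\,\fmup(u/\dimp) \), which is exactly \eqref{logPmunuu}. The bound \eqref{Ptttlg0} is then pure inversion: setting \( (\dimp/2)\,\fmup(u/\dimp) = \xx \) gives \( u/\dimp = \fmup^{-1}(2\xx/\dimp) \), so the threshold is \( \dimp\bigl( 1 + \fmup^{-1}(2\xx/\dimp) \bigr) \) and the probability is at most \( \exp(-\xx) \).

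For the explicit form \eqref{Ptttlg} I would dominate \( \fmup^{-1} \) by elementary functions in two regimes matched at the crossover of the two candidate terms. Writing \( t = \fmup^{-1}(2\xx/\dimp) \), so that \( \fmup(t) = 2\xx/\dimp \), the claim \( \dimp\, t \le \sqrt{\kappa \xx \dimp} \vee (\kappa \xx) \) reduces, after squaring in the first case, to the two scalar inequalities
\[
    \fmup(t) \ge (2/\kappa)\, t^{2}
    \quad (\kappa \xx \le \dimp),
    \qquad
    \fmup(t) \ge (2/\kappa)\, t
    \quad (\kappa \xx \ge \dimp).
\]
Since \( \fmup(t)/t^{2} \) decreases from \( 1/2 \) and \( \fmup(t)/t \) increases toward \( 1 \), in both cases the binding value of \( t \) is the crossover point \( t_{0} \) with \( \fmup(t_{0}) = 2/\kappa \), and at \( t_{0} \) both inequalities are equivalent to \( t_{0} \le 1 \). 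Thus it suffices that \( \fmup(1) = 1 - \log 2 \ge 2/\kappa \), i.e.\ \( \kappa \ge 2/(1 - \log 2) \approx 6.52 \), so \( \kappa = 6.6 \) works.

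The Chernoff optimisation and the inversion are routine; the only genuinely delicate point is the calibration of \( \kappa \) in \eqref{Ptttlg}. One must verify the two monotonicity facts for \( \fmup(t)/t^{2} \) and \( \fmup(t)/t \) and locate the binding constraint exactly at the crossover, where the numerics \( 1 - \log 2 \approx 0.307 \) against \( 2/6.6 \approx 0.303 \) leave only a razor-thin margin and pin down the stated constant.
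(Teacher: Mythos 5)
Your proposal is correct and follows essentially the same route as the paper: the exponential Chebyshev inequality with the explicit \( \chi^{2} \) Laplace transform, optimization of the exponent at \( \mu = u/(u+\dimp) \), and then the elementary lower bounds \( \fmup(t) \gtrsim t^{2} \) and \( \fmup(t) \gtrsim t \) in the two regimes, which pin down \( \kappa = 2/(1-\log 2) < 6.6 \) exactly as in the paper. Your calibration places the regime crossover at \( t_{0} \) with \( \fmup(t_{0}) = 2/\kappa \) rather than at \( t = 1 \) as the paper does, but both reduce to the same inequality \( 1 - \log 2 \ge 2/\kappa \), so this is only a cosmetic difference.
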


This is a simple version of a well known result and we present it only for 
comparison with the non-Gaussian case.
The message of this result is that the squared norm of the Gaussian vector 
\( \xiv \) concentrates around the value \( \dimp \) and the deviation over the 
level \( \dimp + \sqrt{\xx \dimp} \) are exponentially small in \( \xx \).

A similar bound can be obtained for a norm of the vector \( \BB \xiv \) where 
\( \BB \) is some given matrix.
For notational simplicity we assume that \( \BB \) is symmetric. 
Otherwise one should replace it with \( (\BB^{\T} \BB)^{1/2} \).

\begin{theorem}
\label{TexpbLGA}
Let \( \xiv \) be standard normal in \( \R^{\dimp} \).
Then for every \( \xx > 0 \) and any symmetric matrix \( \BB \), 
it holds with \( \dimA = \tr(\BB^{2}) \), 
\( \vA^{2} = 2 \tr(\BB^{4}) \), and \( a^{*} = \| \BB^{2} \|_{\infty} \)
\begin{EQA}[c]
    \P\bigl( 
        \| \BB \xiv \|^{2}
        > \dimA + (2 \vA \xx^{1/2}) \vee (6 a^{*} \xx) 
    \bigr)
    \le 
    \exp(- \xx) .
\label{PtttLGA}
\end{EQA}    
\end{theorem}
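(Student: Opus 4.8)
The plan is to diagonalize the form and reduce the statement to a one-dimensional Chernoff computation that reproduces \emph{exactly} the Gaussian rate function \( \fmup \) of Theorem~\ref{TxivG}. First I would exploit orthogonal invariance of the standard normal law: since \( \BB \) is symmetric we may assume it is diagonal with entries \( \lambda_{j} \), so that \( \| \BB \xiv \|^{2} \) has the same law as \( \sum_{j} a_{j} \xi_{j}^{2} \) with \( a_{j} \eqdef \lambda_{j}^{2} \ge 0 \) and \( \xi_{j} \) independent standard normal. In this notation \( \dimA = \sum_{j} a_{j} \), \( \vA^{2} = 2 \sum_{j} a_{j}^{2} \), and \( a^{*} = \max_{j} a_{j} \). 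Writing \( S \eqdef \| \BB \xiv \|^{2} - \dimA = \sum_{j} a_{j}(\xi_{j}^{2} - 1) \), the task becomes an upper-tail bound for \( S \), and the exact cumulant function is available:
\[
    \log \E \exp(\mu S)
    = \sum_{j} \Bigl\{ - \mu a_{j} - \tfrac{1}{2} \log(1 - 2 \mu a_{j}) \Bigr\},
    \qquad 0 < \mu < 1/(2 a^{*}) .
\]

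The crucial step is a \emph{sharp} bound on this sum. Setting \( q(c) \eqdef - c - \tfrac{1}{2} \log(1 - 2 c) \), one checks that \( q \) has a Taylor expansion with nonnegative coefficients starting at \( c^{2} \), so \( q(c)/c^{2} \) is increasing on \( (0, 1/2) \). Since \( a_{j} \le a^{*} \), this gives \( q(\mu a_{j}) \le (a_{j}/a^{*})^{2} q(\mu a^{*}) \) and hence
\[
    \log \E \exp(\mu S)
    \le \frac{\sum_{j} a_{j}^{2}}{(a^{*})^{2}} \, q(\mu a^{*})
    = \frac{\vA^{2}}{2 (a^{*})^{2}} \, q(\mu a^{*}) .
\]
I would emphasize that this monotonicity bound, rather than a cruder Bernstein-type estimate, is what preserves the sharp constants; it keeps the full function \( q \) on the right-hand side instead of its quadratic surrogate.

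Next comes the Chernoff step together with the scalar optimization. Minimizing \( \tfrac{\vA^{2}}{2 (a^{*})^{2}} q(\mu a^{*}) - \mu z \) over \( \mu \), I substitute \( t = \mu a^{*} \); the stationarity condition \( q'(t) = 2t/(1-2t) = 2 a^{*} z / \vA^{2} =: \beta \) is solved explicitly by \( t = \beta / \{2(1+\beta)\} \), and plugging back collapses the minimal value to \( -\tfrac{\vA^{2}}{4 (a^{*})^{2}} \{ \beta - \log(1+\beta) \} = -\tfrac{\vA^{2}}{4 (a^{*})^{2}} \fmup(\beta) \). This yields
\[
    \P\bigl( \| \BB \xiv \|^{2} > \dimA + z \bigr)
    \le \exp\Bigl\{ - \frac{\vA^{2}}{4 (a^{*})^{2}} \, \fmup\bigl( 2 a^{*} z / \vA^{2} \bigr) \Bigr\},
\]
which is precisely the Gaussian bound of Theorem~\ref{TxivG} with the dimension \( \dimp \) replaced by the effective value \( \vA^{2}/\{2 (a^{*})^{2}\} \).

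Finally I would convert this to the explicit threshold by inserting \( z = (2 \vA \xx^{1/2}) \vee (6 a^{*} \xx) \) and verifying \( \tfrac{\vA^{2}}{4 (a^{*})^{2}} \fmup(2 a^{*} z / \vA^{2}) \ge \xx \), splitting according to which term of the maximum is active. In the moderate regime \( 2 \vA \xx^{1/2} \ge 6 a^{*} \xx \) this reduces, with \( s = 2 a^{*} z/\vA^{2} \in (0, 4/3] \), to the elementary inequality \( \fmup(s) \ge s^{2}/4 \); in the large-deviation regime it reduces to \( \fmup(6 r) \ge 2 r \) for \( r = 2 (a^{*})^{2} \xx / \vA^{2} \ge 2/9 \). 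I expect the main obstacle to lie exactly here and in the preceding MGF bound: the monotonicity argument for \( q(c)/c^{2} \) is what makes the one-dimensional optimum equal to the genuine Gaussian rate \( \fmup \), and the constants \( 2 \) and \( 6 \) are calibrated so that the two scalar inequalities hold with equality-touching behavior at the crossover \( \xx \asymp \vA^{2}/(a^{*})^{2} \); a looser bound would degrade them.
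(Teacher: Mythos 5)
Your proposal is correct, and every step checks out: the Taylor coefficients of \( q(c) = -c - \tfrac12\log(1-2c) \) are indeed nonnegative so \( q(c)/c^{2} \) increases on \( (0,1/2) \); the stationary point \( t = \beta/\{2(1+\beta)\} \) does collapse the Chernoff exponent to \( -\tfrac{\vA^{2}}{4(a^{*})^{2}}\fmup(2a^{*}z/\vA^{2}) \); and the two closing scalar inequalities \( \fmup(s)\ge s^{2}/4 \) on \( (0,4/3] \) (the function \( \fmup(s)-s^{2}/4 \) vanishes at \( 0 \), increases up to \( s=1 \), and is still positive, about \( 0.04 \), at \( s=4/3 \)) and \( \fmup(6r)\ge 2r \) for \( r\ge 2/9 \) both hold. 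However, your route differs from the paper's in the key lemma. The paper also diagonalizes and uses the exact MGF, but it then replaces each term \( \mu a_{i}+\log(1-\mu a_{i}) \) by the quadratic surrogate \( -\mu^{2}a_{i}^{2} \) (valid for \( \mu a_{i}\le 2/3 \)) and chooses \( \mu \) separately in two regimes (\( \mu = u/\vA^{2} \) when \( \xx^{1/2}\le\vA/3 \), and \( \mu=2/3 \) otherwise), never performing an exact optimization. Your comparison inequality \( q(\mu a_{j})\le (a_{j}/a^{*})^{2}q(\mu a^{*}) \) keeps the full cumulant function and yields the strictly stronger intermediate bound
\begin{EQA}[c]
    \P\bigl( \| \BB \xiv \|^{2} > \dimA + z \bigr)
    \le
    \exp\Bigl\{ - \frac{\vA^{2}}{4 (a^{*})^{2}} \, \fmup\bigl( 2 a^{*} z / \vA^{2} \bigr) \Bigr\},
\end{EQA}
i.e.\ the exact Gaussian chi-squared tail of Theorem~\ref{TxivG} with effective dimension \( \vA^{2}/\{2(a^{*})^{2}\} \), from which \eqref{PtttLGA} follows as a corollary. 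What your approach buys is this cleaner and reusable intermediate statement (and a transparent explanation of where the constants \( 2 \) and \( 6 \) come from); what the paper's approach buys is brevity and a template that transfers directly to the non-Gaussian truncated-MGF setting of Theorem~\ref{TxivqLDA}, where only the surrogate bound \( t+\log(1-t)\ge -t^{2} \) is needed. Both arguments are valid and produce the same constants.
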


Below we establish  similar bounds for a non-Gaussian vector \( \xiv \) 
obeying \eqref{expgamgm}.

\section{A bound for the \( \ell_{2} \)-norm}
\label{SchiquadE}
This section presents a general exponential bound for the probability 
\( \P\bigl( \| \xiv \| > \yy \bigr) \) under \eqref{expgamgm}.
The main result tells us that if \( \yy \) is not too large, 
namely if \( \yy \le \yyc \) with \( \yyc^{2} \asymp \gm^{2} \), then 
the deviation probability is essentially the same as in the Gaussian case.

To describe the value \( \yyc \), introduce the following notation.
Given \( \gm \) and \( \dimp \),
define the values \( w_{0} = \gm \dimp^{-1/2} \) and 
\( \wwc \) by the equation
\begin{EQA}[c]
    \frac{\wwc(1+\wwc)}{(1+\wwc^{2})^{1/2}}
    =
    w_{0} 
    =
    \gm \dimp^{-1/2}.
\label{wc212}
\end{EQA}    
It is easy to see that \( w_{0}/\sqrt{2} \le \wwc \le w_{0} \). 
Further define 
\begin{EQA}
    \muc
    & \eqdef &
    \wwc^{2}/(1+\wwc^{2})
    \\
    \yyc 
    & \eqdef &
    \sqrt{(1 + \wwc^{2}) \dimp} ,
\label{yyc1wc}
    \\
    \xxc 
    & \eqdef &
    0.5 \dimp \bigl[ \wwc^{2} - \log\bigl( 1 + \wwc^{2} \bigr) \bigr].
\label{zcgmp}
\end{EQA}
Note that for \( \gm^{2} \ge \dimp \), the quantities \( \yyc \) and \( \xxc \) can be 
evaluated as 
\( \yyc^{2} \ge \wwc^{2} \dimp \ge \gm^{2}/2 \) and 
\( \xxc \gtrsim \dimp \wwc^{2}/2 \ge \gm^{2}/4 \).

\begin{theorem}
\label{TxivqLD} 
Let \( \xiv \in \R^{\dimp} \) fulfill \eqref{expgamgm}. 
Then it holds for each \( \xx \le \xxc \) 
\begin{EQA}
    \P\bigl( 
        \| \xiv \|^{2} > \dimp + \sqrt{\kappa \xx \dimp} \vee (\kappa \xx) , \,\,
        \| \xiv \| \le \yyc
    \bigr)
    & \le &
    2 \exp( - \xx ),
\label{expxibo}
\end{EQA}    
where \( \kappa = 6.6 \).
Moreover, for \( \yy \ge \yyc \), it holds with 
\( \gmc = \gm - \sqrt{\muc \dimp} = \gm \wwc/(1+\wwc) \)
\begin{EQA}
    \P\bigl( \| \xiv \| > \yy \bigr)
    & \le &
    8.4 \exp\bigl\{ - \gmc \yy/2 - (\dimp/2) \log(1 - \gmc/\yy) \bigr\}
    \\
    & \le &
    8.4 \exp\bigl\{ - \xxc - \gmc (\yy - \yyc)/2 \bigr\}.
\label{Pexp2xit}
\end{EQA}
\end{theorem}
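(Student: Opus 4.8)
The engine for the whole theorem is the Gaussian integration identity: if \( \gammav \sim \ND(0, \mu \Id_{\dimp}) \) is drawn independently of \( \xiv \) with \( 0 < \mu < 1 \), then \( \E_{\gammav} \exp(\gammav^{\T} \xiv) = \exp(\mu \|\xiv\|^{2}/2) \), so by Fubini
\[
    \E \exp\bigl( \mu \|\xiv\|^{2}/2 \bigr)
    =
    \E_{\gammav} \bigl[ \E_{\xiv} \exp(\gammav^{\T} \xiv) \bigr].
\]
Condition \eqref{expgamgm} controls the inner expectation by \( \exp(\|\gammav\|^{2}/2) \) only on the ball \( \{ \|\gammav\| \le \gm \} \), and over that ball the \( \ND(0,\mu\Id_{\dimp}) \) integral of \( \exp(\|\gammav\|^{2}/2) \) evaluates to at most \( (1 - \mu)^{-\dimp/2} \). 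The quantities \( \wwc, \muc, \yyc, \xxc \) are exactly the saddle-point values that make the truncation \( \|\gammav\| \le \gm \) compatible with the deviation level: \eqref{wc212} is the matching condition and \( \muc = \wwc^{2}/(1+\wwc^{2}) \) is the largest admissible value of \( \mu \).

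For the Gaussian-type bound \eqref{expxibo} I would apply Chernoff to the truncated variable. For \( \mu \le \muc \),
\[
    \P\bigl( \|\xiv\|^{2} > \dimp + z, \ \|\xiv\| \le \yyc \bigr)
    \le
    e^{-\mu(\dimp + z)/2} \,
    \E\bigl[ \exp(\mu \|\xiv\|^{2}/2) \, ; \, \|\xiv\| \le \yyc \bigr].
\]
On \( \{ \|\xiv\| \le \yyc \} \) the discarded part of the \( \gammav \)-integral, where \( \|\gammav\| > \gm \), is controlled through \( |\gammav^{\T}\xiv| \le \|\gammav\| \yyc \), whose Gaussian saddle sits at \( \|\gammav\| = \mu\yyc \le \gm \) precisely when \( \mu \le \muc \); the admissible part contributes \( (1-\mu)^{-\dimp/2} \). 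This gives \( \E[\exp(\mu\|\xiv\|^{2}/2) ; \|\xiv\| \le \yyc] \le 2 (1-\mu)^{-\dimp/2} \), the factor \( 2 \) being the source of the prefactor in \eqref{expxibo}. Optimizing over \( \mu \le \muc \) reproduces the Gaussian Chernoff exponent of Theorem~\ref{TxivG}, and the constraint \( \xx \le \xxc \) is exactly what keeps the optimal \( \mu \) feasible; the passage \eqref{Ptttlg0}--\eqref{Ptttlg} from the implicit \( \fmup \)-bound to the explicit deviation \( \sqrt{\kappa \xx \dimp} \vee (\kappa \xx) \) with \( \kappa = 6.6 \) then yields \eqref{expxibo}.

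For the tail bound \eqref{Pexp2xit} with \( \yy \ge \yyc \) the unconstrained chi-square optimizer \( 1 - \dimp/\yy^{2} \) exceeds \( \muc \), so I would instead select the \emph{feasible} multiplier \( \mu = \gmc/\yy \), which satisfies \( \mu \le \gmc/\yyc = \muc \) because \( \gmc/\yyc = \muc \) by \eqref{yyc1wc}. With this choice the truncated-moment estimate (same mechanism as above, but with the larger overhead constant \( 8.4 \) since \( \xiv \) is no longer restricted) turns \( \P(\|\xiv\|^{2} > \yy^{2}) \le 8.4 (1-\mu)^{-\dimp/2} e^{-\mu \yy^{2}/2} \) into exactly the first line of \eqref{Pexp2xit}, since \( \mu \yy^{2}/2 = \gmc \yy/2 \) and \( \log(1-\mu) = \log(1 - \gmc/\yy) \). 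The second inequality is elementary calculus: writing \( f(\yy) = \gmc \yy/2 + (\dimp/2) \log(1 - \gmc/\yy) \), one checks \( f(\yyc) = \xxc \) from the definitions \eqref{yyc1wc}--\eqref{zcgmp} and \( f'(\yy) \ge \gmc/2 \) for \( \yy \ge \yyc \), whence \( f(\yy) \ge \xxc + \gmc(\yy - \yyc)/2 \).

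The step I expect to be the main obstacle is the control of the region \( \{ \|\gammav\| > \gm \} \): condition \eqref{expgamgm} is silent there, so the Laplace integral cannot be taken over all of \( \R^{\dimp} \) and the full exponential moment \( \E \exp(\mu \|\xiv\|^{2}/2) \) may even diverge. Everything hinges on matching the saddle point of the radial integral to the hard boundary \( \|\gammav\| = \gm \), which is what forces the threshold \( \yyc \) and the boundary-pinned multiplier in the first place. Turning this heuristic into a genuine upper bound — quantifying the discarded mass, verifying the feasibility \( \mu \le \muc \) uniformly along the way, and pinning down the explicit constants \( 2 \) and \( 8.4 \) (rather than merely the leading exponential order) — is where the careful non-asymptotic bookkeeping is concentrated.
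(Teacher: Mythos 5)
Your argument for the first bound \eqref{expxibo} is essentially the paper's: both rest on the Gaussian-randomization identity for \( \exp(\mu\|\xiv\|^{2}/2) \), the restriction of the \( \gammav \)-integral to \( \{\|\gammav\|\le\gm\} \) where \eqref{expgamgm} applies, and the observation that the truncation costs only a factor \( 2 \) as long as \( \|\xiv\| \le \gm/\mu - \sqrt{\dimp/\mu} \) (the paper gets the factor \( 2 \) from the exact identity ``truncated integral \( = \exp(\mu\|\xiv\|^2/2)\,\P(\|\varepsilonv+\mu^{1/2}\xiv\|\le\mu^{-1/2}\gm) \)'' together with \( \P(\|\varepsilonv\|^{2}>\dimp)\le 1/2 \), rather than by bounding the discarded shell \( \|\gammav\|>\gm \) via \( |\gammav^{\T}\xiv|\le\|\gammav\|\yyc \), but the two computations impose the same constraint and yield the same constant). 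The Chernoff step with \( \mu = u/(u+\dimp) \) and the feasibility condition defining \( \yyc \) and \( \xxc \) then match the paper exactly.

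The genuine gap is in the large-deviation bound \eqref{Pexp2xit}. You assert \( \P(\|\xiv\|>\yy) \le 8.4\,(1-\gmc/\yy)^{-\dimp/2}\exp(-\gmc\yy/2) \) as coming from ``the same mechanism with a larger overhead constant since \( \xiv \) is no longer restricted,'' but no mechanism is given, and the single-\( \mu \) Chernoff argument cannot deliver it: the truncated moment bound controls \( \E\bigl[\exp(\mu\|\xiv\|^{2}/2)\Ind(\|\xiv\|\le \gm/\mu-\sqrt{\dimp/\mu})\bigr] \) only, whereas the event \( \{\|\xiv\|>\yy\} \) is unbounded and (as you yourself note) the untruncated moment \( \E\exp(\mu\|\xiv\|^{2}/2) \) may be infinite. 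The missing idea is the slicing (peeling) argument of Lemma~\ref{Lexpxi2}: decompose \( \{\|\xiv\|>\yy\} = \bigcup_{k\ge 1}\{\yy_{k}<\|\xiv\|\le\yy_{k+1}\} \) with \( \gmc\yy_{k+1}=\gmc\yy+k \), apply the truncated bound on the \( k \)-th slice with the slice-dependent multiplier \( \mu_{k+1}=\gmc/\yy_{k+1} \) (which decreases in \( k \), so the truncation radius \( \gm/\mu_{k+1}-\sqrt{\dimp/\mu_{k+1}} \ge \yy_{k+1} \) stays valid along the whole sequence), and sum the resulting geometric series. The constant \( 8.4 \) is not an ``overhead'' of the truncation: it is \( 2e^{1/2}(1-e^{-1/2})^{-1} \), produced precisely by this summation. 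Without the slicing step the first line of \eqref{Pexp2xit} is unproved; the second line (the linearization \( f(\yy)\ge\xxc+\gmc(\yy-\yyc)/2 \) via \( f'(\yy)\ge\gmc/2 \)) you do handle correctly.
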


The statements of Theorem~\ref{TxivqLDA} can be simplified 
under the assumption \( \gm^{2} \ge \dimp \).
\begin{corollary}
\label{CTxivqLDA}
Let \( \xiv \) fulfill \eqref{expgamgm} and \( \gm^{2} \ge \dimp \). 
Then it holds for \( \xx \le \xxc \) 
\begin{EQA}
\label{Pzzxxp}
    \P\bigl( \| \xiv \|^{2} \ge \zz(\xx,\dimp) \bigr)
    & \le &
    2 \ex^{-\xx} + 8.4 \ex^{-\xxc},
    \\
    \zz(\xx,\dimp)
    & \eqdef &
    \begin{cases}
        \dimp + \sqrt{\kappa \xx \dimp}, &  \xx \le \dimp/\kappa , \\
        \dimp + \kappa \xx & \dimp/\kappa < \xx \le \xxc ,
    \end{cases}
\label{zzxxppd}
\end{EQA}    
with \( \kappa = 6.6 \).
For \( \xx > \xxc \)
\begin{EQA}
    \P\bigl( \| \xiv \|^{2} \ge \zzc(\xx,\dimp) \bigr)
    & \le &
    8.4 \ex^{-\xx},
    \qquad 
    \zzc(\xx,\dimp)
    \eqdef 
    \bigl| \yyc + 2 (\xx - \xxc)/\gmc \bigr|^{2} .
\label{zzcxxppd}
\end{EQA}    
\end{corollary}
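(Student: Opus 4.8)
The plan is to read off both assertions directly from Theorem~\ref{TxivqLD}, the hypothesis \( \gm^{2} \ge \dimp \) serving only to ensure that both branches of \( \zz(\xx,\dimp) \) are actually attained.

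For \( \xx \le \xxc \) I would start from the constrained estimate in the first display of Theorem~\ref{TxivqLD}, which bounds the probability of the event \( \{ \| \xiv \|^{2} > \dimp + \sqrt{\kappa \xx \dimp} \vee (\kappa \xx) \} \cap \{ \| \xiv \| \le \yyc \} \) by \( 2 \ex^{-\xx} \). The elementary remark that \( \sqrt{\kappa \xx \dimp} \ge \kappa \xx \) holds iff \( \xx \le \dimp/\kappa \) shows that the max-threshold \( \dimp + \sqrt{\kappa \xx \dimp} \vee (\kappa \xx) \) is exactly the piecewise quantity \( \zz(\xx,\dimp) \) of \eqref{zzxxppd}. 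To drop the side constraint I would invoke the second display of Theorem~\ref{TxivqLD} at the boundary point \( \yy = \yyc \), where the exponent collapses and gives \( \P(\| \xiv \| > \yyc) \le 8.4\, \ex^{-\xxc} \); a union bound over the two events then produces \eqref{Pzzxxp}.

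For \( \xx > \xxc \) I would invert the exponential rate in the second display. Taking \( \yy = \yyc + 2(\xx - \xxc)/\gmc \) makes the exponent \( -\xxc - \gmc(\yy - \yyc)/2 \) equal to \( -\xx \); since \( \xx > \xxc \) forces \( \yy > \yyc \ge 0 \), the theorem applies and squaring the event yields \( \P(\| \xiv \|^{2} > \yy^{2}) \le 8.4\, \ex^{-\xx} \) with \( \yy^{2} = |\yyc + 2(\xx - \xxc)/\gmc|^{2} = \zzc(\xx,\dimp) \), which is exactly \eqref{zzcxxppd}.

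The argument is essentially bookkeeping, so I expect no genuine difficulty. The only points deserving care are the identification of the max-threshold with the piecewise \( \zz(\xx,\dimp) \), and checking via the pre-theorem estimate \( \xxc \gtrsim \gm^{2}/4 \ge \dimp/4 > \dimp/\kappa \) that the condition \( \gm^{2} \ge \dimp \) really does place \( \dimp/\kappa \) inside the range \( \xx \le \xxc \), so that the first branch of \eqref{zzxxppd} is genuinely used rather than empty.
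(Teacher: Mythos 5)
Your derivation is correct and is exactly the intended one: the paper gives no separate proof of this corollary, treating it as an immediate consequence of Theorem~\ref{TxivqLD} via the same decomposition you use (identify \( \zz(\xx,\dimp) \) with \( \dimp + \sqrt{\kappa \xx \dimp} \vee (\kappa \xx) \), union-bound with \( \P(\|\xiv\|>\yyc) \le 8.4\ex^{-\xxc} \), and invert the large-deviation exponent at \( \yy = \yyc + 2(\xx-\xxc)/\gmc \)). Your closing remark about \( \dimp/\kappa \le \xxc \) matches the paper's own discussion following the corollary (display \eqref{dm2dimp}), and the only slippage in either argument is the harmless mismatch between strict and non-strict inequalities inherited from the theorem.
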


This result implicitly assumes that \( \dimp \le \kappa \xxc \) which is fulfilled 
if \( w_{0}^{2} = \gm^{2}/\dimp \ge 1 \):
\begin{EQA}[c]
    \kappa \xxc
    =
    0.5 \kappa \bigl[ w_{0}^{2} - \log(1 + w_{0}^{2}) \bigr] \dimp
    \ge 
    3.3 \bigl[ 1 - \log(2) \bigr] \dimp
    > 
    \dimp .
\label{dm2dimp} 
\end{EQA}    
For \( \xx \le \xxc \), the function \( \zz(\xx,\dimp) \) mimics the quantile 
behavior of the chi-squared distribution \( \chi^{2}_{\dimp} \) with \( \dimp \) 
degrees of freedom.
Moreover, increase of the value \( \gm \) yields a growth of the sub-Gaussian zone.
In particular, for \( \gm = \infty \), a general quadratic form \( \| \xiv \|^{2} \) 
has under \eqref{expgamgm} the same tail behavior as in the Gaussian case.

Finally, in the large deviation zone \( \xx > \xxc \) the deviation probability 
decays as \( \ex^{-c \xx^{1/2}} \) for some fixed \( c \).
However, if the constant \( \gm \) in the condition \eqref{expgamgm} is sufficiently large 
relative to \( \dimp \), then \( \xxc \) is large as well and the large deviation 
zone \( \xx > \xxc \) can be ignored at a small price of \( 8.4 \ex^{-\xxc} \) and 
one can focus on the deviation bound described by \eqref{Pzzxxp} and \eqref{zzxxppd}.

\section{A bound for a quadratic form}
\label{Sbqf}
Now we extend the result to more general bound for 
\( \| \BB \xiv \|^{2} = \xiv^{\T} \BB^{2} \xiv \) with a given matrix 
\( \BB \) and a vector \( \xiv \) obeying the condition \eqref{expgamgm}. 
Similarly to the Gaussian case we assume that \( \BB \) is symmetric. 
Define important characteristics of \( \BB \)
\begin{EQA}[c]
    \dimA = \tr (\BB^{2}) , 
    \qquad 
    \vA^{2} = 2 \tr(\BB^{4}),
    \qquad 
    \lambdaB \eqdef \| \BB^{2} \|_{\infty} \eqdef \lambda_{\max}(\BB^{2}) .
\label{dimAvAlb}
\end{EQA}
For simplicity of formulation we suppose that \( \lambdaB = 1 \),
otherwise one has to replace \( \dimA \) and \( \vA^{2} \) with 
\( \dimA/\lambdaB \) and \( \vA^{2}/\lambdaB \).

Let \( \gm \) be shown in \eqref{expgamgm}.
Define similarly to the \( \ell_{2} \)-case 
\( \wwc \) by the equation
\begin{EQA}[c]
    \frac{\wwc(1+\wwc)}{(1+\wwc^{2})^{1/2}}
    =
    \gm \dimA^{-1/2} .
\label{wc212A}
\end{EQA}
Define also \( \muc = \wwc^{2}/(1+\wwc^{2}) \wedge 2/3 \).
Note that \( \wwc^{2} \ge 2 \) implies \( \muc = 2/3 \).
Further define
\begin{EQA}
    \yyc^{2} = (1 + \wwc^{2}) \dimA,
    \qquad 
    2 \xxc
    & = &
    \muc \yyc^{2} + \log \det\{ \Id_{\dimp} - \muc \BB^{2} \} .
\label{xxcyycA}
\end{EQA}
Similarly to the case with \( \BB = \Id_{\dimp} \), under the condition 
\( \gm^{2} \ge \dimA \), one can bound 
\( \yyc^{2} \ge \gm^{2}/2 \) and \( \xxc \gtrsim \gm^{2}/4 \).

\begin{theorem}
\label{TxivqLDA}
Let a random vector \( \xiv \) in \( \R^{\dimp} \) fulfill \eqref{expgamgm}.
Then for each \( \xx < \xxc \)
\begin{EQA}
    \P\bigl( 
        \| \BB \xiv \|^{2} > \dimA + (2 \vA \xx^{1/2}) \vee (6 \xx), \,\, 
        \| \BB \xiv \| \le \yyc
    \bigr)
    & \le &
    2 \exp( - \xx ) .
\label{expxiboA}
\end{EQA}    
Moreover, for \( \yy \ge \yyc \), with
\( \gmc = \gm - \sqrt{\muc \dimA} = \gm \wwc/(1+\wwc) \),
it holds
\begin{EQA}
    \P\bigl( \| \BB \xiv \| > \yy \bigr)
    & \le &
    8.4 \exp\bigl( - \xxc - \gmc (\yy - \yyc)/2 \bigr) .
\label{expxibogA}
\end{EQA}
\end{theorem}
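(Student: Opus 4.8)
The plan is to bound both probabilities by the Chernoff method, reducing the quadratic form \( \| \BB \xiv \|^{2} = \xiv^{\T} \BB^{2} \xiv \) to the linear exponential moment controlled by \eqref{expgamgm} through the Hubbard--Stratonovich (Gaussian integral) linearization. For \( 0 < \mu < \lambdaB^{-1} = 1 \), let \( \gammav \) denote an auxiliary standard normal vector in \( \R^{\dimp} \) independent of \( \xiv \); since \( \BB \) is symmetric,
\begin{EQA}[c]
    \exp\bigl( \tfrac{\mu}{2} \| \BB \xiv \|^{2} \bigr)
    =
    \E_{\gammav} \exp\bigl( \sqrt{\mu}\, (\BB \gammav)^{\T} \xiv \bigr) .
\end{EQA}
Taking \( \E_{\xiv} \) and applying \eqref{expgamgm} to the direction \( \sqrt{\mu}\, \BB \gammav \) whenever \( \sqrt{\mu}\, \| \BB \gammav \| \le \gm \) yields, after integrating the outer Gaussian factor over this admissible region,
\begin{EQA}[c]
    \E_{\xiv} \exp\bigl( \tfrac{\mu}{2} \| \BB \xiv \|^{2} \bigr)
    \,\lesssim\,
    \det\bigl( \Id_{\dimp} - \mu \BB^{2} \bigr)^{-1/2},
\end{EQA}
which is precisely the Gaussian moment generating function underlying Theorem~\ref{TexpbLGA}. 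Thus the leading terms match the Gaussian benchmark automatically, and the real work is to control the complementary region \( \sqrt{\mu}\, \| \BB \gammav \| > \gm \), where \eqref{expgamgm} is unavailable.

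For the moderate-deviation bound \eqref{expxiboA} I would restrict to \( \xx < \xxc \), which by \eqref{wc212A}--\eqref{xxcyycA} keeps the optimal Chernoff parameter \( \mu \) strictly below the critical value \( \muc \); together with the event \( \{ \| \BB \xiv \| \le \yyc \} \) this confines the relevant directions to \( \| \gammav \| \le \gm \) and shows the excluded region costs at most a factor \( 2 \). Markov's inequality then gives
\begin{EQA}[c]
    \P\bigl( \| \BB \xiv \|^{2} > t, \, \| \BB \xiv \| \le \yyc \bigr)
    \le
    2 \exp\Bigl\{ - \tfrac{\mu t}{2} - \tfrac{1}{2} \log\det\bigl( \Id_{\dimp} - \mu \BB^{2} \bigr) \Bigr\}.
\end{EQA}
Using \( \tr(\BB^{2}) = \dimA \), \( \tr(\BB^{4}) = \vA^{2}/2 \) and \( \lambdaB = 1 \) to expand \( -\tfrac{1}{2} \log\det( \Id_{\dimp} - \mu \BB^{2} ) = \tfrac{1}{2} \sum_{i} [ -\log(1 - \mu \lambda_{i}) ] \), where \( \lambda_{i} \) are the eigenvalues of \( \BB^{2} \), the exponent is to second order \( -\tfrac{\mu (t - \dimA)}{2} + \tfrac{\mu^{2} \vA^{2}}{8} + \ldots \); optimizing in \( \mu \) reproduces the two regimes of Theorem~\ref{TexpbLGA}, namely the Gaussian term of order \( \vA \xx^{1/2} \) for small \( \mu \) and the linear term of order \( \xx \) near the spectral edge, and matching its explicit constants \( 2 \) and \( 6 \) (with room for the truncation factor) yields \( \dimA + (2 \vA \xx^{1/2}) \vee (6 \xx) \) at level \( \exp(-\xx) \).

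For the tail bound \eqref{expxibogA} with \( \yy \ge \yyc \) the optimization is frozen at the critical \( \mu = \muc \): beyond the radius \( \yyc \) the growth of \( \| \BB \xiv \| \) is governed by the linear form in its worst direction, for which \eqref{expgamgm} supplies a linear rather than quadratic Chernoff bound with rate \( \gmc/2 \), where \( \gmc = \gm \wwc/(1+\wwc) \) measures the residual exponential-moment budget after subtracting the deterministic drift \( \sqrt{\muc \dimA} \). Evaluating the frozen rate function at \( \yyc \) produces exactly the constant \( \xxc \) of \eqref{xxcyycA}, giving \( 8.4 \exp( -\xxc - \gmc (\yy - \yyc)/2 ) \). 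I expect the main obstacle throughout to be the truncation to \( \{ \sqrt{\mu}\, \| \BB \gammav \| \le \gm \} \): bounding its complement without destroying the sharp leading constants is what forces the precise calibration of \( \wwc \) in \eqref{wc212A} and what fixes the explicit prefactors \( 2 \) and \( 8.4 \). The general-\( \BB \) case differs from the \( \ell_{2} \)-case of Theorem~\ref{TxivqLD} only through the spectral quantities \( \dimA, \vA^{2}, \lambdaB \) replacing \( \dimp \), so the same argument applies once the determinant is expanded in the eigenvalues of \( \BB^{2} \).
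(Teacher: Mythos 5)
Your first half follows the paper's own route. The Gaussian linearization \( \exp\bigl( \mu \| \BB \xiv \|^{2}/2 \bigr) = \E_{\gammav} \exp\bigl( \sqrt{\mu}\,(\BB\gammav)^{\T}\xiv \bigr) \) is exactly the integral representation in the paper's Lemma~\ref{Lexpxig}, and the factor \( 2 \) does come from showing that the admissible region \( \{ \sqrt{\mu}\|\BB\gammav\| \le \gm \} \) retains at least half of the Gaussian mass. One caution: that half-mass bound is the statement \( \P_{\xiv}\bigl( \| \mu^{-1/2}\BB\varepsilonv + \BB^{2}\xiv \| \le \gm/\mu \bigr) \ge 1/2 \), which holds only when \( \xiv \) lies in the truncation event \( \{ \| \BB^{2}\xiv \| \le \gm/\mu - \sqrt{\dimA/\mu} \} \). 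So the correct conclusion is \( \E \exp\bigl( \mu\|\BB\xiv\|^{2}/2 \bigr) \Ind\bigl( \|\BB^{2}\xiv\| \le \gm/\mu - \sqrt{\dimA/\mu} \bigr) \le 2 \det(\Id_{\dimp} - \mu\BB^{2})^{-1/2} \); the unrestricted moment generating function in your second display can be infinite under \eqref{expgamgm} with finite \( \gm \). Since you then reinstate the event \( \{ \|\BB\xiv\| \le \yyc \} \) (which implies \( \|\BB^{2}\xiv\| \le \yyc \) because \( \|\BB^{2}\|_{\infty} \le 1 \)) before applying Markov, the moderate-deviation bound \eqref{expxiboA} goes through essentially as you describe, with the same optimization over \( \mu \le \muc \wedge 2/3 \) and the inequality \( \log(1-t) \ge -t - t^{2} \) producing the two regimes \( 2\vA\xx^{1/2} \) and \( 6\xx \).

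The gap is in the tail bound \eqref{expxibogA}. A single Chernoff application with the frozen \( \mu = \muc \) cannot control \( \P\bigl( \|\BB\xiv\| > \yy \bigr) \), precisely because the exponential-moment bound you possess is truncated: it constrains \( \xiv \) only on the bounded set \( \{ \|\BB^{2}\xiv\| \le \gm/\muc - \sqrt{\dimA/\muc} \} = \{ \|\BB^{2}\xiv\| \le \yyc \} \), while the event \( \{ \|\BB\xiv\| > \yy \} \) lives partly outside every such set. The paper resolves this with a slicing argument (Lemma~\ref{Lexpxi2g}, inherited from Lemma~\ref{Lexpxi2}): decompose \( \{ \|\BB\xiv\| > \yy \} \) into annuli \( \{ \yy_{k} < \|\BB\xiv\| \le \yy_{k+1} \} \) with \( \gmc\yy_{k+1} = \gmc\yy + k \), and bound the \( k \)-th slice using the truncated moment bound with the slice-dependent parameter \( \mu_{k+1} = \gmc/\yy_{k+1} \), chosen so that the truncation radius \( \gm/\mu_{k+1} - \sqrt{\dimA/\mu_{k+1}} \) stays above \( \yy_{k+1} \). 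Summing the resulting geometric series is what produces the constant \( 8.4 = 2 e^{1/2}(1 - e^{-1/2})^{-1} \); it is not a by-product of the truncation cost alone. Your ``linear Chernoff bound in the worst direction with rate \( \gmc/2 \)'' is the right heuristic for the shape of the answer, but as stated it is not an argument: you need the slicing, or an equivalent multi-scale device, to cover the unbounded event.
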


Now we describe the value \( \zz(\xx,\BB) \) ensuring a small value for the large deviation probability 
\( \P\bigl( \| \BB \xiv \|^{2} > \zz(\xx,\BB) \bigr) \).
For ease of formulation, we suppose that \( \gm^{2} \ge 2 \dimA \) yielding 
\( \muc^{-1} \le 3/2 \).
The other case can be easily adjusted. 

\begin{corollary}
\label{CTxivqLDAB}
Let \( \xiv \) fulfill \eqref{expgamgm} with \( \gm^{2} \ge 2 \dimA \). 
Then it holds for \( \xx \le \xxc \) with \( \xxc \) from \eqref{xxcyycA}:
\begin{EQA}
\label{PzzxxpB}
    \P\bigl( \| \BB \xiv \|^{2} \ge \zz(\xx,\BB) \bigr)
    & \le &
    2 \ex^{-\xx} + 8.4 \ex^{-\xxc},
    \\
    \zz(\xx,\BB)
    & \eqdef &
    \begin{cases}
        \dimA + 2 \vA \xx^{1/2}, &  \xx \le \vA/18 , \\
        \dimA + 6 \xx & \vA/18 < \xx \le \xxc .
    \end{cases}
\label{zzxxppdB}
\end{EQA}    
For \( \xx > \xxc \)
\begin{EQA}
    \P\bigl( \| \BB \xiv \|^{2} \ge \zzc(\xx,\BB) \bigr)
    & \le &
    8.4 \ex^{-\xx},
    \qquad 
    \zzc(\xx,\BB)
    \eqdef 
    \bigl| \yyc + 2 (\xx - \xxc)/\gmc \bigr|^{2} .
\label{zzcxxppdB}
\end{EQA}    
\end{corollary}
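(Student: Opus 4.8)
The plan is to obtain Corollary~\ref{CTxivqLDAB} as a direct repackaging of Theorem~\ref{TxivqLDA}, matching its two inequalities \eqref{expxiboA} and \eqref{expxibogA} to the two regimes $\xx \le \xxc$ and $\xx > \xxc$. First I would record the consequences of the standing hypotheses. The normalization $\lambdaB = 1$ gives $\vA^{2} = 2\tr(\BB^{4}) \ge 2$, hence $\vA \ge \sqrt{2}$, which is what I need to locate the crossover of the two branches of $\zz(\xx,\BB)$. The assumption $\gm^{2} \ge 2\dimA$ forces, through \eqref{wc212A}, that $\wwc^{2} \ge 2$, so that $\muc = \wwc^{2}/(1+\wwc^{2}) \wedge 2/3 = 2/3$; this is exactly the value that pins down the slope $6$ in \eqref{expxiboA} and the expression $\gmc = \gm \wwc/(1+\wwc)$, so no further work on the constants is needed beyond reading them off the theorem.

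In the moderate zone $\xx \le \xxc$, the first step is to check that the piecewise quantity $\zz(\xx,\BB)$ of \eqref{zzxxppdB} never falls below the threshold $\dimA + (2\vA\xx^{1/2}) \vee (6\xx)$ of \eqref{expxiboA}. The maximum switches branches where $2\vA\xx^{1/2} = 6\xx$, i.e.\ at $\xx = \vA^{2}/9$, and the split point in \eqref{zzxxppdB} is placed so that on each piece the active term of the maximum is the one used to define $\zz$. Granting this, I would split according to whether $\|\BB\xiv\|$ exceeds $\yyc$:
\[
\P\bigl(\|\BB\xiv\|^{2} \ge \zz(\xx,\BB)\bigr)
\le
\P\bigl(\|\BB\xiv\|^{2} > \dimA + (2\vA\xx^{1/2})\vee(6\xx),\ \|\BB\xiv\| \le \yyc\bigr)
+ \P\bigl(\|\BB\xiv\| > \yyc\bigr).
\]
The first term is at most $2\,\ex^{-\xx}$ by \eqref{expxiboA}. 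For the second term I would evaluate \eqref{expxibogA} at $\yy = \yyc$, where the exponent collapses to $-\xxc$, giving $\P(\|\BB\xiv\| > \yyc) \le 8.4\,\ex^{-\xxc}$. Summing yields the asserted bound $2\,\ex^{-\xx} + 8.4\,\ex^{-\xxc}$.

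In the large-deviation zone $\xx > \xxc$, the idea is to invert the tail bound \eqref{expxibogA}. I would set its exponent equal to $-\xx$, i.e.\ solve $\xxc + \gmc(\yy - \yyc)/2 = \xx$, obtaining $\yy = \yyc + 2(\xx - \xxc)/\gmc$; since $\xx > \xxc$ this $\yy$ satisfies $\yy > \yyc$, so \eqref{expxibogA} applies and gives $\P(\|\BB\xiv\| > \yy) \le 8.4\,\ex^{-\xx}$. Passing to squares identifies $\yy^{2} = |\yyc + 2(\xx-\xxc)/\gmc|^{2} = \zzc(\xx,\BB)$, which is precisely \eqref{zzcxxppdB}.

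I expect the one genuinely delicate point to be the first step of the moderate zone: verifying that the explicit split in the definition of $\zz(\xx,\BB)$ really places each $\xx$ in the branch where the corresponding term of $(2\vA\xx^{1/2})\vee(6\xx)$ dominates, so that $\zz(\xx,\BB)$ bounds the theorem's threshold from above and the containment of events is valid. This is where the lower bound $\vA \ge \sqrt{2}$ is used and where the location of the crossover $\xx = \vA^{2}/9$ must be reconciled with the stated split point. Everything else—tracking that $\gm^{2} \ge 2\dimA$ yields $\muc = 2/3$, reading off $\gmc$, and the affine inversion of the exponent—is routine bookkeeping.
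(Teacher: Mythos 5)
Your route is the right one, and it is in fact the paper's: no separate proof of Corollary~\ref{CTxivqLDAB} is given, the corollary being understood as a direct repackaging of Theorem~\ref{TxivqLDA} via exactly your decomposition --- split on the event \( \{ \| \BB \xiv \| \le \yyc \} \), apply \eqref{expxiboA} there, bound \( \P\bigl( \| \BB \xiv \| > \yyc \bigr) \le 8.4\,\ex^{-\xxc} \) by evaluating \eqref{expxibogA} at \( \yy = \yyc \), and for \( \xx > \xxc \) invert the affine exponent of \eqref{expxibogA}. The large-deviation part of your argument is complete and correct.

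The step you defer (``granting this'') is, however, precisely where the statement as printed does not go through, and it needs to be carried out rather than granted. The two terms in \( (2 \vA \xx^{1/2}) \vee (6 \xx) \) cross at \( \xx = \vA^{2}/9 \), not at \( \xx = \vA/18 \). The first branch is unproblematic: \( \vA \ge \sqrt{2} > 1/2 \) gives \( \vA/18 \le \vA^{2}/9 \), so for \( \xx \le \vA/18 \) the maximum equals \( 2 \vA \xx^{1/2} \) and \( \zz(\xx,\BB) \) dominates the theorem's threshold. But on the intermediate range \( \vA/18 < \xx < \vA^{2}/9 \) the maximum is still \( 2 \vA \xx^{1/2} > 6 \xx \), so \( \zz(\xx,\BB) = \dimA + 6\xx \) lies \emph{strictly below} the threshold \( \dimA + (2 \vA \xx^{1/2}) \vee (6\xx) \) of \eqref{expxiboA} and the event containment you rely on fails; nor can the bound be rescued by re-optimizing \( \mu \) in \eqref{sumI23A}, since \( u = 6\xx \) yields at best \( 2\exp(-9\xx^{2}/\vA^{2}) \), which exceeds \( 2\ex^{-\xx} \) exactly when \( \xx < \vA^{2}/9 \). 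The split point \( \vA/18 \) is evidently a misprint for \( \vA^{2}/9 \) (compare Corollary~\ref{CTxivqLDA}, where the split \( \dimp/\kappa \) is precisely the crossover of \( \sqrt{\kappa \xx \dimp} \) and \( \kappa \xx \)); with that correction both branches coincide with \( \dimA + (2 \vA \xx^{1/2}) \vee (6\xx) \) and your argument closes. A further small inaccuracy: \( \gm^{2} \ge 2 \dimA \) does \emph{not} force \( \wwc^{2} \ge 2 \) --- the function \( w(1+w)/(1+w^{2})^{1/2} \) in \eqref{wc212A} equals \( \sqrt{2} \) at \( w = 1 \), so \( \gm^{2} = 2 \dimA \) gives \( \wwc = 1 \) and \( \wwc^{2}/(1+\wwc^{2}) = 1/2 \). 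This echoes a loose remark in the paper itself and does not affect your derivation, which only invokes the theorem's conclusions as stated, but it should not be presented as a consequence of the hypothesis.
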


\section{Rescaling and regularity condition}
\label{SLDQFr}
The result of Theorem~\ref{TxivqLDA} can be extended to a more general situation 
when the condition \eqref{expgamgm} is fulfilled for a vector \( \zetav \) rescaled 
by a matrix \( \VPc \).
More precisely, let the random \( \dimp \)-vector \( \zetav \) fulfills for some 
\( \dimp \times \dimp \) matrix \( \VPc \) the condition 
\begin{EQA}[c]
\label{expzetaclocz} 
\sup_{\gammav \in \R^{\dimp}} 
    \log \E \exp\Bigl( 
        \lambda \frac{\gammav^{\T} \zetav}{\| \VPc \gammav \|} 
    \Bigr) 
    \le 
    \nunu^{2} \lambda^{2} / 2,
    \qquad 
    |\lambda| \le \gm,
\end{EQA}
with some constants \( \gm > 0 \), \( \nunu \ge 1 \).
Again, a simple change of variables reduces the case of an arbitrary \( \nunu \ge 1 \)
to \( \nunu = 1 \).
Our aim is to bound the squared norm \( \| \DPc^{-1} \zetav \|^{2} \) of a vector 
\( \DPc^{-1} \zetav \) for another \( \dimp \times \dimp \) positive symmetric matrix 
\( \DPc^{2} \).
Note that condition \eqref{expzetaclocz} implies \eqref{expgamgm} for the rescaled 
vector \( \xiv = \VPc^{-1} \zetav \).
This leads to bounding the quadratic form 
\( \| \DPc^{-1} \VPc \xiv \|^{2} = \| \BB \xiv \|^{2} \) with 
\( \BB^{2} = \DPc^{-1} \VPc^{2} \DPc^{-1} \). 
It obviously holds
\begin{EQA}[c]
    \dimA 
    =
    \tr(\BB^{2})
    =
    \tr (\DPc^{-2} \VPc^{2}) .
\label{dimAVPDP}
\end{EQA}    
Now we can apply the result of Corollary~\ref{CTxivqLDAB}.

\begin{corollary}
\label{CTxivqLDDV}
Let \( \zetav \) fulfill \eqref{expzetaclocz} with 
some \( \VPc \) and \( \gm \).
Given \( \DPc \), define \( \BB^{2} = \DPc^{-1} \VPc^{2} \DPc^{-1} \), and let
\( \gm^{2} \ge 2 \dimA \).
Then it holds for \( \xx \le \xxc \) with \( \xxc \) from \eqref{xxcyycA}:
\begin{EQA}
\label{PzzxxpB}
    \P\bigl( \| \DPc^{-1} \zetav \|^{2} \ge \zz(\xx,\BB) \bigr)
    & \le &
    2 \ex^{-\xx} + 8.4 \ex^{-\xxc},
\label{zzxxppdBDV}
\end{EQA}    
with \( \zz(\xx,\BB) \) from \eqref{zzxxppdB}. 
For \( \xx > \xxc \)
\begin{EQA}
    \P\bigl( \| \DPc^{-1} \zetav \|^{2} \ge \zzc(\xx,\BB) \bigr)
    & \le &
    8.4 \ex^{-\xx},
    \qquad 
    \zzc(\xx,\BB)
    \eqdef 
    \bigl| \yyc + 2 (\xx - \xxc)/\gmc \bigr|^{2} .
\label{zzcxxppdBDV}
\end{EQA}    
\end{corollary}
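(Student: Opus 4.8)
The plan is to deduce this corollary from Corollary~\ref{CTxivqLDAB} by the change of variables \( \xiv = \VPc^{-1} \zetav \) indicated in the text preceding the statement. As noted after \eqref{expzetaclocz}, it suffices to treat \( \nunu = 1 \), the general case following by the same rescaling of \( \xiv \) and \( \gm \) used in \eqref{expgamgmm}. Thus the two things to check are (i) that \( \xiv = \VPc^{-1}\zetav \) obeys the basic moment condition \eqref{expgamgm}, and (ii) that \( \| \DPc^{-1} \zetav \|^{2} \) is \emph{exactly} a form \( \| \BB_{0} \xiv \|^{2} \) whose spectral invariants \( \dimA \), \( \vA \), \( \lambdaB \) coincide with those of the matrix \( \BB^{2} = \DPc^{-1} \VPc^{2} \DPc^{-1} \) appearing in the statement.

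For point (i), fix \( \gammav \) with \( \| \gammav \| \le \gm \) and write \( \gammav^{\T} \xiv = \gammav^{\T} \VPc^{-1} \zetav = (\VPc^{-1}\gammav)^{\T} \zetav \), using that \( \VPc \) is symmetric. Applying \eqref{expzetaclocz} in the direction \( \VPc^{-1}\gammav \) with \( \lambda = \| \gammav \| \), and noting \( \| \VPc (\VPc^{-1}\gammav) \| = \| \gammav \| \le \gm \), gives
\begin{EQA}[c]
    \log \E \exp\bigl( \gammav^{\T} \xiv \bigr)
    =
    \log \E \exp\Bigl(
        \| \gammav \| \,
        \frac{(\VPc^{-1}\gammav)^{\T} \zetav}{\| \VPc (\VPc^{-1}\gammav) \|}
    \Bigr)
    \le
    \| \gammav \|^{2}/2 ,
\end{EQA}
so \( \xiv \) satisfies \eqref{expgamgm} with the same \( \gm \).

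For point (ii), since \( \zetav = \VPc \xiv \) we have \( \| \DPc^{-1} \zetav \|^{2} = \| \DPc^{-1} \VPc \xiv \|^{2} = \| \BB_{0} \xiv \|^{2} \) with the symmetric square root \( \BB_{0} = (\VPc \DPc^{-2} \VPc)^{1/2} \). Writing \( A = \VPc \DPc^{-1} \) one has \( \BB_{0}^{2} = A A^{\T} \), whereas the matrix in the statement is \( \BB^{2} = \DPc^{-1} \VPc^{2} \DPc^{-1} = A^{\T} A \); since \( A A^{\T} \) and \( A^{\T} A \) share the same spectrum, the quantities \( \dimA = \tr(\BB_{0}^{2}) \), \( \vA^{2} = 2\tr(\BB_{0}^{4}) \) and \( \lambdaB = \lambda_{\max}(\BB_{0}^{2}) \) are the same whether computed from \( \BB_{0} \) or from \( \BB \). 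Consequently \( \wwc \), \( \muc \), \( \yyc \), \( \xxc \), \( \gmc \) and the thresholds \( \zz(\xx,\BB) \) of \eqref{zzxxppdB} and \( \zzc(\xx,\BB) \) are unaffected by the choice of representative (the same \( \lambdaB \)-normalization convention of Section~\ref{Sbqf} being inherited).

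It then remains to apply Corollary~\ref{CTxivqLDAB} to the vector \( \xiv \) and the matrix \( \BB_{0} \): the hypotheses hold since \( \xiv \) obeys \eqref{expgamgm} and \( \gm^{2} \ge 2 \dimA \), and its two conclusions translate into the asserted bounds \eqref{zzxxppdBDV} and \eqref{zzcxxppdBDV} because \( \| \BB_{0} \xiv \|^{2} = \| \DPc^{-1} \zetav \|^{2} \) while all constants match by the previous paragraph. The only genuinely non-routine point is this identification of spectra, which guarantees that the definition \( \BB^{2} = \DPc^{-1} \VPc^{2} \DPc^{-1} \) used in the statement produces the same invariants as the form actually being controlled; everything else is the moment-condition transfer and a direct invocation of Corollary~\ref{CTxivqLDAB}.
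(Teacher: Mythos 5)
Your proposal is correct and follows the same route the paper sketches in Section~\ref{SLDQFr}: transfer the moment condition to \( \xiv = \VPc^{-1}\zetav \) and invoke Corollary~\ref{CTxivqLDAB}. Your extra step identifying the spectra of \( \VPc\DPc^{-2}\VPc = AA^{\T} \) and \( \DPc^{-1}\VPc^{2}\DPc^{-1} = A^{\T}A \) is a worthwhile addition, since the paper simply writes \( \| \DPc^{-1}\VPc\xiv \|^{2} = \| \BB\xiv \|^{2} \) without noting that the matrix of that quadratic form is only similar to, not equal to, the stated \( \BB^{2} \).
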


In the \emph{regular} case with \( \DPc \ge \fis \VPc \) for some 
\( \fis > 0 \), one obtains 
\( \| \BB \|_{\infty} \le \fis^{-1} \) and
\begin{EQA}[c]
    \vA^{2}
    =
    2 \tr(\BB^{4})
    \le 
    2 \fis^{-2} \dimA .
\label{vAfisdimA}
\end{EQA}

\section{A chi-squared bound with norm-constraints}
\label{Schi2norm}
This section extends the results to the case when the bound \eqref{expgamgm} requires 
some other conditions than the \( \ell_{2} \)-norm of the vector \( \gammav \).
Namely, we suppose that 
\begin{EQA}[c]
    \log \E \exp\bigl( \gammav^{\T} \xiv \bigr) 
    \le 
    \| \gammav \|^{2}/2,
    \qquad 
    \gammav \in \R^{\dimp}, \,\, \| \gammav \|_{\norms} \le \gms ,
\label{expgamgmmn}
\end{EQA}
where \( \| \cdot \|_{\norms} \) is a norm which differs from the usual 
Euclidean norm.
Our driving example is given by the sup-norm case with 
\( \| \gammav \|_{\norms} \equiv \| \gammav \|_{\infty} \).
We are interested to check whether the previous results of Section~\ref{SchiquadE} 
still apply.
The answer depends on how massive the set 
\( \CA(\rg) = \{ \gammav: \| \gammav \|_{\norms} \le \rg \} \) is in terms of the standard Gaussian 
measure on \( \R^{\dimp} \). 
Recall that the quadratic norm \( \| \varepsilonv \|^{2} \) of a standard Gaussian 
vector \( \varepsilonv \) in \( \R^{\dimp} \) concentrates around \( \dimp \) 
at least for \( \dimp \) large. 
We need a similar concentration property for the norm \( \| \cdot \|_{\norms} \).
More precisely, we assume for a fixed \( \rs \) that 
\begin{EQA}[c]
    \P\bigl( \| \varepsilonv \|_{\norms} \le \rs \bigr)
    \ge 
    1/2,
    \qquad 
    \varepsilonv \sim \ND(0,\Id_{\dimp}) .
\label{epsuvgmr}
\end{EQA}    
This implies for any value \( \uus > 0 \) and all \( \uv \in \R^{\dimp} \) with 
\( \| \uv \|_{\norms} \le \uus \) that 
\begin{EQA}[c]
    \P\bigl( \| \varepsilonv - \uv \|_{\norms} \le \rs + \uus \bigr)
    \ge 
    1/2,
    \qquad 
    \varepsilonv \sim \ND(0,\Id_{\dimp}) .
\label{epsuvgmrr}
\end{EQA}    
For each \( \zz > \dimp \), consider 
\begin{EQA}[c]
    \mu(\zz) = (\zz - \dimp)/\zz .
\label{muzzdp}
\end{EQA}    
Given \( \uus \), denote by \( \zzs = \zzs(\uus) \) the root of the equation
\begin{EQA}[c]
    \frac{\gms}{\mu(\zzs)}  - \frac{\rs}{\mu^{1/2}(\zzs)} 
    = 
    \uus .
\label{gmmuyynn}
\end{EQA}    
One can easily see that this value exists and unique 
if \( \uus \ge \gms - \rs \) and it can be defined as the largest \( \zz \)
for which \( \frac{\gms}{\mu(\zz)}  - \frac{\rs}{\mu^{1/2}(\zz)} \ge \uus \).
Let \( \mus = \mu(\zzs) \) be the corresponding \( \mu \)-value.
Define also \( \xxs \) by 
\begin{EQA}[c]
    2 \xxs = \mus \zzs + \dimp \log(1 - \mus) .
\label{xxdgg}
\end{EQA}    
If \( \uus < \gms - \rs \), then set \( \zzs = \infty \), \( \xxs = \infty \).

\begin{theorem}
\label{TxivqLDrg}
Let a random vector \( \xiv \) in \( \R^{\dimp} \) fulfill \eqref{expgamgmmn}.
Suppose \eqref{epsuvgmr} and let, given \( \uus \), the value \( \zzs \) be defined 
by \eqref{gmmuyynn}.
Then it holds for any \( u > 0 \) 
\begin{EQA}
\label{logPmunuunc}
    \P\bigl( \| \xiv \|^{2} > \dimp + u, 
        \| \xiv \|_{\norms} \le \uus 
    \bigr)
    & \le &
    2 \exp\bigl\{ - (\dimp/2) \fmup(u) \bigr] \bigr\} .
\end{EQA}
yielding for \( \xx \le \xxs \) 
\begin{EQA}
    \P\bigl( 
        \| \xiv \|^{2} > \dimp + \sqrt{\kappa \xx \dimp} \vee (\kappa \xx), \,
        \| \xiv \|_{\norms} \le \uus
    \bigr)
    & \le &
    2 \exp( - \xx ),
\label{expxibon}
\end{EQA}    
where \( \kappa = 6.6 \).
Moreover, for \( \zz \ge \zzs \), it holds 
\begin{EQA}
    \P\bigl( \| \xiv \|^{2} > \zz, \,
        \| \xiv \|_{\norms} \le \uus \bigr)
    & \le &
    2 \exp\bigl\{ - \mus \zz/2 - (\dimp/2) \log(1 - \mus) \bigr\}
    \\
    & = &
    2 \exp\bigl\{ - \xxs - \gms (\zz - \zzs)/2 \bigr\}.
\label{Pexp2xitrg}
\end{EQA}
\end{theorem}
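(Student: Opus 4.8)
The plan is to run the exponential Chebyshev inequality on the quadratic tilt $\exp(\mu\|\xiv\|^{2}/2)$, to linearise the quadratic form by an auxiliary standard Gaussian vector $\varepsilonv\sim\ND(0,\Id_{\dimp})$ independent of $\xiv$, and to absorb the norm constraint through the concentration assumption \eqref{epsuvgmrr}. Everything reduces to the single exponential-moment estimate
\[
    \E\bigl[ \exp(\mu\|\xiv\|^{2}/2) \,;\, \|\xiv\|_{\norms}\le\uus \bigr]
    \le
    2\,(1-\mu)^{-\dimp/2},
    \qquad 0<\mu\le\mus .
\]
Granting this, exponential Chebyshev gives at once $\P\bigl(\|\xiv\|^{2}>\zz,\,\|\xiv\|_{\norms}\le\uus\bigr)\le 2\exp\{-\mu\zz/2-(\dimp/2)\log(1-\mu)\}$ for every admissible $\mu$, and the three assertions of the theorem follow by choosing $\mu$.

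To prove the displayed estimate I would start from the Gaussian identity $\exp(\mu\|\xiv\|^{2}/2)=\E_{\varepsilonv}\exp(\sqrt{\mu}\,\varepsilonv^{\T}\xiv)$ and rewrite it by the Cameron--Martin shift $\E_{\varepsilonv}[g(\varepsilonv)\exp(h^{\T}\varepsilonv)]=\exp(\|h\|^{2}/2)\,\E_{\varepsilonv}g(\varepsilonv+h)$ applied with $h=\sqrt{\mu}\,\xiv$ and $g=\mathbf 1_{G}$, where $G=\{\gammav:\|\gammav\|_{\norms}\le\gms/\sqrt{\mu}\}$ is exactly the region on which \eqref{expgamgmmn} may be used for $\gammav=\sqrt{\mu}\,\varepsilonv$. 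This yields, for every fixed $\xiv$ with $\|\xiv\|_{\norms}\le\uus$,
\[
    \E_{\varepsilonv}\bigl[\mathbf 1_{G}(\varepsilonv)\exp(\sqrt{\mu}\,\xiv^{\T}\varepsilonv)\bigr]
    =
    \exp(\mu\|\xiv\|^{2}/2)\,\P_{\varepsilonv}\bigl(\|\varepsilonv+\sqrt{\mu}\,\xiv\|_{\norms}\le\gms/\sqrt{\mu}\bigr).
\]
Applying \eqref{epsuvgmrr} with free radius $\sqrt{\mu}\,\uus$ and shift $\uv=-\sqrt{\mu}\,\xiv$ (legitimate since $\|\uv\|_{\norms}\le\sqrt{\mu}\,\uus$) shows this probability is at least $1/2$ as soon as $\rs+\sqrt{\mu}\,\uus\le\gms/\sqrt{\mu}$, i.e. $\sqrt{\mu}\,\rs+\mu\uus\le\gms$; by the defining equation \eqref{gmmuyynn} of $\zzs$ this last inequality is precisely $\mu\le\mus$. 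Hence on $\{\|\xiv\|_{\norms}\le\uus\}$ one has the pointwise bound $\exp(\mu\|\xiv\|^{2}/2)\le 2\,\E_{\varepsilonv}[\mathbf 1_{G}(\varepsilonv)\exp(\sqrt{\mu}\,\xiv^{\T}\varepsilonv)]$.

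Taking $\E_{\xiv}$ over the event $\{\|\xiv\|_{\norms}\le\uus\}$, dropping the restriction (the integrand is nonnegative) and using Fubini, I obtain the upper bound $2\,\E_{\varepsilonv}[\mathbf 1_{G}(\varepsilonv)\,\E_{\xiv}\exp(\sqrt{\mu}\,\varepsilonv^{\T}\xiv)]$. On $G$ the vector $\gammav=\sqrt{\mu}\,\varepsilonv$ satisfies $\|\gammav\|_{\norms}\le\gms$, so \eqref{expgamgmmn} bounds the inner expectation by $\exp(\mu\|\varepsilonv\|^{2}/2)$; discarding $\mathbf 1_{G}\le1$ and evaluating $\E_{\varepsilonv}\exp(\mu\|\varepsilonv\|^{2}/2)=(1-\mu)^{-\dimp/2}$ finishes the estimate. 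It then remains to select $\mu$. For $\zz\le\zzs$ the choice $\mu=\mu(\zz)=(\zz-\dimp)/\zz\le\mus$ from \eqref{muzzdp} reproduces the Gaussian exponent $-(\dimp/2)\fmup\bigl((\zz-\dimp)/\dimp\bigr)$ of Theorem~\ref{TxivG}, which is \eqref{logPmunuunc}, and the passage to the $\sqrt{\kappa\xx\dimp}\vee(\kappa\xx)$ form \eqref{expxibon} with $\kappa=6.6$ is the same elementary inequality for $\fmup$ used in Theorem~\ref{TxivG}. For $\zz\ge\zzs$ the largest admissible value $\mu=\mus$ gives the first line of \eqref{Pexp2xitrg}, and since the resulting exponent is affine in $\zz$ and equals $-\xxs$ at $\zz=\zzs$ by \eqref{xxdgg}, it rewrites as the second line.

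The main obstacle is the shift-plus-concentration device in the middle step: letting the auxiliary Gaussian be integrated only over the admissible region $G$ at the cost of a single factor $2$, and verifying that the feasibility threshold $\sqrt{\mu}\,\rs+\mu\uus\le\gms$ coincides exactly with $\mu\le\mus$ via \eqref{gmmuyynn}. Once this is in place, the Gaussian moment evaluation, the use of Fubini, and the optimisation over $\mu$ are all routine.
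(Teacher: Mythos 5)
Your proof is correct and takes essentially the same route as the paper: the central estimate \( \E\bigl[\exp(\mu\|\xiv\|^{2}/2)\,;\,\|\xiv\|_{\norms}\le\uus\bigr]\le 2(1-\mu)^{-\dimp/2} \) for \( \mu\le\mus \) is exactly the paper's Lemma~\ref{Lexpxigm} (there the Gaussian smoothing is written as an explicit integral \( c_{p}\int\exp(\gammav^{\T}\xiv-\|\gammav\|^{2}/(2\mu))\Ind(\|\gammav\|_{\norms}\le\gms)\,d\gammav \), which after the substitution \( \gammav=\sqrt{\mu}\,\varepsilonv \) is precisely your Cameron--Martin computation, including the factor \( 2 \) from \eqref{epsuvgmrr} and the feasibility condition \( \gms/\mu-\rs/\mu^{1/2}\ge\uus \) being equivalent to \( \mu\le\mus \)). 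The concluding optimisation, \( \mu=\mu(\zz) \) for \( \zz\le\zzs \) and \( \mu=\mus \) for \( \zz\ge\zzs \), also matches the paper's argument.
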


It is easy to check that the result continues to hold for the norm of
\( \Pi \xiv \) for a given sub-projector \( \Pi \) in \( \R^{\dimp} \) 
satisfying \( \Pi = \Pi^{\T} \), \( \Pi^{2} \le \Pi \).
As above, denote \( \dimA \eqdef \tr (\Pi^{2}) \), 
\( \vA^{2} \eqdef 2 \tr (\Pi^{4}) \).
Let \( \rs \) be fixed to ensure
\begin{EQA}[c]
    \P\bigl( \| \Pi \varepsilonv \|_{\norms} \le \rs \bigr)
    \ge 
    1/2,
    \qquad 
    \varepsilonv \sim \ND(0,\Id_{\dimp}) .
\label{epsuvgmrPi}
\end{EQA}
The next result is stated for \( \gms \ge \rs + \uus \), which simplifies the 
formulation.

\begin{theorem}
\label{TxivqLDrgPi}
Let a random vector \( \xiv \) in \( \R^{\dimp} \) fulfill \eqref{expgamgmmn}
and \( \Pi \) follows \( \Pi = \Pi^{\T} \), \( \Pi^{2} \le \Pi \).
Let some \( \uus \) be fixed.
Then for any \( \mus \le 2/3 \) with \( \gms \mus^{-1} - \rs \mus^{-1/2} \ge \uus \), 
\begin{EQA}
    \E \exp\Bigl\{ \frac{\mus}{2} (\| \Pi \xiv \|^{2} - \dimA) \Bigr\}
        \Ind\bigl( \| \Pi^{2} \xiv \|_{\norms} \le \uus \bigr)
    & \le &
    2 \exp( \mus^{2} \vA^{2}/4 ) ,
\label{EexpmusvA}
\end{EQA}    
where \( \vA^{2} = 2 \tr (\Pi^{4}) \).
Moreover, if \( \gms \ge \rs + \uus \),
then for any \( \zz \ge 0 \) 
\begin{EQA}
\label{logPmunuuncPi}
    && \nquad
    \P\bigl( \| \Pi \xiv \|^{2} > \zz, 
        \| \Pi^{2} \xiv \|_{\norms} \le \uus 
    \bigr)
    \\
    & \le &
    \P\bigl( 
        \| \Pi \xiv \|^{2} > \dimA + (2 \vA \xx^{1/2}) \vee (6 \xx), \,
        \| \Pi^{2} \xiv \|_{\norms} \le \uus
    \bigr)
    \le 
    2 \exp( - \xx ).
\label{expxiboPi}
\end{EQA}    

\end{theorem}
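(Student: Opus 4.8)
The plan is to first establish the exponential-moment inequality \eqref{EexpmusvA} and then deduce the tail bound \eqref{expxiboPi} from it by a Chernoff argument, exactly as the Gaussian bound \eqref{PtttLGA} follows from the Laplace transform of a Gaussian quadratic form. The whole argument runs parallel to the proof of Theorem~\ref{TxivqLDrg}, the only genuinely new features being the projector \( \Pi \) and, in the indicator, the appearance of \( \Pi^{2} \) rather than \( \Pi \). Fix \( \mu = \mus \le 2/3 \) and introduce a vector \( \varepsilonv \sim \ND(0,\Id_{\dimp}) \) independent of \( \xiv \). Since \( \Pi = \Pi^{\T} \) and \( \E_{\varepsilon}\exp\bigl(\sqrt{\mu}\,(\Pi\varepsilonv)^{\T}\xiv\bigr) = \exp\bigl(\frac{\mu}{2}\|\Pi\xiv\|^{2}\bigr) \) for every fixed \( \xiv \), multiplying by the indicator and applying Fubini turns \( I \eqdef \E\exp\bigl(\frac{\mu}{2}\|\Pi\xiv\|^{2}\bigr)\Ind\bigl(\|\Pi^{2}\xiv\|_{\norms}\le\uus\bigr) \) into \( \E_{\varepsilon}\,j(\varepsilonv) \), where \( j(\varepsilonv) = \E_{\xi}\exp\bigl(\sqrt{\mu}\,(\Pi\varepsilonv)^{\T}\xiv\bigr)\Ind\bigl(\|\Pi^{2}\xiv\|_{\norms}\le\uus\bigr) \). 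The gain is that the inner \( \xi \)-integral is now a Laplace transform governed by \eqref{expgamgmmn}, applied to \( \gammav = \sqrt{\mu}\,\Pi\varepsilonv \).

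Next I would split the outer expectation along the event \( G = \{\|\Pi\varepsilonv\|_{\norms}\le\rs\} \), which by \eqref{epsuvgmrPi} has probability at least \( 1/2 \). On \( G \) the argument of the condition is feasible, since \( \|\gammav\|_{\norms} = \sqrt{\mu}\,\|\Pi\varepsilonv\|_{\norms}\le\sqrt{\mu}\,\rs\le\gms \), the last step being the hypothesis \( \gms\mus^{-1}-\rs\mus^{-1/2}\ge\uus\ge 0 \). Dropping the indicator and applying \eqref{expgamgmmn} gives \( j(\varepsilonv)\le\exp\bigl(\frac{\mu}{2}\|\Pi\varepsilonv\|^{2}\bigr) \), so the \( G \)-part is at most \( \E_{\varepsilon}\exp\bigl(\frac{\mu}{2}\|\Pi\varepsilonv\|^{2}\bigr) = \det\bigl(\Id_{\dimp}-\mu\Pi^{2}\bigr)^{-1/2} \). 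Expanding the determinant over the eigenvalues \( \lambda_{i}\in[0,1] \) of \( \Pi^{2} \) and using \( -\log(1-x)\le x + x^{2} \) on \( [0,2/3] \) (legitimate because \( \mu\lambda_{i}\le 2/3 \)) bounds this by \( \exp\bigl(\frac{\mu}{2}\dimA + \frac{\mu^{2}}{2}\tr(\Pi^{4})\bigr) = \exp\bigl(\frac{\mu}{2}\dimA + \frac{\mu^{2}\vA^{2}}{4}\bigr) \), the clean Gaussian-type term.

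The main obstacle is the complementary region \( G^{c} \), where \( \gammav = \sqrt{\mu}\,\Pi\varepsilonv \) need not be feasible and the naive bound on \( j(\varepsilonv) \) is useless. Here I would exploit the indicator through the identity
\[
    \sqrt{\mu}\,(\Pi\varepsilonv)^{\T}\xiv = \sqrt{\mu}\,\bigl(\Pi(\varepsilonv-\Pi\uv)\bigr)^{\T}\xiv + \sqrt{\mu}\,\uv^{\T}\Pi^{2}\xiv,
\]
valid for any \( \uv \). On the constraint set the last summand is at most \( \sqrt{\mu}\,\|\uv\|_{\norms}^{*}\uus \), where \( \|\cdot\|_{\norms}^{*} \) is the dual norm; this is precisely why the indicator must be stated in terms of \( \Pi^{2}\xiv \). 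Choosing \( \uv \) so that \( \Pi^{2}\uv \) retracts \( \Pi\varepsilonv \) back into the feasible set \( \{\sqrt{\mu}\,\|\Pi\,\cdot\,\|_{\norms}\le\gms\} \) makes the first summand amenable to \eqref{expgamgmmn}, the retraction cost being absorbed by the margin \( \gms-\sqrt{\mu}\,\rs\ge\mu\uus \) supplied by the hypothesis. Read as a shift of the Gaussian integration variable, this maps the light region \( G^{c} \) (of measure \( \le 1/2 \)) onto the bulk, so that its contribution is again at most \( \exp\bigl(\frac{\mu}{2}\dimA + \frac{\mu^{2}\vA^{2}}{4}\bigr) \); adding the two regions gives \( I\le 2\exp\bigl(\frac{\mu}{2}\dimA + \frac{\mu^{2}\vA^{2}}{4}\bigr) \), which is \eqref{EexpmusvA}. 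Controlling the interplay between the retraction \( \uv \), the density distortion of the shift, and the feasibility margin — and checking that the constant \( 2 \) is not lost — is the delicate point of the whole proof.

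Finally, \eqref{expxiboPi} follows from \eqref{EexpmusvA} by the exponential Markov inequality: for \( t>0 \),
\[
    \P\bigl(\|\Pi\xiv\|^{2}>\dimA+t,\ \|\Pi^{2}\xiv\|_{\norms}\le\uus\bigr)\le 2\exp\Bigl(\frac{\mu^{2}\vA^{2}}{4}-\frac{\mu t}{2}\Bigr).
\]
Optimising over \( \mu\in(0,2/3] \) under the feasibility constraint reproduces the two regimes exactly as in the Gaussian Theorem~\ref{TexpbLGA}: the interior optimum \( \mu = t/\vA^{2} \) yields the sub-Gaussian threshold \( t = 2\vA\xx^{1/2} \) for \( \xx\le\vA^{2}/9 \), while the boundary value \( \mu = 2/3 \) yields the linear threshold \( t = 6\xx \) for larger \( \xx \). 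Substituting \( t = (2\vA\xx^{1/2})\vee(6\xx) \) then gives the claimed bound \( 2\exp(-\xx) \).
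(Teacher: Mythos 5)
Your overall architecture (Gaussian linearization of the quadratic exponent via an auxiliary \( \varepsilonv \sim \ND(0,\Id_{\dimp}) \), then a Chernoff step with the two regimes \( \mu = t/\vA^{2} \) and \( \mu = 2/3 \)) is the right one, and your final Markov-inequality step correctly derives \eqref{expxiboPi} from \eqref{EexpmusvA}. The gap is in the proof of \eqref{EexpmusvA} itself, precisely at the point you flag as delicate: the contribution of \( G^{c} = \{ \| \Pi \varepsilonv \|_{\norms} > \rs \} \). The retraction device does not close. The price of the shift is \( \sqrt{\mus}\, \uv^{\T} \Pi^{2} \xiv \le \sqrt{\mus}\, \| \uv \|_{\norms}^{*} \uus \), measured in the dual norm of \( \uv \), while the feasibility requirement bears on \( \| \Pi (\varepsilonv - \Pi \uv) \|_{\norms} \); these are not comparable (for \( \| \cdot \|_{\norms} = \| \cdot \|_{\infty} \) the dual norm is \( \ell_{1} \), which can exceed the sup-norm by a factor \( \dimp \)). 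Worse, on \( G^{c} \) the required retraction distance grows without bound with \( \| \Pi \varepsilonv \|_{\norms} \), whereas the available margin \( \gms - \sqrt{\mus}\, \rs \) is a fixed constant, so the infeasible region cannot be mapped onto the bulk at bounded cost; that \( \P(G^{c}) \le 1/2 \) does not help, because the integrand there is unbounded. Your \( G \)-part already exhausts the allowance \( \det(\Id_{\dimp} - \mus \Pi^{2})^{-1/2} \), so the entire second factor of \( 2 \) must come from \( G^{c} \), and no actual bound is produced there.

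The paper (arguments of Lemmas~\ref{Lexpxig} and \ref{Lexpxigm}) avoids \( G^{c} \) altogether by inverting the logic: instead of splitting the full \( \varepsilonv \)-expectation, one truncates it to the feasible set from the start and shows the truncation costs at most a factor \( 2 \). Completing the square gives
\[
    \E_{\varepsilonv} \Bigl[ \exp\bigl( \sqrt{\mus}\, (\Pi \varepsilonv)^{\T} \xiv \bigr)
        \Ind\bigl( \sqrt{\mus} \| \Pi \varepsilonv \|_{\norms} \le \gms \bigr) \Bigr]
    =
    \exp\Bigl( \frac{\mus}{2} \| \Pi \xiv \|^{2} \Bigr)
    \P\bigl( \| \Pi \varepsilonv + \sqrt{\mus}\, \Pi^{2} \xiv \|_{\norms} \le \gms/\sqrt{\mus} \bigr),
\]
and on the event \( \| \Pi^{2} \xiv \|_{\norms} \le \gms/\mus - \rs/\mus^{1/2} \) — which contains \( \{ \| \Pi^{2} \xiv \|_{\norms} \le \uus \} \) by the hypothesis \( \gms \mus^{-1} - \rs \mus^{-1/2} \ge \uus \) — the shifted-ball probability is at least \( 1/2 \) by the triangle inequality and \eqref{epsuvgmrPi}. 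This is where the indicator on \( \Pi^{2} \xiv \) and the concentration assumption actually enter. One then takes \( \E_{\xiv} \), applies Fubini and \eqref{expgamgmmn} (legitimate because the \( \varepsilonv \)-integral is already restricted to \( \| \sqrt{\mus} \Pi \varepsilonv \|_{\norms} \le \gms \)), obtains \( 2 \det(\Id_{\dimp} - \mus \Pi^{2})^{-1/2} \), and concludes with \( -\log(1-t) \le t + t^{2} \) for \( t \le 2/3 \). Your computation on \( G \) is exactly the ``drop the indicator and apply \eqref{expgamgmmn}'' half of this argument; without the reversal, the other half is missing.
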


\section{A bound for the \( \ell_{2} \)-norm under Bernstein conditions}
\label{SBernstqf}
For comparison, we specify the results to the case considered 
recently in \cite{Ba2010}.
Let \( \zetav \) be a random vector in \( \R^{n} \) whose components \( \zeta_{i} \) 
are independent and satisfy the Bernstein type conditions:
for all \( |\lambda| < c^{-1} \)
\begin{EQA}[c]
\label{xiBernybB}
    \log \E e^{\lambda \zeta_{i}}
    \le 
    \frac{\lambda^{2} \sigma^{2}}{1 - c |\lambda|} . 
\end{EQA}    
Denote \( \xiv = \zetav/(2\sigma) \) and consider 
\( \| \gammav \|_{\norms} = \| \gammav \|_{\infty} \).
Fix \( \gms = \sigma/c \). 
If \( \| \gammav \|_{\norms} \le \gms \), then 
\( 1 - c \gamma_{i}/ (2\sigma) \ge 1/2 \) and
\begin{EQA}[c]
    \log \E \exp\bigl( \gammav^{\T} \xiv \bigr)
    \le 
    \sum_{i} \log \E \exp\Bigl( \frac{\gamma_{i} \zeta_{i}}{2\sigma} \Bigr)
    \le 
    \sum_{i} \frac{|\gamma_{i}/(2\sigma)|^{2} \sigma^{2}}{1 - c \gamma_{i}/(2 \sigma)}
    \le 
    \| \gammav \|^{2}/2 .
\label{logexpggi}
\end{EQA}    
Let also \( S \) be some linear subspace of \( \R^{n} \) with dimension \( \dimA \) 
and \( \Pi_{S} \) denote the projector on \( S \). 
For applying the result of Theorem~\ref{TxivqLDrg}, the value \( \rs \) has to be fixed.
We use that 
the infinity norm \( \| \varepsilonv \|_{\infty} \) concentrates 
around \( \sqrt{2 \log \dimp} \).

\begin{lemma}
\label{LBeboundrs}
It holds for a standard normal vector \( \varepsilonv \in \R^{\dimp} \) 
with \( \rs = \sqrt{2 \log\dimp} \)
\begin{EQA}[c]
    \P\bigl( \| \varepsilonv \|_{\norms} \le \rs \bigr)
    \ge 
    1/2 .
\label{epsvBe}
\end{EQA}    
\end{lemma}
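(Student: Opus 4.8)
The plan is to bound the complementary probability \( \P\bigl( \| \varepsilonv \|_{\norms} > \rs \bigr) \) by \( 1/2 \), where \( \| \cdot \|_{\norms} = \| \cdot \|_{\infty} \) and \( \rs = \sqrt{2 \log \dimp} \). Since \( \| \varepsilonv \|_{\infty} = \max_{i \le \dimp} |\varepsilon_{i}| \), the first step is a union bound over the \( \dimp \) coordinates, combined with the symmetry of the standard normal law:
\begin{EQA}[c]
    \P\bigl( \| \varepsilonv \|_{\norms} > \rs \bigr)
    \le
    \sum_{i=1}^{\dimp} \P\bigl( |\varepsilon_{i}| > \rs \bigr)
    =
    2 \dimp \, \P\bigl( \varepsilon_{1} > \rs \bigr) .
\end{EQA}
This reduces the statement to a single Gaussian tail, and notably does not even use independence of the components.

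Next I would insert the classical Gaussian tail estimate \( \P(\varepsilon_{1} > t) \le (t \sqrt{2\pi})^{-1} \exp(-t^{2}/2) \), valid for \( t > 0 \). The choice \( \rs = \sqrt{2 \log \dimp} \) is made precisely so that \( \exp(-\rs^{2}/2) = 1/\dimp \) cancels the factor \( \dimp \) produced by the union bound. Substituting gives
\begin{EQA}[c]
    \P\bigl( \| \varepsilonv \|_{\norms} > \rs \bigr)
    \le
    \frac{2 \dimp}{\rs \sqrt{2\pi}} \cdot \frac{1}{\dimp}
    =
    \frac{2}{\sqrt{2\pi} \, \sqrt{2 \log \dimp}}
    =
    \frac{1}{\sqrt{\pi \log \dimp}} .
\end{EQA}
The right-hand side is at most \( 1/2 \) as soon as \( \pi \log \dimp \ge 4 \), that is, \( \dimp \ge \exp(4/\pi) \approx 3.57 \), which covers all \( \dimp \ge 4 \).

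The main (and essentially only) obstacle is that the union bound is lossy and the target constant \( 1/2 \) is only barely met, so the clean estimate above leaves out the smallest dimensions. For \( \dimp \in \{ 2, 3 \} \) I would verify the claim directly: by independence \( \P\bigl( \| \varepsilonv \|_{\norms} \le \rs \bigr) = \bigl[ \P(|\varepsilon_{1}| \le \rs) \bigr]^{\dimp} \), and a numerical evaluation with \( \rs = \sqrt{2 \log \dimp} \) gives values \( \approx 0.58 \) for \( \dimp = 2 \) and \( \approx 0.64 \) for \( \dimp = 3 \), both exceeding \( 1/2 \). The degenerate case \( \dimp = 1 \) yields \( \rs = 0 \) and is excluded, so the statement is understood for \( \dimp \ge 2 \). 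Combining the union-bound estimate for \( \dimp \ge 4 \) with the direct verification for \( \dimp \in \{ 2, 3 \} \) establishes \eqref{epsvBe}.
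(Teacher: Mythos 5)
Your proof is correct and follows essentially the same route as the paper: a union bound over the \( \dimp \) coordinates reducing the claim to \( \dimp \, \P\bigl( |\varepsilon_{1}| > \sqrt{2\log\dimp} \bigr) \le 1/2 \), which the paper simply asserts and you justify via the standard Gaussian tail estimate. Your extra care with the small dimensions \( \dimp \in \{2,3\} \) (where the crude bound \( 1/\sqrt{\pi\log\dimp} \) exceeds \( 1/2 \)) is a detail the paper glosses over, and your numerical check there is accurate.
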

\begin{proof}
By definition
\begin{EQA}[c]
    \P\bigl( \| \varepsilonv \|_{\norms} > \rs \bigr)
    \le 
    \P\bigl( \| \varepsilonv \|_{\infty} > \sqrt{2 \log \dimp} \bigr)
    \le 
    \dimp \P\bigl( |\varepsilon_{1}| > \sqrt{2 \log \dimp} \bigr)
    \le 
    1/2
\label{PsdimpABe}
\end{EQA}    
as required.
\end{proof}

Now the general bound of Theorem~\ref{TxivqLDrg} is applied to bounding the norm of
\( \| \Pi_{S} \xiv \| \). 
For simplicity of formulation we assume that \( \gms \ge \uus + \rs \).

\begin{theorem}
\label{Tchi2Bern}
Let \( S \) be some linear subspace of \( \R^{n} \) with dimension \( \dimA \).
Let \( \gms \ge \uus + \rs \).
If the coordinates \( \zeta_{i} \) of \( \zetav \) are independent and satisfy 
\eqref{xiBernybB}, then for all \( \xx \), 
\begin{EQA}
    \P\bigl( 
        (4 \sigma^{2})^{-1} \| \Pi_{S} \zetav \|^{2} 
        > \dimA + \sqrt{\kappa \xx \dimA} \vee (\kappa \xx), \,
        \| \Pi_{S} \zetav \|_{\infty} \le 2 \sigma \uus
    \bigr)
    & \le &
    2 \exp( - \xx ),
\label{expxiboBe}
\end{EQA}
\end{theorem}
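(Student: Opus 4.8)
The plan is to reduce the Bernstein setting to the sharp $\ell_{2}$-bound already proved in Theorem~\ref{TxivqLDrg}, applied inside the $\dimA$-dimensional range of $\Pi_{S}$. First I set $\xiv = \zetav/(2\sigma)$, so that by the computation \eqref{logexpggi} the rescaled vector obeys \eqref{expgamgmmn} with $\|\cdot\|_{\norms} = \|\cdot\|_{\infty}$ and $\gms = \sigma/c$. Since $\Pi_{S}$ is the orthogonal projector onto the $\dimA$-dimensional subspace $S$, I write $\Pi_{S} = U U^{\T}$ with $U^{\T} U = \Id_{\dimA}$, so that $(4\sigma^{2})^{-1} \| \Pi_{S} \zetav \|^{2} = \| \Pi_{S} \xiv \|^{2} = \| U^{\T} \xiv \|^{2}$ and the constraint $\| \Pi_{S} \zetav \|_{\infty} \le 2\sigma \uus$ is exactly $\| \Pi_{S} \xiv \|_{\infty} = \| U (U^{\T} \xiv) \|_{\infty} \le \uus$. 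The key point is that passing to the genuine Euclidean norm $\| U^{\T} \xiv \|^{2}$ on $\R^{\dimA}$ lets me invoke the $\ell_{2}$ bound of Theorem~\ref{TxivqLDrg} with its constant $\kappa = 6.6$, rather than the coarser quadratic-form bound of Theorem~\ref{TxivqLDrgPi}, for which the leading factor $2\vA = 2\sqrt{2\dimA}$ in front of $\xx^{1/2}$ would exceed the sharper $\sqrt{\kappa \dimA}$.

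Next I verify that the $\dimA$-vector $U^{\T} \xiv$ satisfies the hypotheses of Theorem~\ref{TxivqLDrg}. For $\thetav \in \R^{\dimA}$ one has $\thetav^{\T} U^{\T} \xiv = (U \thetav)^{\T} \xiv$ and $\| U \thetav \| = \| \thetav \|$, so \eqref{expgamgmmn} transfers verbatim, now with the norm $\thetav \mapsto \| U \thetav \|_{\infty}$ and the same $\gms$. It remains to check the concentration condition \eqref{epsuvgmr} for this norm, i.e. $\P\bigl( \| U \varepsilonv' \|_{\infty} \le \rs \bigr) \ge 1/2$ for $\varepsilonv' \sim \ND(0,\Id_{\dimA})$. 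Here $U \varepsilonv' \sim \ND(0,\Pi_{S})$, whose coordinates are centered Gaussians with variances $(\Pi_{S})_{ii} \le 1$; hence each coordinate tail is dominated by that of a standard normal and the union bound underlying Lemma~\ref{LBeboundrs} applies unchanged, with the same $\rs$. Thus \eqref{epsuvgmr} holds, and the constraint $\| U (U^{\T} \xiv) \|_{\infty} = \| \Pi_{S} \xiv \|_{\infty} \le \uus$ matches the constraint norm in Theorem~\ref{TxivqLDrg}.

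Finally I apply Theorem~\ref{TxivqLDrg} to $U^{\T} \xiv$ with $\dimp$ replaced by $\dimA$. The hypothesis $\gms \ge \uus + \rs$ means $\uus \le \gms - \rs$, so by the convention stated after \eqref{xxdgg} we have $\zzs = \xxs = \infty$; consequently the sub-Gaussian bound \eqref{expxibon} is valid for every $\xx > 0$, which is exactly why the statement holds for all $\xx$ with no large-deviation restriction. Translating back through $\| U^{\T} \xiv \|^{2} = (4\sigma^{2})^{-1} \| \Pi_{S} \zetav \|^{2}$ and $\| \Pi_{S} \xiv \|_{\infty} = (2\sigma)^{-1} \| \Pi_{S} \zetav \|_{\infty}$ yields \eqref{expxiboBe}. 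I expect the only genuinely delicate step to be the verification of the concentration condition after projection: one must use that $\Pi_{S}$ only shrinks the coordinatewise variances, so that the union bound of Lemma~\ref{LBeboundrs} survives with an unchanged $\rs$, the remaining manipulations being the routine bookkeeping of the rescaling and of the orthonormal embedding $U$.
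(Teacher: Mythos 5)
Your proof is correct and follows the route the paper itself indicates (the paper gives no separate proof of Theorem~\ref{Tchi2Bern}, only the reduction sketched in Section~\ref{SBernstqf}): rescale by \( 2\sigma \) to obtain \eqref{expgamgmmn} via \eqref{logexpggi}, fix \( \rs \) by Lemma~\ref{LBeboundrs}, and apply Theorem~\ref{TxivqLDrg} with \( \dimp \) replaced by \( \dimA \) after restricting to the subspace \( S \). Your explicit orthonormal-embedding step \( \Pi_{S} = U U^{\T} \), together with the check that the transported constraint norm \( \thetav \mapsto \| U \thetav \|_{\infty} \) still satisfies \eqref{epsuvgmr} because the coordinates of \( U \varepsilonv' \sim \ND(0,\Pi_{S}) \) have variances at most one, is precisely the detail the paper leaves implicit, and it is what yields the constant \( \kappa = 6.6 \) in \eqref{expxiboBe} rather than the factor \( 2 \vA = \sqrt{8 \dimA} \) that would come from invoking Theorem~\ref{TxivqLDrgPi} instead.
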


The bound of \cite{Ba2010} reads
\begin{EQA}[c]
\label{Bernmainyb}
    \P\biggl( 
        \| \Pi_{S} \zetav \|_{2} 
        > \bigl( 3 \sigma \, \vee \, \sqrt{6 c u} \bigr) \sqrt{\xx + 3 \dimA} , \,\,
        \| \Pi_{S} \zetav \|_{\infty} \le 2 \sigma \uus
    \biggr)  
    \le 
    e^{- \xx} .
\end{EQA}
As expected, in the region \( \xx \le \xxc \) of Gaussian approximation, the bound 
of Baraud is not sharp and actually quite rough.

\appendix
\section{Proof of Theorem~\ref{TxivG}}
The proof utilizes the following well known fact:
for \( \mu < 1 \)
\begin{EQA}[c]
    \log \E \exp\bigl( \mu \| \xiv \|^{2}/2 \bigr)
    =
    - 0.5 \dimp \log(1-\mu) .
\label{fmupG}
\end{EQA}    
It can be obtained by straightforward calculus. 
Now consider any \( u > 0 \). 
By the exponential Chebyshev inequality
\begin{EQA}
\label{logPmunulg}
    \P\bigl( \| \xiv \|^{2} > \dimp + u \bigr)
    & \le &
    \exp\bigl\{ - \mu (\dimp + u)/2 \bigr\} 
    \E\exp \bigl( \mu \| \xiv \|^{2}/2 \bigr)
    \\
    & = &
    \exp\bigl\{ - \mu (\dimp + u)/2 - (\dimp/2) \log(1 - \mu) \bigr\} .
\end{EQA}
It is easy to see that the value \( \mu = u/(u+\dimp) \) maximizes  
\( \mu (\dimp + u) + \dimp \log(1 - \mu) \) w.r.t. \( \mu \) yielding 
\begin{EQA}[c]
    \mu (\dimp + u) - \dimp \log(1 - \mu)
    =
    u - \dimp \log( 1 + u/\dimp ).
\label{logPmulg}
\end{EQA}    
Further we use that 
\( x - \log(1+x) \ge a_{0} x^{2} \) for \( x \le 1 \)
and \( x - \log(1+x) \ge a_{0} x \) for \( x > 1 \) with 
\( a_{0} = 1 - \log(2) \ge 0.3 \).
This implies with \( x = u/\dimp \) for 
\( u = \sqrt{\kappa \xx \dimp} \) or \( u = \kappa \xx \) and 
\( \kappa = 2/a_{0} < 6.6 \) that
\begin{EQA}
    \P\bigl( \| \xiv \|^{2} \ge \dimp + \sqrt{\kappa \xx \dimp} \vee (\kappa \xx)
    \bigr)
    & \le &
    \exp( - \xx ) 
\label{Ppzv}
\end{EQA}    
as required.

\section{Proof of Theorem~\ref{TexpbLGA}}
The matrix \( \BB^{2} \) can be represented as \( U^{\T} \diag(a_{1},\ldots,a_{\dimp}) U \)
for an orthogonal matrix \( U \). 
The vector \( \tilde{\xiv} = U \xiv \) is also standard normal and 
\( \| \BB \xiv \|^{2} = \tilde{\xiv}^{\T} U \BB^{2} U^{\T} \tilde{\xiv} \).
This means that one can reduce the situation to the case of a diagonal matrix 
\( \BB^{2} = \diag(a_{1},\ldots,a_{\dimp}) \).
We can also assume without loss of generality that 
\( a_{1} \ge a_{2} \ge \ldots \ge a_{\dimp} \).
The expressions for the quantities \( \dimA \) and \( \vA^{2} \) simplifies to
\begin{EQA}
    \dimA
    &=&
    \tr(\BB^{2})
    =
    a_{1} + \ldots + a_{\dimp},
    \\
    \vA^{2}
    &=&
    2 \tr(\BB^{4})
    =
    2 (a_{1}^{2} + \ldots + a_{\dimp}^{2}) .
\label{dimAv2dia}
\end{EQA}    
Moreover, rescaling the matrix \( \BB^{2} \) by \( a_{1} \) reduces the situation to 
the case with \( a_{1} = 1 \).
\begin{lemma}
\label{Lxiv12}
It holds
\begin{EQA}[c]
    \E \| \BB \xiv \|^{2} = \tr(\BB^{2}),
    \qquad 
    \Var\bigl( \| \BB \xiv \|^{2} \bigr) = 2 \tr(\BB^{4}).
\label{ExivVarxiv}
\end{EQA}    
Moreover, for \( \mu < 1 \)
\begin{EQA}[c]
\label{expbLGSA}
    \E \exp\bigl\{ \mu \| \BB \xiv \|^{2}/2 \bigr\}
    =
    \det (1 - \mu \BB^{2})^{-1/2} 
    =
    \prod_{i=1}^{\dimp} (1 - \mu a_{i})^{-1/2}.
\end{EQA}
\end{lemma}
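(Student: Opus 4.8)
The plan is to build directly on the diagonal reduction already prepared before the lemma. After the orthogonal change of variables, we may assume \( \BB^{2} = \diag(a_{1},\ldots,a_{\dimp}) \) with \( a_{1} \ge \cdots \ge a_{\dimp} \) and, after the rescaling noted above, \( a_{1} = 1 \). Writing the components of \( \xiv \) as \( \xi_{1},\ldots,\xi_{\dimp} \), which are i.i.d. \( \ND(0,1) \), the quadratic form decouples as
\begin{EQA}[c]
    \| \BB \xiv \|^{2}
    =
    \xiv^{\T} \BB^{2} \xiv
    =
    \sum_{i=1}^{\dimp} a_{i} \xi_{i}^{2},
\end{EQA}
so every assertion reduces to a statement about a weighted sum of independent \( \chi^{2}_{1} \) variables. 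This decoupling is the only structural step; everything else is one-dimensional Gaussian calculus.

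For the first two identities I would use the standard Gaussian moments \( \E \xi_{i}^{2} = 1 \) and \( \E \xi_{i}^{4} = 3 \). Linearity of expectation gives \( \E \| \BB \xiv \|^{2} = \sum_{i} a_{i} = \tr(\BB^{2}) \). Since the \( \xi_{i} \) are independent, the variance splits across the sum, and with \( \Var(\xi_{i}^{2}) = \E \xi_{i}^{4} - (\E \xi_{i}^{2})^{2} = 2 \) one obtains \( \Var\bigl( \| \BB \xiv \|^{2} \bigr) = \sum_{i} a_{i}^{2} \Var(\xi_{i}^{2}) = 2 \sum_{i} a_{i}^{2} = 2 \tr(\BB^{4}) \).

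For the exponential moment, independence again factorizes the expectation into a product over coordinates, reducing the claim to the scalar identity \( \E \exp\bigl\{ \beta \xi^{2}/2 \bigr\} = (1 - \beta)^{-1/2} \) for \( \beta < 1 \), which follows by completing the square in \( (2\pi)^{-1/2} \int \exp\bigl\{ -(1-\beta) x^{2}/2 \bigr\} \, dx \). Applying this with \( \beta = \mu a_{i} \), and noting that \( \mu < 1 \) together with \( a_{i} \le a_{1} = 1 \) guarantees \( \mu a_{i} < 1 \) for every \( i \), yields
\begin{EQA}[c]
    \E \exp\bigl\{ \mu \| \BB \xiv \|^{2}/2 \bigr\}
    =
    \prod_{i=1}^{\dimp} (1 - \mu a_{i})^{-1/2}
    =
    \det(\Id_{\dimp} - \mu \BB^{2})^{-1/2},
\end{EQA}
the last equality because the eigenvalues of \( \Id_{\dimp} - \mu \BB^{2} \) are exactly the \( 1 - \mu a_{i} \). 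There is no genuine obstacle here: the only point demanding a moment's care is the validity domain \( \mu a_{i} < 1 \), which is precisely why the normalization \( a_{1} = 1 \) and the restriction \( \mu < 1 \) are imposed.
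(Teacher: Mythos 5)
Your proposal is correct and follows essentially the same route as the paper: diagonalize \( \BB^{2} \), decouple \( \| \BB \xiv \|^{2} = \sum_{i} a_{i} \xi_{i}^{2} \) into independent summands, and apply the one-dimensional Gaussian moment and exponential-moment identities coordinatewise. The only difference is that you spell out the scalar computations (the fourth moment and the completion of the square) that the paper takes as known.
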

\begin{proof}
If \( \BB^{2} \) is diagonal, then \( \| \BB \xiv \|^{2} = \sum_{i} a_{i} \xi_{i}^{2} \) 
and the summands \( a_{i} \xi_{i}^{2} \) are independent.
It remains to note that \( \E (a_{i} \xi_{i}^{2}) = a_{i} \), 
\( \Var(a_{i} \xi_{i}^{2}) = 2 a_{i}^{2} \), and for \( \mu a_{i} < 1 \),
\begin{EQA}[c]
    \E \exp\bigl\{ \mu a_{i} \xi_{i}^{2}/2 \bigr\} 
    = 
    (1 - \mu a_{i})^{-1/2}
\label{expbLGSAi}
\end{EQA}    
yielding \eqref{expbLGSA}.
\end{proof}

Given \( u \), fix \( \mu < 1 \).
The exponential Markov inequality yields 
\begin{EQA}
    \P\bigl( \| \BB \xiv \|^{2} > \dimA + u \bigr)
    & \le &
    \exp\Bigl\{ - \frac{\mu (\dimA + u)}{2} \Bigr\} 
    \E\exp \Bigl( \frac{\mu \| \BB \xiv \|^{2}}{2} \Bigr)
    \\
    & \le &
    \exp\Bigl\{ 
        -\frac{\mu u}{2} - \frac{1}{2} \sum_{i=1}^{\dimp}  
        \bigl[ \mu a_{i} + \log\bigl( 1 - \mu a_{i} \bigr) \bigr] 
    \Bigr\} .
\label{logPmunugA}
\end{EQA}
We start with the case when \( \xx^{1/2} \le v/3 \).
Then \( u = 2 \xx^{1/2} \vA \) fulfills \( u \le 2v^{2}/3 \).
Define \( \mu = u / \vA^{2} \le 2/3 \) and use that 
\( t + \log(1-t) \ge - t^{2} \) for \( t \le 2/3 \).
This implies
\begin{EQA}
    && \nquad
    \P\bigl( \| \BB \xiv \|^{2} > \dimA + u \bigr)
    \\
    & \le &
    \exp\Bigl\{ 
        -\frac{\mu u}{2} + \frac{1}{2} \sum_{i=1}^{\dimp}  \mu^{2} a_{i}^{2}        
    \Bigr\}
    =
    \exp\bigl(
        -{u^{2}}/{(4 \vA^{2})} 
    \bigr) 
    = 
    e^{-\xx}.
\label{logPmunug1}
\end{EQA}
Next, let \( \xx^{1/2} > v/3 \). 
Set \( \mu = 2/3 \). 
It holds similarly to the above
\begin{EQA}[c]
    \sum_{i=1}^{\dimp} \bigl[ \mu a_{i} + \log\bigl( 1 - \mu a_{i} \bigr) \bigr] 
    \ge 
    - \sum_{i=1}^{\dimp} \mu^{2} a_{i}^{2} 
    \ge 
    - 2 \vA^{2}/9 
    \ge 
    - 2 \xx.
\label{sumI23}
\end{EQA}    
Now, for \( u = 6 \xx \) and \( \mu u / 2 = 2 \xx \), \eqref{logPmunug1} implies
\begin{EQA}[c]
    \P\bigl( \| \BB \xiv \|^{2} > \dimA + u \bigr)
    \le 
    \exp\bigl\{ - \bigl(  2 \xx - \xx  \bigr) \bigr\} 
    = 
    \exp(- \xx)
\label{logPmunug23}
\end{EQA}    
as required.

\section{Proof of Theorem~\ref{TxivqLD}}
The main step of the proof is the following exponential bound.
\begin{lemma}
\label{Lexpxiv} 
Suppose \eqref{expgamgm}.
For any \( \mu < 1 \) with 
\( \gm^{2} > \dimp \mu \), it holds 
\begin{EQA}
\label{Eexp2xi}
    \E \exp\Bigl( \frac{\mu \| \xiv \|^{2}}{2} \Bigr) 
        \Ind\Bigl( \| \xiv \| \le \gm/\mu - \sqrt{\dimp/\mu} \Bigr)
    & \le &
    2 (1 - \mu)^{-\dimp/2} .
\end{EQA}    
\end{lemma}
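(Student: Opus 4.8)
The plan is to remove the norm constraint in \eqref{expgamgm} by a Gaussian integral representation, and then split the resulting integral at the radius beyond which the exponential-moment bound ceases to apply; the truncation radius $R=\gm/\mu-\sqrt{\dimp/\mu}$ is then used only on the complementary region.

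First I would use the identity $\exp(\mu\|\xiv\|^{2}/2)=\E_{\gammav}\exp(\sqrt{\mu}\,\gammav^{\T}\xiv)$, valid for an auxiliary standard normal vector $\gammav\in\R^{\dimp}$ independent of $\xiv$ (it is the Gaussian moment generating function evaluated at $\sqrt{\mu}\,\xiv$). Inserting this and applying Fubini gives
\[
    \E\exp\Bigl(\frac{\mu\|\xiv\|^{2}}{2}\Bigr)\Ind(\|\xiv\|\le R)
    =
    \E_{\gammav}\,\E_{\xiv}\,\exp(\sqrt{\mu}\,\gammav^{\T}\xiv)\,\Ind(\|\xiv\|\le R),
\]
with $R=\gm/\mu-\sqrt{\dimp/\mu}$. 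The gain is that the inner expectation is now exactly of the form controlled by \eqref{expgamgm}, provided the effective coefficient $\sqrt{\mu}\,\gammav$ has norm at most $\gm$, i.e. provided $\|\gammav\|\le\gm/\sqrt{\mu}$.

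Next I would split the outer expectation over $\gammav$ at the threshold $\|\gammav\|=\gm/\sqrt{\mu}$. On the inner region I bound $\Ind(\|\xiv\|\le R)\le1$ and apply \eqref{expgamgm} to $\sqrt{\mu}\,\gammav$, so the inner expectation is at most $\exp(\mu\|\gammav\|^{2}/2)$; integrating this against the Gaussian density over a subset of $\R^{\dimp}$ only decreases the Gaussian moment, producing the main term $(1-\mu)^{-\dimp/2}$. On the outer region $\|\gammav\|>\gm/\sqrt{\mu}$ the moment bound is unavailable, and here the truncation must be exploited by peeling: writing $\hat\gammav=\gammav/\|\gammav\|$ and decomposing $\sqrt{\mu}\,\gammav^{\T}\xiv=\gm\,\hat\gammav^{\T}\xiv+(\sqrt{\mu}\|\gammav\|-\gm)\hat\gammav^{\T}\xiv$, I bound the second term on $\{\|\xiv\|\le R\}$ by $(\sqrt{\mu}\|\gammav\|-\gm)R$ via Cauchy--Schwarz and apply \eqref{expgamgm} to the first term, whose coefficient has norm exactly $\gm$. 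This yields the inner bound $\exp\{\gm^{2}/2+(\sqrt{\mu}\|\gammav\|-\gm)R\}$, whose exponent grows only linearly in $\|\gammav\|$.

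The main obstacle is the outer-region estimate: one must show that the surviving integral is again at most $(1-\mu)^{-\dimp/2}$, so that the two regions together give the factor $2$. This is where the choice of $R$ is decisive, since it makes $\sqrt{\mu}R=\gm/\sqrt{\mu}-\sqrt{\dimp}$; completing the square against the Gaussian density rewrites the integrand as $\exp\{-(1-\mu)\gm^{2}/(2\mu)+\dimp/2\}\exp\{-\frac{1}{2}(\|\gammav\|-(\gm/\sqrt{\mu}-\sqrt{\dimp}))^{2}\}$, i.e. a shifted Gaussian shell of radius $\gm/\sqrt{\mu}-\sqrt{\dimp}$ multiplied by a strongly decaying prefactor. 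Because the hypothesis $\gm^{2}>\dimp\mu$ forces the integration threshold $\gm/\sqrt{\mu}$ to lie above the concentration radius $\sqrt{\dimp}$ of $\|\gammav\|$, the shell integral is a genuine Gaussian tail and the prefactor $\exp\{-(1-\mu)\gm^{2}/(2\mu)+\dimp/2\}$ suppresses it below $(1-\mu)^{-\dimp/2}$. I would stress that a naive Cauchy--Schwarz bound without the peeling step would keep a factor $\exp(\sqrt{\mu}R\|\gammav\|)$ that is far too large to absorb, so peeling at radius $\gm$ is the key maneuver; the remaining verification is a routine Laplace/chi-tail estimate, uniform over the admissible range of $\mu$.
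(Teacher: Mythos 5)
Your overall architecture is sound and genuinely different from the paper's. Both arguments start from a Gaussian representation of \( \exp(\mu\|\xiv\|^{2}/2) \), but the paper goes the other way around: it writes the \emph{restricted} integral \( c_{p}\int\exp(\gammav^{\T}\xiv-\|\gammav\|^{2}/(2\mu))\Ind(\|\gammav\|\le\gm)\,d\gammav \), completes the square, and uses the truncation \( \|\xiv\|\le\gm/\mu-\sqrt{\dimp/\mu} \) only to show that this restricted integral equals \( \mu^{\dimp/2}e^{\mu\|\xiv\|^{2}/2}\,\P(\|\varepsilonv+\mu^{1/2}\xiv\|\le\mu^{-1/2}\gm)\ge\tfrac12\,\mu^{\dimp/2}e^{\mu\|\xiv\|^{2}/2} \), since the ball of radius \( \mu^{-1/2}\gm \) around \( -\mu^{1/2}\xiv \) captures at least half of the Gaussian mass. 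The factor \( 2 \) appears there, condition \eqref{expgamgm} is then applied only where \( \|\gammav\|\le\gm \), and no estimate outside that region is ever needed. You instead keep the full Gaussian average over \( \gammav \), split at \( \|\gammav\|=\gm/\sqrt{\mu} \), and must then \emph{upper bound} the contribution of the outer region. Your inner-region term and your peeling step on the outer region (writing \( \sqrt{\mu}\,\gammav^{\T}\xiv=\gm\hat\gammav^{\T}\xiv+(\sqrt{\mu}\|\gammav\|-\gm)\hat\gammav^{\T}\xiv \) and using the truncation plus Cauchy--Schwarz on the second piece) are both correct, as is your completion of the square giving the prefactor \( \exp\{-(1-\mu)\gm^{2}/(2\mu)+\dimp/2\} \) and the shift \( s=\gm/\sqrt{\mu}-\sqrt{\dimp} \).

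The gap is the final outer-region estimate, which you declare routine but which is in fact the hardest part of your route and is not carried out. The quantity you must control is
\begin{EQA}[c]
    (2\pi)^{-\dimp/2}\int_{\|\gammav\|>\gm/\sqrt{\mu}}
    \exp\Bigl\{-\tfrac12\bigl(\|\gammav\|-s\bigr)^{2}\Bigr\}\,d\gammav ,
\end{EQA}
and this is \emph{not} ``a genuine Gaussian tail'': in polar coordinates it carries the surface-area Jacobian \( \|\gammav\|^{\dimp-1} \), so it is an incomplete-gamma-type integral whose value near the lower limit \( s+\sqrt{\dimp} \) is of order \( s^{\dimp-1}e^{-\dimp/2} \) times dimension-dependent factors, not of order \( e^{-\dimp/2} \). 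The required inequality (outer term \( \le(1-\mu)^{-\dimp/2} \)) does appear to be true --- after optimizing over \( \mu \) it reduces to showing \( (\dimp/x)^{\dimp/2}\,\frac{2^{1-\dimp/2}}{\Gamma(\dimp/2)}\int_{\sqrt{\dimp}}^{\infty}(\sqrt{x}-\sqrt{\dimp}+t)^{\dimp-1}e^{-t^{2}/2}\,dt\le1 \) for \( x=\gm^{2}/\mu>\dimp \), which I can verify in limiting regimes --- but the obvious shortcuts fail: bounding \( (s+t)^{\dimp-1}\le2^{\dimp-2}(s^{\dimp-1}+t^{\dimp-1}) \) loses a factor \( 2^{\dimp} \) that cannot be absorbed, and dropping the region restriction to use \( \E e^{a\|\varepsilonv\|}\le e^{a^{2}/(2(1-\mu))}\mu^{-\dimp/2} \) gives a bound that is exponentially too large already at \( \gm^{2}=\dimp\mu \), \( \mu=1/2 \). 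So the decisive step needs a genuine, uniform-in-\( (\mu,\gm,\dimp) \) Laplace analysis of a noncentral chi tail that you have not supplied; until it is written out, the constant \( 2 \) in \eqref{Eexp2xi} is not established by your argument. The paper's restricted-integral device avoids this entirely, which is why it is the preferable route.
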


\begin{proof}
Let \( \varepsilonv \) be a standard normal vector in \( \R^{\dimp} \) and 
\( \uv \in \R^{\dimp} \). 
The bound \( \P\bigl( \| \varepsilonv \|^{2} > \dimp \bigr) \le 1/2 \) implies 
for any vector \( \uv \) and any \( \rr \) with \( \rr \ge \| \uv \| + \dimp^{1/2} \) that 
\( \P\bigl( \| \uv + \varepsilonv \| \le \rr \bigr) \ge 1/2 \).
Let us fix some \( \xiv \) with  \( \| \xiv \| \le \gm/\mu - \sqrt{\dimp/\mu} \) 
and denote by \( \P_{\xiv} \) the conditional probability given \( \xiv \).
It holds with \( c_{p} = (2\pi)^{-\dimp/2} \) 
\begin{EQA}
    && \nquad
    c_{p} \int \exp\Bigl( \gammav^{\T} \xiv - \frac{\| \gammav \|^{2}}{2 \mu}  \Bigr)
        \Ind(\| \gammav \| \le \gm) d\gammav
    \\
    &=&
    c_{p} \exp\bigl( \mu \| \xiv \|^{2} / 2 \bigr)
    \int \exp\Bigl( 
        - \frac{1}{2}  \bigl\| \mu^{-1/2} \gammav - \mu^{1/2} \xiv \bigr\|^{2} 
    \Bigr) \Ind(\mu^{-1/2} \| \gammav \| \le \mu^{-1/2} \gm) d\gammav    
    \\
    & = &
    \mu^{\dimp/2} \exp\bigl( \mu \| \xiv \|^{2} / 2 \bigr) 
    \P_{\xiv}\bigl( \| \varepsilonv + \mu^{1/2} \xiv \| \le \mu^{-1/2} \gm \bigr)
    \\
    & \ge &
    0.5 \mu^{\dimp/2} \exp\bigl( \mu \| \xiv \|^{2} / 2 \bigr) ,
\label{intggvv}
\end{EQA}    
because \( \| \mu^{1/2} \xiv \| + \dimp^{1/2} \le \mu^{-1/2} \gm \).
This implies in view of \( \dimp < \gm^{2}/\mu \) that
\begin{EQA}
    && \nquad
    \exp\bigl( {\mu \| \xiv \|^{2}}/{2} \bigr) 
        \Ind\bigl( \| \xiv \|^{2} \le \gm/\mu - \sqrt{\dimp/\mu} \bigr)
    \\
    & \le &
    2 \mu^{-\dimp/2} c_{p}
    \int \exp\Bigl( \gammav^{\T} \xiv - \frac{\| \gammav \|^{2}}{2\mu} \Bigr)
        \Ind(\| \gammav \| \le \gm) d\gammav .
\label{expxiv1cp}
\end{EQA}    
Further, by \eqref{expgamgm}
\begin{EQA}
    && 
    \nquad
    c_{p} \E \int \exp\Bigl( \gammav^{\T} \xiv - \frac{1}{2\mu} \| \gammav \|^{2} \Bigr)
        \Ind(\| \gammav \| \le \gm) d\gammav
    \\
    & \le &
    c_{p} \int \exp\Bigl( - \frac{\mu^{-1} - 1}{2} \| \gammav \|^{2} \Bigr)
        \Ind(\| \gammav \| \le \gm) d\gammav
    \\
    & \le &
    c_{p} \int \exp\Bigl( - \frac{\mu^{-1} - 1}{2} \| \gammav \|^{2} \Bigr) d \gammav
    \\
    & \le &
    (\mu^{-1} - 1)^{- \dimp/2}
\label{nununu}
\end{EQA}    
and \eqref{Eexp2xi} follows.
\end{proof}

Due to this result, the scaled squared norm \( \mu \| \xiv \|^{2}/2 \) after a proper
truncation possesses the same exponential moments as in the Gaussian case.
A straightforward implication is the probability bound 
\( \P\bigl( \| \xiv \|^{2} > \dimp + u \bigr) \) for moderate values \( u \).
Namely, given \( u > 0 \), define \( \mu = u/(u+\dimp) \).
This value optimizes the inequality \eqref{logPmunulg} in the Gaussian case.
Now we can apply a similar bound under the constraints 
\( \| \xiv \| \le \gm/\mu - \sqrt{\dimp/\mu} \).
Therefore, the bound is only meaningful if 
\( \sqrt{u + \dimp} \le \gm/\mu - \sqrt{\dimp/\mu} \) with \( \mu = u/(u+\dimp) \),
or, with \( w = \sqrt{u/\dimp} \le \wwc \); see \eqref{wc212}.
    
The largest value \( u \) for which this constraint is still 
valid, is given by \( \dimp + u = \yyc^{2} \).
Hence, \eqref{Eexp2xi} yields for \( \dimp + u \le \yyc^{2} \)
\begin{EQA}
    && \nquad
    \P\bigl( \| \xiv \|^{2} > \dimp + u, \| \xiv \| \le \yyc \bigr)
    \\
    & \le &
    \exp\Bigl\{ - \frac{\mu (\dimp + u)}{2} \Bigr\} 
    \E\exp \Bigl( \frac{\mu \| \xiv \|^{2}}{2} \Bigr)
    \Ind\Bigl( \| \xiv \| \le \gm/\mu - \sqrt{\dimp/\mu} \Bigr)
    \\
    & \le &
    2 \exp\bigl\{ - 0.5 \bigl[ \mu (\dimp + u) + \dimp \log( 1 - \mu ) \bigr] 
    \bigr\} 
    \\
    &=&
    2 \exp\bigl\{ 
        - 0.5 \bigl[ u - \dimp \log( 1 + u/\dimp ) \bigr] 
    \bigr\} .
\label{logPmunu}
\end{EQA}
Similarly to the Gaussian case, this implies with
\( \kappa = 6.6 \) that
\begin{EQA}
    \P\bigl( \| \xiv \| \ge \dimp + \sqrt{\kappa \xx \dimp} \vee (\kappa \xx), 
        \| \xiv \| \le \yyc
    \bigr)
    & \le &
    2 \exp( - \xx ) .
\label{Ppzv}
\end{EQA}    
The Gaussian case means that \eqref{expgamgm} holds with \( \gm = \infty \)
yielding \( \yyc = \infty \). 
In the non-Gaussian case with a finite \( \gm \), we have to accompany the moderate 
deviation bound with a large deviation bound 
\( \P\bigl( \| \xiv \| > \yy \bigr) \) for \( \yy \ge \yyc \).
This is done by combining the bound \eqref{Eexp2xi} with
the standard slicing arguments.

\begin{lemma}
\label{Lexpxi2}
Let \( \mud \le \gm^{2}/\dimp \).
Define \( \yyd = \gm/\mud - \sqrt{\dimp/\mud} \) and 
\( \gmd = \mud \yyd = \gm - \sqrt{\mud \dimp} \).
It holds for \( \yy \ge \yyd \)
\begin{EQA}
\label{Pexp2xi}
    \P\bigl( \| \xiv \| > \yy \bigr)
    & \le &
    8.4 (1 - \gmd/\yy)^{-\dimp/2} \exp\bigl( - \gmd \yy/2 \bigr)
    \\
    & \le &
    8.4 \exp\bigl\{ - \xxd -  \gmd (\yy - \yyd)/2 \bigr\}.
\label{Pexp2xiy}
\end{EQA}    
with \( \xxd \) defined by 
\begin{EQA}[c]
    2 \xxd 
    = 
    \mud \yyd^{2} + \dimp \log(1 - \mud) 
    =
    \gm^{2}/\mud - \dimp + \dimp \log(1 - \mud).
\label{xxdyydmud}
\end{EQA}    
\end{lemma}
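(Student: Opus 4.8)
I want to prove the large-deviation bound of Lemma~\ref{Lexpxi2}, which controls $\P(\|\xiv\|>\yy)$ for $\yy\ge\yyd$. The key input is the exponential-moment bound \eqref{Eexp2xi} from Lemma~\ref{Lexpxiv}, but that bound carries the truncation $\Ind(\|\xiv\|\le\gm/\mu-\sqrt{\dimp/\mu})$, so I cannot simply integrate against a single $\mu$ over the whole tail $\{\|\xiv\|>\yy\}$: as $\|\xiv\|$ grows the admissible $\mu$ shrinks. The standard device here is a \emph{slicing} (peeling) argument. First I would verify that the choice $\mud\le\gm^2/\dimp$ makes $\yyd=\gm/\mud-\sqrt{\dimp/\mud}$ the largest radius at which \eqref{Eexp2xi} is valid with exponent $\mud$, so that on the shell $\{\yy<\|\xiv\|\le\yyd'\}$ for a slightly larger $\yyd'$ the truncation indicator is active and we may apply \eqref{Eexp2xi}.

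\textbf{Slicing.} The plan is to partition the tail region $\{\|\xiv\|>\yy\}$ into geometrically (or arithmetically) growing shells $\{y_k<\|\xiv\|\le y_{k+1}\}$ with $y_0=\yy$, and on each shell apply the exponential Chebyshev inequality with an exponent $\mu_k$ chosen just large enough that the truncation radius $\gm/\mu_k-\sqrt{\dimp/\mu_k}$ exceeds $y_{k+1}$, so that \eqref{Eexp2xi} applies on that shell. On the shell one gets
\begin{EQA}[c]
    \P\bigl( y_k < \|\xiv\| \le y_{k+1} \bigr)
    \le
    2 \exp\bigl\{ - \mu_k y_k^2/2 - (\dimp/2)\log(1-\mu_k) \bigr\}.
\label{shellbound}
\end{EQA}
Choosing $\mu_k$ as the boundary value so that $\mu_k y_k = \gmd$ (the value $\gmd=\gm-\sqrt{\mud\dimp}$ is exactly $\mud\yyd$, the ``slope'' of the linear tail) linearizes the exponent: the dominant term becomes $\gmd y_k/2$, growing linearly in the shell radius. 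Summing the geometric-type series $\sum_k \exp(-\gmd y_k/2)$ over the shells produces the tail bound $\P(\|\xiv\|>\yy)\le C(1-\gmd/\yy)^{-\dimp/2}\exp(-\gmd\yy/2)$, where the constant $C$ absorbs both the factor $2$ from \eqref{Eexp2xi} and the sum of the geometric series; tracking constants carefully should give the stated $8.4$.

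\textbf{From the first to the second bound.} For the second line \eqref{Pexp2xiy} I would rewrite the exponent at $\yy=\yyd$: by definition \eqref{xxdyydmud}, $2\xxd=\mud\yyd^2+\dimp\log(1-\mud)$, so the bound at the threshold $\yyd$ is exactly $8.4\,e^{-\xxd}$. It then remains to show the exponent $-\gmd\yy/2-(\dimp/2)\log(1-\gmd/\yy)$ decreases at rate at least $\gmd/2$ as $\yy$ increases past $\yyd$; this follows because the derivative in $\yy$ of $\gmd\yy/2+(\dimp/2)\log(1-\gmd/\yy)$ is $\gmd/2 - (\dimp/2)\gmd/(\yy^2-\gmd\yy)$, which is at least $\gmd/2$ once $\yy\ge\yyd$ (using $\yyd\gmd\ge\dimp$, equivalently the defining relation between $\yyd,\gmd,\mud$). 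Convexity/monotonicity then yields the linear lower bound $\gmd(\yy-\yyd)/2$ on the incremental decay, giving \eqref{Pexp2xiy}.

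\textbf{Main obstacle.} The genuinely delicate point is the bookkeeping in the slicing step: one must choose the shell endpoints and the per-shell exponents $\mu_k$ so that (i) each $\mu_k$ keeps the truncation radius above the shell, making \eqref{Eexp2xi} legitimate, and (ii) the summed geometric series of the per-shell probabilities stays bounded by a clean constant, with the leading exponential factor $e^{-\gmd\yy/2}$ coming out unspoiled. Balancing these two requirements — tight enough shells to control the truncation, coarse enough shells to keep the series summable with an explicit constant $8.4$ — is where the real work lies; the exponential-moment bound itself and the convexity estimate for the second inequality are comparatively routine.
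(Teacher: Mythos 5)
Your proposal is correct and follows essentially the same route as the paper: the same slicing of the tail into shells, the same choice of per-shell exponent (the boundary value with \( \mu_{k} \yy_{k} = \gmd \)) so that the truncated bound \eqref{Eexp2xi} applies on each shell and the exponent linearizes to \( \gmd \yy_{k}/2 \), the same geometric summation giving the constant \( 8.4 = 2e^{1/2}(1-e^{-1/2})^{-1} \), and the same derivative argument for passing to \eqref{Pexp2xiy}. One small correction: the derivative of \( \log(1-\gmd/\yy) \) in \( \yy \) is \( +\gmd/(\yy^{2}-\gmd\yy) > 0 \), not negative, so \( f'(\yy) \ge \gmd/2 \) holds immediately and the extra condition \( \yyd \gmd \ge \dimp \) you invoke is not needed.
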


\begin{proof}
Consider the growing sequence \( \yy_{k} \) with  \( \yy_{1} = \yy \) 
and \( \gmd \yy_{k+1} = \gmd \yy + k \).
Define also \( \mu_{k} = \gmd/ \yy_{k} \).
In particular, \( \mu_{k} \le \mu_{1} = \gmd / \yy \).
Obviously
\begin{EQA}
    \P\bigl( \| \xiv \| > \yy \bigr)
    &=&
    \sum_{k=1}^{\infty} 
        \P\bigl( \| \xiv \| > \yy_{k}, \| \xiv \| \le \yy_{k+1}
        \bigr).
\label{zzkkp1}
\end{EQA}    
Now we try to evaluate every slicing probability in this expression.
We use that 
\begin{EQA}
    \mu_{k+1} \yy_{k}^{2}
    &=&
    \frac{(\gmd \yy  + k - 1)^{2}}{\gmd \yy  + k} 
    \ge 
    \gmd \yy + k - 2 ,
\label{nukk1}
\end{EQA}    
and also \( \gm/\mu_{k} - \sqrt{\dimp / \mu_{k}} \ge \yy_{k} \) because
\( \gm - \gmd = \sqrt{\mud \dimp} > \sqrt{\mu_{k} \dimp} \) and 
\begin{EQA}[c]
    \gm/\mu_{k} - \sqrt{\dimp / \mu_{k}} - \yy_{k}
    =
    \mu_{k}^{-1} (\gm - \sqrt{\mu_{k} \dimp} - \gmd)
    \ge 0.
\label{gmgmmyy}
\end{EQA}    
Hence by \eqref{Eexp2xi} 
\begin{EQA}
    \P\Bigl( \| \xiv \| > \yy \Bigr)
    & \le &
    \sum_{k=1}^{\infty} 
     \P\Bigl( 
        \| \xiv \| > \yy_{k}, 
        \| \xiv \| \le \yy_{k+1}
     \Bigr)
    \\
    & \le &
    \sum_{k=1}^{\infty} \exp\Bigl( - \frac{\mu_{k+1} \yy_{k}^{2}}{2} \Bigr)
    \E \exp\Bigl( \frac{\mu_{k+1} \| \xiv \|^{2}}{2} \Bigr)
        \Ind\bigl( \| \xiv \| \le \yy_{k+1} \bigr)
    \\
    & \le &
    \sum_{k=1}^{\infty} 2 \bigl( 1 - \mu_{k+1} \bigr)^{-\dimp/2}
    \exp\Bigl( - \frac{\mu_{k+1} \yy_{k}^{2}}{2} \Bigr)
    \\
    & \le &
    2 \bigl( 1 - \mu_{1} \bigr)^{- \dimp/2}
    \sum_{k=1}^{\infty} \exp\Bigl( - \frac{\gmd \yy + k - 2}{2} \Bigr) 
    \\
    & = &
    2 e^{1/2} (1 - e^{-1/2})^{-1} 
    (1 - \mu_{1})^{-\dimp/2} \exp\bigl( - \gmd \yy/2 \bigr) 
    \\
    & \le &
    8.4 (1 - \mu_{1})^{-\dimp/2} \exp\bigl( - \gmd \yy/2 \bigr)
\label{Psumnuk}
\end{EQA}    
and the first assertion follows.
For  \( \yy = \yyd \), it holds 
\begin{EQA}[c]
    \gmd \yyd + \dimp \log(1 - \mud)
    =
    \mud \yyd^{2} + \dimp \log(1 - \mud)
    =
    2 \xxd
\label{gmcyyc2xxc}
\end{EQA}    
and \eqref{Pexp2xi} implies 
\( \P\bigl( \| \xiv \| > \yyd \bigr) \le 8.4 \exp(- \xxd) \).
Now observe that the function 
\( f(\yy) = \gmd \yy/2 + (\dimp/2) \log \bigl( 1 - \gmd/\yy \bigr) \)
fulfills \( f(\yyd) = \xxd \) and \( f'(\yy) \ge \gmd/2 \) yielding 
\( f(\yy) \ge \xxd + \gmd (\yy - \yyd)/2 \).
This implies \eqref{Pexp2xiy}.
\end{proof}

The statements of the theorem are obtained by applying the lemmas with 
\( \mud = \muc = \wwc^{2}/(1 + \wwc^{2}) \).
This also implies 
\( \yyd = \yyc \), \( \xxd = \xxc \), and 
\( \gmd = \gmc = \gm - \sqrt{\muc \dimp} \);
cf. \eqref{zcgmp}. 

\section{Proof of Theorem~\ref{TxivqLDA}}
The main steps of the proof are similar to the proof of Theorem~\ref{TxivqLD}.
\begin{lemma}
\label{Lexpxig} 
Suppose \eqref{expgamgm}.
For any \( \mu < 1 \) with
\( \gm^{2}/\mu \ge \dimA \), it holds 
\begin{EQA}
\label{Eexp2xig}
    \E \exp\bigl( {\mu \| \BB \xiv \|^{2}}/{2} \bigr)
        \Ind\bigl( \| \BB^{2} \xiv \| \le \gm/\mu - \sqrt{\dimA/\mu} \bigr)
    & \le &
    2 {\det(\Id_{\dimp} - \mu \BB^{2})^{-1/2}} .
\end{EQA}    
\end{lemma}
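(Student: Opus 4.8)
The plan is to follow the same route as the proof of Lemma~\ref{Lexpxiv}, replacing the standard Gaussian integral representation of \( \exp(\mu\|\xiv\|^{2}/2) \) by one tailored to the matrix \( \BB \). Diagonalizing \( \BB^{2} = U^{\T}\diag(a_{1},\ldots,a_{\dimp})U \) and noting that both sides of \eqref{Eexp2xig} are continuous in the eigenvalues \( a_{i} \) (for \( \mu a_{i} < 1 \)), I would first assume that \( \BB \) is strictly positive definite, so that \( \BB^{-2} \) exists, and recover the general case \( \BB \ge 0 \) at the end by letting the smallest eigenvalues tend to zero.

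Concretely, for fixed \( \xiv \) I would start from the integral \( c_{p}\int \exp\bigl(\gammav^{\T}\xiv - \frac{1}{2\mu}\gammav^{\T}\BB^{-2}\gammav\bigr)\Ind(\|\gammav\|\le\gm)\,d\gammav \) with \( c_{p}=(2\pi)^{-\dimp/2} \), and complete the square via
\[
    \gammav^{\T}\xiv - \frac{1}{2\mu}\gammav^{\T}\BB^{-2}\gammav
    =
    \frac{\mu}{2}\|\BB\xiv\|^{2} - \frac{1}{2\mu}\bigl\|\BB^{-1}\gammav - \mu\BB\xiv\bigr\|^{2}.
\]
The substitution \( \gammav = \mu^{1/2}\BB\varepsilonv + \mu\BB^{2}\xiv \), whose Jacobian is \( \mu^{\dimp/2}\det\BB \), turns the truncated integral into a Gaussian probability, yielding the identity that this integral equals \( \mu^{\dimp/2}\det\BB\,\exp(\mu\|\BB\xiv\|^{2}/2)\,\P_{\xiv}\bigl(\|\BB\varepsilonv + \mu^{1/2}\BB^{2}\xiv\|\le\mu^{-1/2}\gm\bigr) \) with \( \varepsilonv\sim\ND(0,\Id_{\dimp}) \). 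This is the exact matrix analogue of the display \eqref{intggvv} from the scalar proof.

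The next step is to show that the Gaussian probability is at least \( 1/2 \) on the truncation event. Writing \( \uv = \mu^{1/2}\BB^{2}\xiv \), the constraint \( \|\BB^{2}\xiv\|\le\gm/\mu - \sqrt{\dimA/\mu} \) gives \( \|\uv\|\le\mu^{-1/2}\gm - \sqrt{\dimA} \), so by the triangle inequality the event \( \|\BB\varepsilonv\|\le\sqrt{\dimA} \) already forces \( \|\BB\varepsilonv+\uv\|\le\mu^{-1/2}\gm \). Hence the probability is bounded below by \( \P(\|\BB\varepsilonv\|^{2}\le\dimA) \), and I would invoke here the fact \( \P(\|\BB\varepsilonv\|^{2} > \dimA)\le 1/2 \), i.e.\ the median of the generalized chi-square \( \|\BB\varepsilonv\|^{2}=\sum_{i} a_{i}\varepsilon_{i}^{2} \) does not exceed its mean \( \tr(\BB^{2})=\dimA \); this is the matrix counterpart of the bound \( \P(\|\varepsilonv\|^{2}>\dimp)\le1/2 \) used in Lemma~\ref{Lexpxiv}. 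This produces the pointwise bound
\[
    \exp\bigl(\mu\|\BB\xiv\|^{2}/2\bigr)\,\Ind\bigl(\|\BB^{2}\xiv\|\le\gm/\mu-\sqrt{\dimA/\mu}\bigr)
    \le
    2\mu^{-\dimp/2}(\det\BB)^{-1}\,c_{p}\!\int \exp\Bigl(\gammav^{\T}\xiv - \tfrac{1}{2\mu}\gammav^{\T}\BB^{-2}\gammav\Bigr)\Ind(\|\gammav\|\le\gm)\,d\gammav.
\]

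Finally I would take expectations, apply Fubini and the moment assumption \eqref{expgamgm} in the form \( \E\exp(\gammav^{\T}\xiv)\le\exp(\|\gammav\|^{2}/2) \) for \( \|\gammav\|\le\gm \), and drop the indicator on \( \gammav \) since the integrand is positive. The surviving Gaussian integral is \( c_{p}\int \exp\bigl(-\frac12\gammav^{\T}(\mu^{-1}\BB^{-2}-\Id_{\dimp})\gammav\bigr)\,d\gammav = \det(\mu^{-1}\BB^{-2}-\Id_{\dimp})^{-1/2} \), which converges precisely because \( \mu\lambda_{\max}(\BB^{2})=\mu<1 \) (recall \( \lambdaB=1 \)). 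Using \( \det(\mu^{-1}\BB^{-2}-\Id_{\dimp})=\mu^{-\dimp}(\det\BB)^{-2}\det(\Id_{\dimp}-\mu\BB^{2}) \), the factors \( \mu^{\pm\dimp/2} \) and \( (\det\BB)^{\pm1} \) cancel and leave exactly \( 2\det(\Id_{\dimp}-\mu\BB^{2})^{-1/2} \), which is \eqref{Eexp2xig}. I expect the only genuinely non-routine point to be the median bound \( \P(\|\BB\varepsilonv\|^{2}>\dimA)\le1/2 \) for the generalized chi-square; the remaining work is the bookkeeping of the change of variables and the determinant identity.
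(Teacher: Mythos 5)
Your proposal is correct and follows essentially the same route as the paper's proof of Lemma~\ref{Lexpxig}: the same completed-square Gaussian integral representation of \( \exp(\mu\|\BB\xiv\|^{2}/2) \), the same lower bound of the truncated Gaussian probability by \( \P(\|\BB\varepsilonv\|^{2}\le\dimA)\ge 1/2 \), and the same application of \eqref{expgamgm} followed by the determinant identity (the paper merely absorbs your explicit Jacobian factor \( \det\BB \) into its normalizing constant \( c_{\dimp}(\BB)=(2\pi)^{-\dimp/2}\det(\BB^{-1}) \), and, like you, states the median-below-mean bound for \( \|\BB\varepsilonv\|^{2} \) without proof). Your additional remark on reducing singular \( \BB \) to the invertible case by continuity is a small technical refinement the paper leaves implicit.
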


\begin{proof}
With \( c_{\dimp}(\BB) = \bigl( 2 \pi \bigr)^{-\dimp/2} \det(\BB^{-1}) \)
\begin{EQA}
    && \nquad
    c_{p}(\BB) \int \exp\Bigl( \gammav^{\T} \xiv - \frac{1}{2 \mu} \| \BB^{-1} \gammav \|^{2} \Bigr)
        \Ind(\| \gammav \| \le \gm) d\gammav
    \\
    &=&
    c_{p}(\BB) \exp\Bigl( \frac{\mu \| \BB \xiv \|^{2}}{2} \Bigr)
    \int \exp\Bigl( 
        - \frac{1}{2}  \bigl\| \mu^{1/2} \BB \xiv - \mu^{-1/2} \BB^{-1} \gammav \bigr\|^{2} 
    \Bigr) \Ind(\| \gammav \| \le \gm) d\gammav    
    \\
    &=&
    \mu^{\dimp/2} \exp\Bigl( \frac{\mu \| \BB \xiv \|^{2}}{2} \Bigr) 
    \P_{\xiv}\bigl( 
        \| \mu^{-1/2} \BB \varepsilonv + \BB^{2} \xiv \| \le \gm / \mu
    \bigr),
\label{intggvvg}
\end{EQA}
where \( \varepsilonv \) denotes a standard normal vector in \( \R^{\dimp} \)
and \( \P_{\xiv} \) means the conditional probability given \( \xiv \).
Moreover, for any \( \uv \in \R^{\dimp} \) and \( \rr \ge \dimA^{1/2} + \| \uv \| \), 
it holds in view of \( \P \bigl( \| \BB \varepsilonv \|^{2} > \dimA \bigr) \le 1/2 \)
\begin{EQA}
    \P\bigl( \| \BB \varepsilonv - \uv \| \le \rr \bigr)
    & \ge &
    \P\bigl( \| \BB \varepsilonv \| \le \sqrt{\dimA} \bigr)
    \ge 
    1/2 .
\label{Arepsv}
\end{EQA}    
This implies
\begin{EQA}
    && 
    \nquad
    \exp\Bigl( \mu \| \BB \xiv \|^{2} / 2 \Bigr) 
        \Ind\bigl( \| \BB^{2} \xiv \| \le \gm / \mu - \sqrt{\dimA/\mu} \bigr)
    \\
    & \le &
    2 \mu^{- \dimp/2} c_{p}(\BB)
    \int \exp\Bigl( \gammav^{\T} \xiv - \frac{1}{2 \mu} \| \BB^{-1} \gammav \|^{2} \Bigr)
        \Ind(\| \gammav \| \le \gm) d\gammav .
\label{expxiv1cpg}
\end{EQA}    
Further, by \eqref{expgamgm}
\begin{EQA}
    && 
    \nquad
    c_{p}(\BB) \E \int \exp\Bigl( \gammav^{\T} \xiv - \frac{1}{2\mu} \| \BB^{-1} \gammav \|^{2} \Bigr)
        \Ind(\| \gammav \| \le \gm) d\gammav
    \\
    & \le &
    c_{p}(\BB) \int \exp\Bigl( 
        \frac{\| \gammav \|^{2}}{2} - \frac{1}{2\mu} \| \BB^{-1} \gammav \|^{2} 
    \Bigr) d\gammav
    \\
    & \le &
    \det(\BB^{-1}) \det(\mu^{-1} \BB^{-2} - \Id_{\dimp})^{-1/2}
    =
    \mu^{p/2} \det( \Id_{\dimp} - \mu \BB^{2} )^{-1/2}
\label{nununug}
\end{EQA}    
and \eqref{Eexp2xig} follows.
\end{proof}

Now we evaluate the probability \( \P\bigl( \| \BB \xiv \| > \yy \bigr) \) for moderate 
values of \( \yy \).
\begin{lemma}
\label{Lexplotr}
Let \( \mud < 1 \wedge (\gm^{2}/\dimA) \). 
With \( \yyd = \gm/\mud - \sqrt{\dimA/\mud} \), it holds 
for any \( u > 0 \)
\begin{EQA}
    && \nquad
    \P\bigl( \| \BB \xiv \|^{2} > \dimA + u, \| \BB^{2} \xiv \| \le \yyd \bigr)
    \\
    & \le &
    2 \exp\bigl\{ 
        - 0.5\mud (\dimA + u) - 0.5 \log \det (\Id_{\dimp} - \mud \BB^{2}) 
    \bigr\} .
\label{exptrxiq}
\end{EQA}    
In particular, if \( \BB^{2} \) is diagonal, that is, 
\( \BB^{2} = \diag\bigl( a_{1},\ldots,a_{\dimp} \bigr) \), then 
\begin{EQA}
    && \nquad
    \P\bigl( \| \BB \xiv \|^{2} > \dimA + u, \| \BB^{2} \xiv \| \le \yyd \bigr)
    \\
    & \le &
    2 \exp\Bigl\{ 
        -\frac{\mud u}{2} - \frac{1}{2} \sum_{i=1}^{\dimp}  
        \bigl[ \mud a_{i} + \log\bigl( 1 - \mud a_{i} \bigr) \bigr] 
    \Bigr\} .
\label{exptrxiqq}
\end{EQA}
\end{lemma}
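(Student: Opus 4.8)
The plan is to reproduce the moderate-deviation argument that produced \eqref{logPmunu} in the proof of Theorem~\ref{TxivqLD}, with the scalar exponential bound of Lemma~\ref{Lexpxiv} replaced by its matrix counterpart Lemma~\ref{Lexpxig}. First I would fix the exponent at \( \mu = \mud \) and apply the exponential Markov inequality: on the event \( \{ \| \BB\xiv \|^{2} > \dimA + u \} \) one has \( \exp(\mud \| \BB\xiv \|^{2}/2) > \exp\{ \mud(\dimA+u)/2 \} \), whence
\begin{EQA}
    \P\bigl( \| \BB\xiv \|^{2} > \dimA + u, \, \| \BB^{2} \xiv \| \le \yyd \bigr)
    & \le &
    \exp\Bigl\{ - \frac{\mud(\dimA + u)}{2} \Bigr\}
    \E \exp\Bigl( \frac{\mud \| \BB\xiv \|^{2}}{2} \Bigr)
        \Ind\bigl( \| \BB^{2} \xiv \| \le \yyd \bigr) .
\end{EQA}

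The decisive observation is that the truncation threshold \( \yyd = \gm/\mud - \sqrt{\dimA/\mud} \) is chosen so that the indicator above coincides exactly with the one appearing in Lemma~\ref{Lexpxig} at \( \mu = \mud \); moreover the hypothesis \( \mud < 1 \wedge (\gm^{2}/\dimA) \) guarantees both \( \mud < 1 \) and \( \gm^{2}/\mud > \dimA \), which are precisely the conditions under which that lemma applies. Substituting the bound \( \E\exp(\mud \| \BB\xiv \|^{2}/2)\Ind(\cdots) \le 2\det(\Id_{\dimp} - \mud\BB^{2})^{-1/2} \) and writing \( \det(\cdots)^{-1/2} = \exp\{ -\frac{1}{2}\log\det(\Id_{\dimp} - \mud\BB^{2}) \} \), I collect exponents to obtain the claimed bound \eqref{exptrxiq}.

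For the diagonal specialization I would use \( \log\det(\Id_{\dimp} - \mud\BB^{2}) = \sum_{i=1}^{\dimp}\log(1 - \mud a_{i}) \) together with \( \dimA = \tr(\BB^{2}) = \sum_{i} a_{i} \), so that \( \mud\dimA = \sum_{i}\mud a_{i} \). Regrouping then turns \( -\frac{1}{2}\mud(\dimA + u) - \frac{1}{2}\sum_{i}\log(1 - \mud a_{i}) \) into \( -\frac{1}{2}\mud u - \frac{1}{2}\sum_{i}[\mud a_{i} + \log(1 - \mud a_{i})] \), which is \eqref{exptrxiqq}. I do not expect a genuine obstacle: the analytic content lives entirely in Lemma~\ref{Lexpxig}, and the present lemma is the bookkeeping step that feeds its exponential-moment bound into the exponential Markov inequality. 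The only point demanding care is checking that the truncation level \( \yyd \) and the admissible range of \( \mud \) match the hypotheses of Lemma~\ref{Lexpxig} term by term, which the stated definitions ensure.
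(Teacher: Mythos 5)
Your proposal is correct and follows exactly the paper's own route: exponential Chebyshev/Markov at \( \mu = \mud \), the exponential-moment bound of Lemma~\ref{Lexpxig} with the matching truncation level \( \yyd = \gm/\mud - \sqrt{\dimA/\mud} \), and the identity \( \log\det(\Id_{\dimp} - \mud \BB^{2}) = \sum_{i}\log(1-\mud a_{i}) \) together with \( \dimA = \sum_{i} a_{i} \) for the diagonal specialization. No gaps.
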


\begin{proof}
The exponential Chebyshev inequality and \eqref{Eexp2xig} imply
\begin{EQA}
    && \nquad
    \P\bigl( \| \BB \xiv \|^{2} > \dimA + u, \| \BB^{2} \xiv \| \le \yyd \bigr)
    \\
    & \le &
    \exp\Bigl\{ - \frac{\mud (\dimA + u)}{2} \Bigr\} 
    \E\exp \Bigl( \frac{\mud \| \BB \xiv \|^{2}}{2} \Bigr)
    \Ind\bigl( \| \BB^{2} \xiv \| \le \gm/\mud - \sqrt{\dimA/\mud} \bigr)
    \\
    & \le &
    2 \exp \bigl\{ - 0.5\mud (\dimA + u) - 0.5 \log \det (\Id_{\dimp} - \mud \BB^{2}) \bigr\}.
\label{logPmunug}
\end{EQA}
Moreover, the standard change-of-basis arguments allow us to reduce the problem to the case of a 
diagonal matrix \( \BB^{2} = \diag\bigl( a_{1},\ldots,a_{\dimp} \bigr) \) where 
\( 1 = a_{1} \ge a_{2} \ge \ldots\ge a_{\dimp} > 0 \).
Note that \( \dimA = a_{1} + \ldots + a_{\dimp} \).
Then the claim \eqref{exptrxiq} can be written in the form \eqref{exptrxiqq}.
\end{proof}

Now we evaluate a large deviation probability that 
\( \| \BB \xiv \| > \yy \) for a large \( \yy \).
Note that the condition \( \| \BB^{2} \|_{\infty} \le 1 \) implies 
\( \| \BB^{2} \xiv \| \le \| \BB \xiv \| \).
So, the bound \eqref{exptrxiq} continues to hold when \( \| \BB^{2} \xiv \| \le \yyd \) is 
replaced by \( \| \BB \xiv \| \le \yyd \).
\begin{lemma}
\label{Lexpxi2g}
Let \( \mud < 1 \) and \( \mud \dimA < \gm^{2} \). 
Define \( \gmd \) by \( \gmd = \gm - \sqrt{\mud \dimA} \).
For any \( \yy \ge \yyd \eqdef \gmd/\mud \), it holds 
\begin{EQA}
    \P\bigl( \| \BB \xiv \| > \yy \bigr)
    & \le &
    8.4 \det\{ \Id_{\dimp} - (\gmd/\yy) \BB^{2} \}^{-1/2}
    \exp\bigl( - \gmd \yy/2 \bigr).
    \\
    & \le &
    8.4 \exp\bigl( - \xxd - \gmd (\yy - \yyd)/2 \bigr) ,
\label{Pexp2xig}
\end{EQA}    
where \( \xxd \) is defined by 
\begin{EQA}
    2 \xxd
    & = &
    \gmd \yyd + \log \det\{ \Id_{\dimp} - (\gmd/\yyd) \BB^{2} \} .
\label{xxdyydA}
\end{EQA}
\end{lemma}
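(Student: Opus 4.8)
The plan is to reproduce, in the matrix setting, the slicing argument already used in the proof of Lemma~\ref{Lexpxi2}, with the only structural change being that the scalar exponential-moment factor \( (1-\mu)^{-\dimp/2} \) is replaced by the determinant \( \det(\Id_{\dimp} - \mu \BB^{2})^{-1/2} \) supplied by Lemma~\ref{Lexpxig}. Concretely, I would introduce the growing sequence \( \yy_{k} \) with \( \yy_{1} = \yy \) and \( \gmd \yy_{k+1} = \gmd \yy + k \), and set \( \mu_{k} = \gmd/\yy_{k} \). Since \( \yy \ge \yyd = \gmd/\mud \), the base value satisfies \( \mu_{1} = \gmd/\yy \le \mud < 1 \), and as \( \yy_{k} \) increases we get \( \mu_{k} \le \mu_{1} \le \mud \) for all \( k \). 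I then decompose \( \P\bigl( \| \BB \xiv \| > \yy \bigr) = \sum_{k \ge 1} \P\bigl( \| \BB \xiv \| > \yy_{k}, \, \| \BB \xiv \| \le \yy_{k+1} \bigr) \) and bound each slice.

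For a single slice I would apply the exponential Chebyshev inequality with parameter \( \mu_{k+1} \), giving the factor \( \exp(-\mu_{k+1} \yy_{k}^{2}/2) \) times the truncated exponential moment \( \E \exp(\mu_{k+1} \| \BB \xiv \|^{2}/2) \Ind(\| \BB \xiv \| \le \yy_{k+1}) \). To invoke \eqref{Eexp2xig} here I use the observation recorded just before the lemma that \( \| \BB^{2} \|_{\infty} \le 1 \) forces \( \| \BB^{2} \xiv \| \le \| \BB \xiv \| \le \yy_{k+1} \); and the required truncation threshold is met because
\begin{EQA}[c]
    \gm/\mu_{k+1} - \sqrt{\dimA/\mu_{k+1}} - \yy_{k+1}
    =
    \mu_{k+1}^{-1}\bigl( \sqrt{\mud \dimA} - \sqrt{\mu_{k+1} \dimA} \bigr)
    \ge 0,
\nonumber
\end{EQA}
using \( \mu_{k+1} \yy_{k+1} = \gmd \), \( \gm - \gmd = \sqrt{\mud \dimA} \), and \( \mu_{k+1} \le \mud \). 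This yields the per-slice bound \( 2 \det(\Id_{\dimp} - \mu_{k+1} \BB^{2})^{-1/2} \exp(-\mu_{k+1} \yy_{k}^{2}/2) \). The same elementary algebra as in Lemma~\ref{Lexpxi2} gives \( \mu_{k+1} \yy_{k}^{2} = (\gmd \yy + k - 1)^{2}/(\gmd \yy + k) \ge \gmd \yy + k - 2 \).

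Next I would exploit monotonicity of the determinant factor: since \( \BB^{2} \) is positive semidefinite with eigenvalues in \( [0,1] \) and \( \mu_{k+1} \le \mu_{1} < 1 \), each \( (1 - \mu a_{i})^{-1/2} \) is nondecreasing in \( \mu \), so \( \det(\Id_{\dimp} - \mu_{k+1} \BB^{2})^{-1/2} \le \det(\Id_{\dimp} - \mu_{1} \BB^{2})^{-1/2} \). Pulling this factor out and summing the geometric series \( \sum_{k \ge 1} \exp\{-(\gmd \yy + k - 2)/2\} = e^{1/2}(1 - e^{-1/2})^{-1} \exp(-\gmd \yy/2) \), together with \( 2 e^{1/2}(1-e^{-1/2})^{-1} \le 8.4 \), produces the first stated inequality with \( \mu_{1} = \gmd/\yy \). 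For the second inequality I would set \( f(\yy) = \gmd \yy/2 + \tfrac{1}{2} \log\det\{ \Id_{\dimp} - (\gmd/\yy) \BB^{2} \} \), check that \( f(\yyd) = \xxd \) via \( \gmd/\yyd = \mud \) and \eqref{xxdyydA}, and differentiate: \( f'(\yy) = \gmd/2 + \tfrac{1}{2} \sum_{i} (\gmd/\yy^{2}) a_{i} / (1 - (\gmd/\yy) a_{i}) \ge \gmd/2 \) since every summand is nonnegative; integrating from \( \yyd \) gives \( f(\yy) \ge \xxd + \gmd(\yy - \yyd)/2 \). I expect the only genuinely delicate point to be the bookkeeping around the truncation—making sure the slice indicator \( \| \BB \xiv \| \le \yy_{k+1} \) legitimately dominates the indicator \( \| \BB^{2} \xiv \| \le \gm/\mu_{k+1} - \sqrt{\dimA/\mu_{k+1}} \) required by Lemma~\ref{Lexpxig}—while the determinant monotonicity and geometric summation are routine.
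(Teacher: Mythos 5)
Your proposal is correct and follows essentially the same route as the paper: the paper's proof explicitly states that the slicing argument of Lemma~\ref{Lexpxi2} carries over with \( \| \xiv \| \) replaced by \( \| \BB \xiv \| \) and \( (1-\mu_{1})^{-\dimp/2} \) replaced by \( \det\{\Id_{\dimp} - (\gmd/\yy)\BB^{2}\}^{-1/2} \), and then derives the second inequality via the same function \( f \) and the bound \( f'(\yy) \ge \gmd/2 \). The details you supply — the truncation check using \( \| \BB^{2} \xiv \| \le \| \BB \xiv \| \), the monotonicity of the determinant factor in \( \mu \), and the geometric summation — are exactly the omitted steps, and they are carried out correctly.
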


\begin{proof}
The slicing arguments of Lemma~\ref{Lexpxi2} apply here in the same manner.
One has to replace \( \| \xiv \| \) by \( \| \BB \xiv \| \) and 
\( (1 - \mu_{1})^{-\dimp/2} \) by \( \det \{ \Id_{\dimp} - (\gmd/\yy) \BB^{2} \}^{-1/2} \).
We omit the details.
In particular, with \( \yy = \yyd = \gmd/\mu \), this yields
\begin{EQA}
    \P\bigl( \| \BB \xiv \| > \yyd \bigr)
    & \le &
    8.4 \exp (- \xxd).
\label{Pexp2xigmmc}
\end{EQA}
Moreover, for the function 
\( f(\yy) = \gmd \yy + \log \det \{ \Id_{\dimp} - (\gmd/\yy) \BB^{2} \} \), it holds
\( f'(\yy) \ge \gmd \) and hence,
\( f(\yy) \ge f(\yyd) + \gmd (\yy - \yyd) \) for \( \yy > \yyd \).
This implies \eqref{Pexp2xig}.
\end{proof}

One important feature of the results of Lemma~\ref{Lexplotr} and Lemma~\ref{Lexpxi2g} is 
that the value \( \mud < 1 \wedge (\gm^{2}/\dimA) \) can be selected arbitrarily.
In particular, for \( \yy \ge \yyc \), Lemma~\ref{Lexpxi2g} with \( \mud = \muc \) 
yields the large deviation probability \( \P\bigl( \| \BB \xiv \| > \yy \bigr) \).
For bounding the probability 
\( \P\bigl( \| \BB \xiv \|^{2} > \dimA + u, \, \| \BB \xiv \| \le \yyc \bigr) \),
we use the inequality \( \log(1-t) \ge - t - t^{2} \)
for \( t \le 2/3 \).
It implies  for \( \mu \le 2/3 \) that
\begin{EQA}
    && \nquad
    - \log \P\bigl( \| \BB \xiv \|^{2} > \dimA + u, \| \BB \xiv \| \le \yyc \bigr)
    \\
    & \ge &
    \mu (\dimA + u)
    + \sum_{i=1}^{\dimp} \log\bigl( 1 - \mu a_{i} \bigr) 
    \\
    & \ge &
    \mu (\dimA + u) - \sum_{i=1}^{\dimp} (\mu a_{i} + \mu^{2} a_{i}^{2}) 
    \ge  
    \mu u - \mu^{2} \vA^{2}/2 .
\label{sumI23A}
\end{EQA}
Now we distinguish between \( \muc = 2/3 \) and \( \muc < 2/3 \) starting with 
\( \muc = 2/3 \).
The bound \eqref{sumI23A} with
\( \mu = 2/3 \) and 
with \( u = (2 \vA \xx^{1/2}) \vee (6 \xx) \) yields 
\begin{EQA}
    \P\bigl( \| \BB \xiv \|^{2} > \dimA + u, \, \| \BB \xiv \| \le \yyc \bigr)
    & \le &
    2 \exp (- \xx) ;
\label{AxivudA}
\end{EQA}
see the proof of Theorem~\ref{TexpbLGA} for the Gaussian case.

Now consider \( \muc < 2/3 \).
For \( \xx^{1/2} \le \muc \vA / 2 \), use \( u = 2 \vA \xx^{1/2} \) and 
\( \mud = u / \vA^{2} \). 
It holds \( \mud = u / \vA^{2} \le \muc \) and \( u^{2}/(4 \vA^{2}) = \xx \) yielding the 
desired bound by \eqref{sumI23A}. 
For \( \xx^{1/2} > \muc \vA / 2 \), we select again \( \mud = \muc \).
It holds with \( u = 4 \muc^{-1} \xx \) that 
\( \muc u / 2 - \muc^{2} \vA^{2}/4 \ge 2\xx - \xx = \xx \). 
This completes the proof. 

\section{Proof of Theorem~\ref{TxivqLDrg}}
The arguments behind the result are the same as in the one-norm case of 
Theorem~\ref{TxivqLD}.
We only outline the main steps.

\begin{lemma}
\label{Lexpxigm} 
Suppose \eqref{expgamgmmn} and \eqref{epsuvgmr}.
For any \( \mu < 1 \) with \( \gms > \mu^{1/2} \rs \), it holds 
\begin{EQA}
\label{Eexp2xigm}
    \E \exp\bigl( \mu \| \xiv \|^{2} / 2 \bigr)
        \Ind\bigl( \| \xiv \|_{\norms} \le \gms/\mu - \rs/\mu^{1/2} \bigr)
    & \le &
    2 (1 - \mu)^{-\dimp/2} .
\end{EQA}    
\end{lemma}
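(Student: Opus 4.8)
The plan is to reproduce the argument of Lemma~\ref{Lexpxiv} almost verbatim, with the Euclidean ball \( \{ \|\gammav\| \le \gm \} \) replaced by the norm ball \( \{ \|\gammav\|_{\norms} \le \gms \} \) in the exponential moment condition \eqref{expgamgmmn}, and with the elementary median bound \( \P(\|\varepsilonv\|^{2} > \dimp) \le 1/2 \) replaced by the assumed concentration \eqref{epsuvgmr} in its shifted form \eqref{epsuvgmrr}. The computational heart is the same Gaussian integral identity: completing the square and substituting \( \varepsilonv = \mu^{-1/2}\gammav - \mu^{1/2}\xiv \) (so that \( \|\gammav\|_{\norms} = \mu^{1/2}\|\varepsilonv + \mu^{1/2}\xiv\|_{\norms} \)) gives, for fixed \( \xiv \) and \( \mu < 1 \), with \( c_{\dimp} = (2\pi)^{-\dimp/2} \),
\[
    c_{\dimp} \int \exp\Bigl( \gammav^{\T}\xiv - \frac{\|\gammav\|^{2}}{2\mu} \Bigr) \Ind(\|\gammav\|_{\norms} \le \gms)\, d\gammav
    = \mu^{\dimp/2} \exp\bigl( \mu \|\xiv\|^{2}/2 \bigr)\, \P_{\xiv}\bigl( \|\varepsilonv + \mu^{1/2}\xiv\|_{\norms} \le \mu^{-1/2}\gms \bigr),
\]
where \( \varepsilonv \sim \ND(0,\Id_{\dimp}) \) and \( \P_{\xiv} \) is the conditional probability given \( \xiv \).

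The key new step is to show that the conditional probability on the right is at least \( 1/2 \) for every \( \xiv \) in the truncation region of \eqref{Eexp2xigm}. I would apply \eqref{epsuvgmrr} with \( \uv = -\mu^{1/2}\xiv \) and \( \uus = \mu^{1/2}\|\xiv\|_{\norms} \), which yields \( \P_{\xiv}( \|\varepsilonv + \mu^{1/2}\xiv\|_{\norms} \le \rs + \mu^{1/2}\|\xiv\|_{\norms} ) \ge 1/2 \). Monotonicity then gives the desired bound provided \( \mu^{-1/2}\gms \ge \rs + \mu^{1/2}\|\xiv\|_{\norms} \), and this inequality is exactly equivalent to the indicator constraint \( \|\xiv\|_{\norms} \le \gms/\mu - \rs/\mu^{1/2} \). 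Hence on the truncation region the left-hand integral is at least \( 0.5\,\mu^{\dimp/2}\exp(\mu\|\xiv\|^{2}/2) \), which rearranges into the pointwise bound \( \exp(\mu\|\xiv\|^{2}/2)\,\Ind(\|\xiv\|_{\norms} \le \gms/\mu - \rs/\mu^{1/2}) \le 2\mu^{-\dimp/2} c_{\dimp} \int \exp(\gammav^{\T}\xiv - \|\gammav\|^{2}/(2\mu))\,\Ind(\|\gammav\|_{\norms} \le \gms)\, d\gammav \).

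Finally I would take expectations, interchange \( \E \) with the \( \gammav \)-integral by Fubini, and use \eqref{expgamgmmn} (legitimate because integration is confined to \( \|\gammav\|_{\norms} \le \gms \)) to replace \( \E\exp(\gammav^{\T}\xiv) \) by \( \exp(\|\gammav\|^{2}/2) \). Dropping the indicator and evaluating the resulting unconstrained Gaussian integral \( c_{\dimp}\int \exp(-(\mu^{-1}-1)\|\gammav\|^{2}/2)\, d\gammav = (\mu^{-1}-1)^{-\dimp/2} \), then simplifying \( \mu^{-\dimp/2}(\mu^{-1}-1)^{-\dimp/2} = (1-\mu)^{-\dimp/2} \), delivers the claimed \( 2(1-\mu)^{-\dimp/2} \). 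The only genuinely new bookkeeping --- and the step that requires care --- is the translation of the norm constraint under the change of variables so that \eqref{epsuvgmrr} applies; I note that the hypothesis \( \gms > \mu^{1/2}\rs \) is precisely what makes the truncation region \( \{\|\xiv\|_{\norms} \le \gms/\mu - \rs/\mu^{1/2}\} \) nonempty, so the statement is not vacuous.
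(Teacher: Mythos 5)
Your argument is correct and coincides with the paper's own proof of Lemma~\ref{Lexpxigm}: the same Gaussian change of variables, the same use of \eqref{epsuvgmrr} to bound the conditional probability by \( 1/2 \) on the truncation region, and the same Fubini step combined with \eqref{expgamgmmn} and the unconstrained Gaussian integral. The translation of the indicator constraint into the condition \( \mu^{-1/2}\gms \ge \rs + \mu^{1/2}\|\xiv\|_{\norms} \) is exactly the step the paper performs as well.
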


\begin{proof}
Let \( \varepsilonv \) be a standard normal vector in \( \R^{\dimp} \) and 
\( \uv \in \R^{\dimp} \). 
Let us fix some \( \xiv \) with  \( \mu^{1/2} \| \xiv \|_{\norms} \le \mu^{-1/2} \gms - \rs \) 
and denote by \( \P_{\xiv} \) the conditional probability given \( \xiv \).
It holds by \eqref{epsuvgmr} with \( c_{p} = (2\pi)^{-\dimp/2} \) 
\begin{EQA}
    && \nquad
    c_{p} \int \exp\Bigl( \gammav^{\T} \xiv - \frac{1}{2 \mu} \| \gammav \|^{2} \Bigr)
        \Ind(\| \gammav \|_{\norms} \le \gms) d\gammav
    \\
    &=&
    c_{p} \exp\bigl( \mu \| \xiv \|^{2} / 2 \bigr)
    \int \exp\Bigl( 
        - \frac{1}{2}  \bigl\| \mu^{1/2} \xiv - \mu^{-1/2} \gammav \bigr\|^{2} 
    \Bigr) \Ind( \| \mu^{-1/2} \gammav \|_{\norms} \le \mu^{-1/2} \gms) d\gammav    
    \\
    & = &
    \mu^{\dimp/2} \exp\bigl( \mu \| \xiv \|^{2} / 2 \bigr) 
    \P_{\xiv}\bigl( \| \varepsilonv - \mu^{1/2} \xiv \|_{\norms} \le \mu^{-1/2} \gms \bigr)
    \\
    & \ge &
    0.5 \mu^{\dimp/2} \exp\bigl( \mu \| \xiv \|^{2} / 2 \bigr) .
\label{intggvvgm}
\end{EQA}    
This implies 
\begin{EQA}
    && \nquad
    \exp\Bigl( \frac{\mu \| \xiv \|^{2}}{2} \Bigr) 
        \Ind\bigl( \| \xiv \|_{\norms} \le \gms/\mu - \rs/\mu^{1/2} \bigr)
    \\
    & \le &
    2 \mu^{-\dimp/2} c_{p}
    \int \exp\Bigl( \gammav^{\T} \xiv - \frac{1}{2\mu} \| \gammav \|^{2} \Bigr)
        \Ind(\| \gammav \|_{\norms} \le \gms) d\gammav .
\label{expxiv1cpgm}
\end{EQA}    
Further, by \eqref{expgamgmmn}
\begin{EQA}
    && 
    \nquad
    c_{p} \E \int \exp\Bigl( \gammav^{\T} \xiv - \frac{1}{2\mu} \| \gammav \|^{2} \Bigr)
        \Ind(\| \gammav \|_{\norms} \le \gms) d\gammav
    \\
    & \le &
    c_{p} \int \exp\Bigl( - \frac{\mu^{-1} - 1}{2} \| \gammav \|^{2} \Bigr) d \gammav
    \le 
    (\mu^{-1} - 1)^{- \dimp/2}
\label{nununugm}
\end{EQA}    
and \eqref{Eexp2xigm} follows.
\end{proof}

As in the Gaussian case, \eqref{Eexp2xigm} implies for \( \zz > \dimp \) with
\( \mu = \mu(\zz) = (\zz - \dimp)/\zz \) the bounds \eqref{logPmunuunc} and 
\eqref{expxibon}. 
Note that the value \( \mu(\zz) \) clearly grows with \( \zz \) from zero to one, while  
\( \gms/\mu(\zz) - \rs/\mu^{1/2}(\zz) \) is strictly decreasing.
The value \( \zzs \) is defined exactly as the point where 
\( \gms/\mu(\zz) - \rs/\mu^{1/2}(\zz) \) crosses \( \uus \), so that 
\( \gms/\mu(\zz) - \rs/\mu^{1/2}(\zz) \ge \uus \) for \( \zz \le \zzs \).

For \( \zz > \zzs \), the choice \( \mu = \mu(\yy) \) conflicts with 
\( \gms/\mu(\zz) - \rs/\mu^{1/2}(\zz) \ge \uus \). 
So, we apply \( \mu = \mus \) yielding by the Markov inequality
\begin{EQA}[c]
    \P\bigl( \| \xiv \|^{2} > \zz, \, \| \xiv \|_{\norms} \le \uus \bigr)
    \le 
    2 \exp\bigl\{ - \mus \zz/2 - (\dimp/2) \log(1 - \mus) \bigr\} ,
\label{Pxivsyyps}
\end{EQA}    
and the assertion follows.

\section{Proof of Theorem~\ref{TxivqLDrgPi}}
Arguments from the proof of Lemmas~\ref{Lexpxig} and \ref{Lexpxigm} yield in view of 
\( \gms \mus^{-1} - \rs \mus^{-1/2} \ge \uus \)
\begin{EQA}
\label{Eexp2xigPi}
    && \nquad
    \E \exp\bigl\{ \mus \| \Pi \xiv \|^{2}/2 \Bigr\}
        \Ind\bigl( \| \Pi^{2} \xiv \|_{\norms} \le \uus \bigr)
    \\
    & \le &        
    \E \exp\bigl( {\mus \| \Pi \xiv \|^{2}}/{2} \bigr)
        \Ind\bigl( \| \Pi^{2} \xiv \|_{\norms} \le \gms / \mus - \dimA/\mus^{1/2} \bigr)
    \\
    & \le &
    2 {\det(\Id_{\dimp} - \mus \Pi^{2})^{-1/2}} .
\end{EQA}
Now the inequality \( \log(1-t) \ge - t - t^{2} \)
for \( t \le 2/3 \) implies
\begin{EQA}[c]
    - \log \det(\Id_{\dimp} - \mus \Pi^{2}) 
    \le 
    \mus \dimA + \mus^{2} \vA^{2}/2
\label{EexpmusvAte}
\end{EQA}    
cf. \eqref{sumI23A}; the assertion \eqref{EexpmusvA} follows.

\bibliography{exp_ts,listpubm-with-url}
\end{document}